\tikzset{snake it/.style={decorate, decoration=snake}}
\tikzset{snake it/.style={decorate, decoration=snake}}
\newtheorem{theorem}{Theorem}[section]
\newtheorem{lemma}[theorem]{Lemma}
\newtheorem{proposition}{Proposition}[section]
\theoremstyle{definition}
\newtheorem{definition}[theorem]{Definition}
\newtheorem{corollary}[theorem]{Corollary}
\newtheorem{question}[theorem]{Question}
\newtheorem{example}[theorem]{Example}
\theoremstyle{remark}
\newtheorem{remark}[theorem]{Remark}
\numberwithin{equation}{section}
\NewDocumentCommand{\tens}{t_}
 {%
  \IfBooleanTF{#1}
   {\tensop}
   {\otimes}%
 }
\NewDocumentCommand{\tensop}{m}
 {%
  \mathbin{\mathop{\otimes}\displaylimits_{#1}}%
 }
\NewDocumentCommand{\tensw}{t_}
 {%
  \IfBooleanTF{#1}
   {\tensopw}
   {\wedge^{S^3}}%
 }
\NewDocumentCommand{\tensopw}{m}
 {%
  \mathbin{\mathop{\wedge^{S^3}}\displaylimits_{#1}}%
 }
\begin{document}

\title[Partitions of the complete hypergraph $K_6^3$ and a determinant like function]{Partitions of the complete hypergraph $K_6^3$\\ and a determinant like function}
\author{Steven R. Lippold}
\address{Department of Mathematics and Statistics, Bowling Green State University, Bowling Green, OH 43403}

\email{steverl@bgsu.edu}


\author{Mihai D. Staic}
\address{Department of Mathematics and Statistics, Bowling Green State University, Bowling Green, OH 43403 }
\address{Institute of Mathematics of the Romanian Academy, PO.BOX 1-764, RO-70700 Bu\-cha\-rest, Romania.}

\email{mstaic@bgsu.edu}


\subjclass[2020]{Primary  15A15, Secondary  05C65, 05C70 }

\keywords{exterior algebra, partitions of hypergraphs}

\begin{abstract} In this paper we introduce  a determinant-like map $det^{S^3}$ and study some of its properties. For this we define a graded vector space  $\Lambda^{S^3}_V$ that has similar properties with the exterior algebra $\Lambda_V$ and the exterior GSC-operad $\Lambda^{S^2}_V$ from \cite{sta2}. When $dim(V_2)=2$ we show that $dim_k(\Lambda^{S^3}_{V_2}[6])=1$ which gives the existence  and uniqueness of $det^{S^3}$. We also give an explicit formula for $det^{S^3}$ as a sum over certain $2$-partitions of the complete hypergraph $K_6^3$.   
\end{abstract}
\maketitle



%

\section{Introduction}

Exterior algebra $\Lambda_V$ is the quotient of the tensor algebra $T(V)$ by the ideal generated by all elements $u\otimes u$ where $u\in V$. It is well know that if $dim(V_d)=d$ then $dim_k(\Lambda_{V_d}[d])=1$, and that one gets the determinant of a linear transformation $T:V\to V$ as the unique constant $det(T)=\Lambda(T):\Lambda_{V_d}[d]\to \Lambda_{V_d}[d]$. Equivalently, the determinant  map is the unique (up to a constant) nontrivial linear function $det:V_d^{\otimes d}\to k$ with the property $det(\otimes_{1\leq i\leq d}(v_i))=0$ if there exists $1\leq x<y\leq d$ such that $v_x=v_y$. This gives the well known formula  $$det(\otimes_{1\leq i\leq d}(v_{i}))=\sum_{\sigma\in S_d}\varepsilon(\sigma)v_{1}^{\sigma(1)}\dots v_{d}^{\sigma(d)},$$ where $v_i=(v_i^1,\dots,v_i^{d})$. 

The exterior Graded-Swiss-Cheese (GSC) operad  ${\Lambda}^{S^2}_V$ was introduced in \cite{sta2} as a quotient of the tensor GSC-operad $\mathcal{T}_{V}^{S^2}$ by the ideal $\mathcal{E}_V^{S^2}$ generated by elements of the form 
$\begin{pmatrix}
1&u&u\\
&1&u\\
\otimes &&1\\
\end{pmatrix}$ where $u\in V$.  

It was proved in \cite{sta2} that $dim_k({\Lambda}^{S^2}_{V_2}[4])=1$, and in \cite{edge} that $dim_k({\Lambda}^{S^2}_{V_3}[6])=1$. In particular, if $d=2$ (or $d=3$)  we have a determinant like function $det^{S^2}:V_d^{\otimes d(2d-1)}\to k$ that is unique with the property that $det^{S^2}(\otimes_{1\leq i<j\leq 2d} (v_{i,j}))=0$ if there exists $1\leq x<y<z\leq 2d$ such that $v_{x,y}=v_{x,z}=v_{y,z}$. Moreover, when $d=2$ it was shown in \cite{edge} that $det^{S^2}:V_2^{\otimes 6}\to k$ can be written as  
$$det^{S^2}(\otimes_{1\leq i<j\leq 4}(v_{i,j}))=\sum_{(\Gamma_1,\Gamma_2)\in \mathcal{P}^{h,cf}_2(K_{4})} \varepsilon_2^{S^2}((\Gamma_1,\Gamma_2))M_{(\Gamma_1,\Gamma_2)}(\otimes_{1\leq i<j\leq 4}(v_{i,j}))$$ 
where $\mathcal{P}^{h,cf}_2(K_{4})$ is the set of all homogeneous cycle-free $2$-partitions of the complete graph $K_4$, 
$\varepsilon_2^{S^2}$ is a function defined on the set $\mathcal{P}^{h,cf}_2(K_{4})$ (similar with the signature map for  permutations), and  $M_{(\Gamma_1,\Gamma_2)}(\otimes_{1\leq i<j\leq 4}(v_{i,j}))$ is a monomial associated to  $\otimes_{1\leq i<j\leq 4}(v_{i,j})\in V_2^{\otimes 6}$. A similar formula also works for the case $d=3$.  It was conjectured in \cite{sta2} that a function $det^{S^2}:V_d^{\otimes d(2d-1)}\to k$ with the above universality property  exists and is unique for any $d$.  One can notice similarities with the exterior algebra construction and the determinant function.  


In this paper we consider a similar construction associated to the sphere $S^3$. The main result is the existence and uniqueness of a determinant like function $det^{S^3}:V_2^{\otimes 20}\to k$ with the property that $det^{S^3}(\otimes_{1\leq i<j<k\leq 6}(v_{i,j,k}))=0$ if there exists $1\leq x<y<z<t\leq 6$ such that $v_{x,y,z}=v_{x,y,t}=v_{x,z,t}=v_{y,z,t}$. We prove that $det^{S^3}$ is invariant under the actions of $SL_2(k)$ and of the symmetric group $S_6$. Moreover the  $det^{S^3}$ can be written as 
$$det^{S^3}(\otimes_{1\leq i<j<k\leq6}(v_{i,j,k}))=\sum_{(\mathcal{H}_1,\mathcal{H}_2)\in \mathcal{P}_2^{h,nt}(K_{6}^3)}\varepsilon^{S^3}(\mathcal{H}_1,\mathcal{H}_2)M^{S^3}_{(\mathcal{H}_1,\mathcal{H}_2)}(\otimes_{1\leq i<j<k\leq6}(v_{i,j,k})),$$
where $\mathcal{P}_2^{h,nt}(K_{6}^3)$ is a  set of homogeneous nontrivial $2$-partitions of the complete hypergraph $K_6^3$,  $\varepsilon^{S^3}$ is a function on $\mathcal{P}_2^{h,nt}(K_{6}^3)$, and $M^{S^3}_{(\mathcal{H}_1,\mathcal{H}_2)}(\otimes_{1\leq i<j<k\leq6}(v_{i,j,k}))$ is a monomial associated to $\otimes_{1\leq i<j<k\leq6}(v_{i,j,k})\in V_2^{\otimes 20}$. 

The paper is organized as follows: in section 2 we recall a few results about $\Lambda^{S^2}_V$, and some definitions and examples of hypergraphs. In section 3 we introduce $\Lambda^{S^3}_V=\oplus_{n\geq 0}\Lambda^{S^3}_V[n]$ as the quotient of $\mathcal{T}_{V}^{S^3}$ by a certain subspace generated by elements similar to $\begin{pmatrix}
 u&\otimes 	 \\
 u& u\\
      & u\\
\end{pmatrix}$ for all $u\in V$.  We also give a presentation with generators and relations using a connection with partitions of the complete hypergraph $K_n^3$. 

Section 4  deals with the case $dim(V_2)=2$. We prove the main result of this paper, namely  the existence and uniqueness of the $det^{S^3}$ map. We study some properties of the map $det^{S^3}$ and compute $dim_k(\Lambda^{S^3}_{V_2}[n])$ for all $n\geq 0$.  In section 5 we discuss  a few related  problems and possible directions of research. 

Using the fact that $det^{S^3}$ is invariant under the action of $SL_2(k)$, in Appendix we give an explicit presentation for $det^{S^3}$ as a sum of  products of determinants of $2\times 2$ matrices. We also  present a classification of a certain class of homogeneous $2$-partitions of complete hypergraph $K_6^3$ under the action of the group $S_6\times S_2$ (this was obtained using MATLAB).

\section{Preliminary}

\subsection{Generalizations of the Exterior Algebra}

In this paper $k$ is an infinite field such that $char(k)\neq2$ and $char(k)\neq3$. The tensor product $\otimes$ is over the field $k$. For a vector space $V_d$ of dimension $d$  we fix a basis  $\mathcal{B}_d=\{e_1,\dots,e_d\}$. 

As a motivation for our construction we recall a few results about the determinant and the $det^{S^2}$ map. First notice that a permutation  in  $S_d$ is nothing else but an ordered
$d$-partition of the set $\{1,2,\dots, d\}$, and so the usual formula for the determinant of a matrix $A=[v_1,\dots, v_d]=[v_i^j]_{1\leq i,j\leq d}$ can be rewritten as
$$det(A)=\sum_{\sigma\in S_d}\varepsilon(\sigma)v_{1}^{\sigma(1)}v_2^{\sigma(2)}\dots v_{n}^{\sigma(n)}=\sum_{\pi\in \mathcal{P}^{h}_d(\{1,\dots, d\})}\varepsilon(\pi)M_{\pi}(v_1,\dots, v_n),$$ where $\mathcal{P}^{h}_d(\{1,\dots, d\})$ is the set of homogeneous ordered $d$-partitions of the set $\{1,\dots,d\}$, $\pi=(\{\sigma(1)\},\{\sigma(2)\}\dots,\{\sigma(d)\})$ is the $d$-partition corresponding to the permutation $\sigma$, and $M_{\pi}(v_1,\dots, v_n)$ is the corresponding monomial expression associated to $A$ and $\sigma$. We will see later in the paper how this expression of the determinant fits into more general settings.

Next recall from  \cite{edge} the definition of $\Lambda^{S^2}_V$. 
For every $n\geq0$ we denote
\[\mathcal{T}_V^{S^2}[n]:=V^{\otimes\frac{n(n-1)}{2}}.\]
 A simple tensor in $T^{S^2}_V[n]$ will be denoted by $\otimes_{1\leq i<j\leq n}(v_{i,j})$ where $v_{i,j}\in V$. 
A general element in $T^{S^2}_V[n]$ is a sum of simple tensors.
\begin{remark}
The grading that we use in this paper is different then the one from \cite{sta2}, more precisely the relation between the two gradings is  $$\mathcal{T}_V^{S^2}[n]=\mathcal{T}_V^{S^2}(n+1)=V^{\otimes\frac{n(n-1)}{2}}.$$ 
This convention is more intuitive and is consistent with the usual grading on the exterior algebra $\Lambda_V$.  
\end{remark} 

We consider $\mathcal{E}_V^{S^2}[n]$ the subspace of $V^{\otimes\frac{n(n-1)}{2}}$ that is linear generated by all the simple tensor $\otimes_{1\leq i<j\leq n}(v_{i,j})\in \mathcal{T}_V^{S^2}[n]$, with the property that there exist $1\leq x<y<z\leq n$ such that $u_{x,y}=u_{x,z}=u_{y,z}$. 
\begin{definition} \label{s2}
 Let $V$ be a $k$ vector space. We define $\Lambda^{S^2}_V$ as the graded vector space with the component in degree $n$ defined as the quotient vector space
 \[\Lambda^{S^2}_V[n]=\frac{\mathcal{T}_V^{S^2}[n]}{\mathcal{E}_V^{S^2}[n]}\]
 for every $n\geq0$.
\end{definition}
We recall a few results about  $\Lambda^{S^2}_V$. 
\begin{proposition}[\cite{sta2}, \cite{edge}]
   (i)  Let $dim(V_d)=d$. Then $dim\left(\Lambda_{V_d}^{S^2}[n]\right)=0$ for $n>2d$.\\
	(ii) If $dim(V_2)=2$ then $dim\left(\Lambda^{S^2}_{V_2}[4]\right)=1$.\\
	(iii) If $dim(V_3)=3$ then $dim\left(\Lambda^{S^2}_{V_3}[6]\right)=1$.
	\label{prop1}
\end{proposition}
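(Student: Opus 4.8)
The plan is to deduce (ii) and (iii) from (i) together with one explicit construction, so I would prove (i) first.

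\textbf{Part (i).} I would start from a stabilization step: $\Lambda^{S^2}_V[n]=0$ implies $\Lambda^{S^2}_V[n+1]=0$. Indeed, for fixed $w_1,\dots,w_n\in V$ the linear map $\mathcal{T}_V^{S^2}[n]\to\mathcal{T}_V^{S^2}[n+1]$ that attaches $w_i$ to the new edge $\{i,n+1\}$ and leaves every other edge unchanged carries $\mathcal{E}_V^{S^2}[n]$ into $\mathcal{E}_V^{S^2}[n+1]$ (a monochromatic triangle inside $\{1,\dots,n\}$ stays monochromatic), and as $w_1,\dots,w_n$ and the degree-$n$ simple tensor vary, these maps produce every simple tensor of degree $n+1$. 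Hence $\mathcal{E}_V^{S^2}[n]=\mathcal{T}_V^{S^2}[n]$ forces $\mathcal{E}_V^{S^2}[n+1]=\mathcal{T}_V^{S^2}[n+1]$, and it suffices to prove $\Lambda^{S^2}_{V_d}[2d+1]=0$. For this I would polarize the defining relation: since $char(k)\neq2,3$, expanding $\otimes(v_{i,j})=0$ — valid whenever the three edges of some triangle $\{x,y,z\}$ coincide — at the vectors $v_{x,y},v_{x,z},v_{y,z}$ and applying the usual inclusion–exclusion shows that, for every triangle, the full symmetrization $\sum_{\tau\in S_3}$ of a simple tensor over the three edges of that triangle lies in $\mathcal{E}_{V_d}^{S^2}$. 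One then straightens an arbitrary simple tensor with these relations: normalizing the $2d$ edges incident to a fixed vertex, which are $2d$ vectors in the $d$-dimensional space $V_d$ and hence linearly dependent, rewrites the tensor modulo $\mathcal{E}_{V_d}^{S^2}$ as a combination of simple tensors each containing a monochromatic triangle, so $\Lambda^{S^2}_{V_d}[2d+1]=0$. This straightening is the substantive part and is carried out in \cite{sta2}.

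\textbf{Parts (ii) and (iii).} In each case $n=2d$ is the top nonzero degree by part (i), so only the equality $dim\,\Lambda^{S^2}_{V_d}[2d]=1$ remains. For the bound $dim\,\Lambda^{S^2}_{V_d}[2d]\leq1$ I would run the straightening of part (i) one degree lower: in degree $2d$ it should reduce every simple tensor, modulo $\mathcal{E}_{V_d}^{S^2}[2d]$, to a scalar multiple of one fixed basis tensor. For $d=2$ this is a finite verification inside the $64$-dimensional space $V_2^{\otimes6}$; for $d=3$ the ambient space $V_3^{\otimes15}$ is large and this reduction, together with the matching lower bound, is the (computer-assisted) computation of \cite{edge}. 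For the bound $dim\,\Lambda^{S^2}_{V_d}[2d]\geq1$ one must exhibit a nonzero linear functional on $\mathcal{T}_{V_d}^{S^2}[2d]$ that annihilates $\mathcal{E}_{V_d}^{S^2}[2d]$, namely $det^{S^2}$. For $d=2$ this is precisely the expression recalled above,
\[
det^{S^2}\big(\otimes_{1\leq i<j\leq4}(v_{i,j})\big)=\sum_{(\Gamma_1,\Gamma_2)\in\mathcal{P}^{h,cf}_2(K_4)}\varepsilon_2^{S^2}\big((\Gamma_1,\Gamma_2)\big)\,M_{(\Gamma_1,\Gamma_2)}\big(\otimes_{1\leq i<j\leq4}(v_{i,j})\big),
\]
and one checks directly that it kills every monochromatic-triangle simple tensor (the contributing monomials cancel in pairs once the sign $\varepsilon_2^{S^2}$ is taken into account) and is not identically zero by evaluating it on a convenient basis tensor; the $d=3$ analogue is the corresponding formula in \cite{edge}.

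I expect the straightening argument to be the main obstacle — both the reduction to a monochromatic triangle in degree $2d+1$ and the reduction to a one-dimensional quotient in degree $2d$ — since it is exactly there that the combinatorics of $K_n$ and the linear algebra of $V_d$ interact. The stabilization step is purely formal, and the rest of the content is the (lengthy, and in the $d=3$ case machine-assisted) verification that the explicit formula for $det^{S^2}$ has the required vanishing property.
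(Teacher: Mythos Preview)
The paper does not prove this proposition at all; it is quoted from \cite{sta2} and \cite{edge} as background. So there is no in-paper argument to compare against, and your overall outline --- upper bound from the relations, lower bound from the explicit functional $det^{S^2}$ --- is consistent with how those references are described here.

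That said, the specific mechanism you propose for (i) does not work as written. You say the $2d$ edge-vectors at a fixed vertex of $K_{2d+1}$ are linearly dependent in $V_d$, and that this ``rewrites the tensor \dots\ as a combination of simple tensors each containing a monochromatic triangle.'' Linear dependence lets you replace one edge-vector by a combination of the others; expanding produces tensors in which \emph{two} edges at that vertex carry the same vector --- but the relation in $\Lambda^{S^2}_V$ demands that the \emph{three} edges of a triangle coincide, and two equal edges meeting at a vertex do not form such a triangle. This is exactly where the $S^2$ story diverges from the exterior-algebra argument you are imitating. Concretely, for $d=2$ the pentagon/pentagram $2$-coloring of $K_5$ is a basis tensor with no monochromatic triangle, so even after reducing to basis vectors there is no forbidden configuration to point at; the vanishing has to come from iterated use of the polarized relations, not from a single substitution. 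The kind of argument that actually succeeds --- and whose $S^3$ analogue is worked out in this very paper as Lemma~\ref{trivEntries} and Corollary~\ref{cor1} --- first reduces to basis tensors and then shows inductively, by using the relations to move entries between ``slices,'' that every basis tensor in degree $>2d$ lies in $\mathcal{E}^{S^2}_{V_d}$. Your stabilization step and your treatment of (ii)--(iii) are fine; it is only this core reduction in (i) that needs a different idea than linear dependence at a vertex.
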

\begin{remark} It was conjectured in \cite{sta2} that if $dim(V_d)=d$ then $dim\left(\Lambda^{S^2}_{V_d}[2d]\right)=1$. Notice that if the conjecture is true then we get the existence of a unique nontrivial linear map $$det^{S^2}: V_d^{\otimes d(2d-1)}\to k$$ with the property that $det^{S^2}(\otimes(v_{i,j})_{1\leq i<j\leq 2d})=0$ if there exist $1\leq x<y<z\leq 2d$ such that $v_{x,y}=v_{x,z}=v_{y,z}$. When $d=2$ or $d=3$  such a map exists and it has the expression
\begin{equation}
det^{S^2}(\otimes_{1\leq i<j\leq 2d} (v_{i,j}))=\sum_{(\Gamma_1,...,\Gamma_d)\in \mathcal{P}^{h,cf}_d(K_{2d})} \varepsilon_d^{S^2}((\Gamma_1,...,\Gamma_d))M_{(\Gamma_1,...,\Gamma_d)}(\otimes_{1\leq i<j\leq 2d}(v_{i,j})).
\label{detS2d}
\end{equation}
Here the sum is taken over all the cycle-free homogeneous $d$-partitions $(\Gamma_1,...,\Gamma_d)$ of the complete graph $K_{2d}$, and $M_{(\Gamma_1,...,\Gamma_d)}(\otimes_{1\leq i<j\leq 2d}(v_{i,j}))$ is a certain monomial associated to the partition $(\Gamma_1,...,\Gamma_d)$ and to the element $\otimes_{1\leq i<j\leq 2d}(v_{i,j})\in V_d^{\otimes d(2d-1)}$. 
\end{remark}

\begin{remark}
When $d=2$ or $d=3$ the condition of $det^{S^2}(\otimes (v_{i,j})_{1\leq i<j\leq 2d})=0$ has a nice geometrical interpretation that  was discussed in \cite{sv}. It is interesting to notice that the case $d=2$ is essentially equivalent with an old result of Pappus of Alexandria (see \cite{c}). 
\end{remark}

\begin{remark} The construction of $\Lambda^{S^2}_V$ was motivated by results from \cite{p} on higher Hochschild homology, and from \cite{vo} on Swiss-Cheese operads. The notation $det^{S^2}$ is justified by the fact that in the construction of $\Lambda^{S^2}_V$ we use a particular simplicial presentation of the sphere $S^2$.  If we extend that analogy, one can argue that the usual determinant should be denoted as $det^{S^1}$. In this paper we will deal with similar constructions associated to a simplicial structure of the sphere $S^3$. 
\end{remark}

\subsection{Partition of hypergraphs}

We recall from \cite{bretto} a few definitions and examples of hypergraphs that will be used later in this paper.  
\begin{definition} A hypergraph $\mathcal{H}=(V, E)$ consists of two finite sets  $V = \{v_1, v_2, \dots , v_n\}$ called  the set of vertices, and  $E=\{E_1, E_2, ... , E_m\}$  a family of subsets of $V$ called the hyperedges of $\mathcal{H}$. \\
If every hyperedge of $\mathcal{H}$ is of size $r$, then $\mathcal{H}$ is called an $r$-uniform hypergraph. \\
For $2 \leq r \leq n$, we define the complete
$r$-uniform hypergraph to be the hypergraph $K^r_n= (V, E)$ for which $V=\{1,2,\dots ,n\}$, and  $E$
is the family of all subsets of $V$ of size $r$.\\
\end{definition} 

A $2$-uniform hypergraph is nothing else but a graph, and $K_n^2$ is the complete graph $K_n$. 
In this paper we are interested in $3$-uniform hypergraphs.  A hyperedge of a $3$-uniform hypergraph will be called a face.

\begin{definition} Let $\mathcal{H}$ be a hypergraph and $k\geq 2$ be a natural number. A $k$-partition of $\mathcal{H}$ is an ordered collection $\mathcal{P}=(\mathcal{H}_1,\mathcal{H}_2,...,\mathcal{H}_k)$ of sub-hypergraphs $\mathcal{H}_i$ of  $\mathcal{H}$ such that:\\
(i) $V(\mathcal{H}_i)=V(\mathcal{H})$ for all $1\leq i \leq k$,  \\
(ii) $E(\mathcal{H}_i)\cap E(\mathcal{H}_j)=\emptyset$ for all $i\neq j$, \\
(iii) $\cup_{i=1}^nE(\mathcal{H}_i)=E(\mathcal{H})$. \\
We say that the partition $\mathcal{P}=(\mathcal{H}_1,\mathcal{H}_2,...,\mathcal{H}_k)$ in homogeneous if $|E(\mathcal{H}_i)\vert=|E(\mathcal{H}_j)\vert$ for all $1\leq i<j\leq k$. \\
 We will denote by $\mathcal{P}_d(\mathcal{H})$ the set of $d$-partitions of the hypergraph $\mathcal{H}$, and with $\mathcal{P}_d^{h}(\mathcal{H})$ the set of homogeneous $d$-partitions of the hypergraph $\mathcal{H}$. 
\end{definition}

Since we are only interested in  $3$-uniform hypergraphs we will draw each hyperedge as a triangle (face) that connects three vertices.  In order to avoid drawing three dimensional pictures we will draw a projection in the plane, allowing the possibility to draw the same vertex several times in our picture. 
Finally, because in this paper we are interested mostly in $2$-partitions, when we draw a partition $(\mathcal{H}_1,\mathcal{H}_2)$, we will shade the hyperedges in $\mathcal{H}_1$, and do not shade the hyperedges in $\mathcal{H}_2$. 

\begin{example} (i) Consider the complete hypergraph $K_4^3$. Take 
$E(\mathcal{H}_1)=\{\{1,2,4\},\{1,3,4\}\}$, and $E(\mathcal{H}_2)=\{\{1,2,3\},\{2,3,4\}\}$, then $(\mathcal{H}_1, \mathcal{H}_2)$ is a homogeneous, $2$-partition for $K_4^3$ (see  Figure \ref{2part1}).\\
(ii) Consider the complete graph $K_4^3$. Take 
$E(\mathcal{L}_1)=\{\{1,2,3\},\{1,2,4\},\{1,3,4\}\}$, and $E(\mathcal{L}_2)=\{\{2,3,4\}\}$, then $(\mathcal{L}_1, \mathcal{L}_2)$ is a  $2$-partition for $K_4^3$ that is not homogeneous (see  Figure \ref{2part2}).
\end{example}

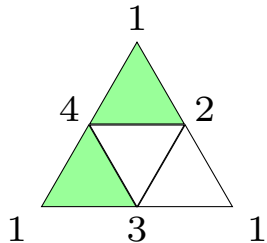
\begin{figure}[h!]
 \centering
 \begin{tikzpicture}[scale=2,every node/.style={scale=2}]
  \node[regular polygon,rotate=0,draw,regular polygon sides = 3,fill=green!40] (t124) at (0,-0.72) {};
  \node[regular polygon,rotate=60,draw,regular polygon sides = 3] (t234) at (0,-1.09) {};
  \node[regular polygon,rotate=0,draw,regular polygon sides = 3,fill=green!40] (t346) at (-0.32,-1.27) {};
  \node[regular polygon,rotate=0,draw,regular polygon sides = 3] (t236) at (0.32,-1.27) {};

  \node at (-0.8,-1.6) {\tiny1};
  \node at (0.8,-1.6) {\tiny1};
  \node at (0,-0.2) {\tiny1};
  \node at (0.45,-0.8) {\tiny2};
  \node at (-0,-1.6) {\tiny3};
  \node at (-0.45,-0.8) {\tiny4};
 \end{tikzpicture}
 \caption{$(\mathcal{H}_1,\mathcal{H}_2)$ a homogeneous $2$-partition of the complete hypergraph $K_4^3$ }\label{2part1}
\end{figure}

\begin{figure}[h!]
 \centering
 \begin{tikzpicture}[scale=2,every node/.style={scale=2}]
  \node[regular polygon,rotate=0,draw,regular polygon sides = 3,fill=green!40] (t124) at (0,-0.72) {};
  \node[regular polygon,rotate=60,draw,regular polygon sides = 3] (t234) at (0,-1.09) {};
  \node[regular polygon,rotate=0,draw,regular polygon sides = 3,fill=green!40] (t346) at (-0.32,-1.27) {};
  \node[regular polygon,rotate=0,draw,regular polygon sides = 3,fill=green!40] (t236) at (0.32,-1.27) {};

  \node at (-0.8,-1.6) {\tiny1};
  \node at (0.8,-1.6) {\tiny1};
  \node at (0,-0.2) {\tiny1};
  \node at (0.45,-0.8) {\tiny2};
  \node at (-0,-1.6) {\tiny3};
  \node at (-0.45,-0.8) {\tiny4};
 \end{tikzpicture}
 \caption{$(\mathcal{L}_1,\mathcal{L}_2)$ a non-homogeneous $2$-partition of the complete hypergraph $K_4^3$ }\label{2part2}
\end{figure}

\begin{remark} Notice that if $\mathcal{H}$ is a sub-hypergraph of $K_n^3$ and $\sigma\in S_n$ then  $\sigma\cdot \mathcal{H}$ is also a sub-hypergraph of $K_n^3$, where $V(\sigma\cdot \mathcal{H})=\{\sigma(v)\vert v\in V(\mathcal{H})\}$ and 
$E(\sigma\cdot \mathcal{H})=\{\{\sigma(a),\sigma(b),\sigma(c)\}\vert \{a,b,c\}\in E(\mathcal{H})\}$. 

With this notation one can see that on $\mathcal{P}_d^{h}(K_{3d}^3)$  there is an action of the group $S_{3d}\times S_d$ given by 
$$(\sigma,\tau)\cdot (\mathcal{H}_1,\mathcal{H}_2,\dots,\mathcal{H}_d)=(\sigma\cdot\mathcal{H}_{\tau^{-1}(1)},\sigma\cdot\mathcal{H}_{\tau^{-1}(2)},\dots,\sigma\cdot\mathcal{H}_{\tau^{-1}(d)}).$$
Later in the paper we will use only the case $d=2$. 
\end{remark}

\section{Generators and Relations for $\Lambda^{S^3}_V$}

In this section, we  define $\Lambda^{S^3}_{V}$ for a finite dimensional vector space $V$ and discuss connections with partitions of hypergraphs. 
 
Take $$\mathcal{T}^{S^3}_{V}=\bigoplus_{n\geq 0}\mathcal{T}^{S^3}_{V}[n],$$ 
where $$\mathcal{T}^{S^3}_{V}[n]=V^{\otimes\frac{n(n-1)(n-2)}{6}}.$$ A simple tensor in $\mathcal{T}^{S^3}_{V}[n]$ is  denoted by $\otimes_{1\leq i<j<k\leq n} (v_{i,j,k})$ where $v_{i,j,k}\in V$. A general element in $\mathcal{T}^{S^3}_{V}[n]$ is a sum of simple tensors. One should think about $\mathcal{T}^{S^3}_{V}$ as a generalization of the tensor algebra $T(V)$, or of the tensor  GSC-operad  $\mathcal{T}^{S^2}_{V}$.

When convenient we will also use a tensor matrix notation similar with the ones from \cite{car1}, or \cite{sta2}, 
\begin{eqnarray}
\omega=\otimes_{1\leq i<j<k\leq n} (v_{i,j,k})=\begin{pmatrix}
v_{1,2,3}	 &&&\otimes \\
v_{1,2,4}&v_{1,3,4}&	&\\
         &v_{2,3,4}& &\\
v_{1,2,5}&v_{1,3,5}&v_{1,4,5}  &\\
          &v_{2,3,5}&v_{2,4,5} &\\
	     &        &v_{3,4,5}    &\\
\dots&\dots& \dots &\\
	  v_{1,2,6}&\dots &v_{1,n-2,n}&v_{1,n-1,n}\\
           &\dots &v_{2,n-2,n}&v_{2,n-1,n}\\
	          &        &\dots& .\\
		         &        &       &v_{n-2,n-1,n}\\
\end{pmatrix}\in V^{\otimes\frac{n(n-1)(n-2)}{6}}=\mathcal{T}^{S^3}_{V}[n].
\label{eqTn}
\end{eqnarray}

\begin{definition}
Take  $\mathcal{E}^{S^3}_V[n]$ to be the subspace of 
$\mathcal{T}^{S^3}_V[n]$ generated by tensors $\otimes_{1\leq i<j<k\leq n} (v_{i,j,k})$ with the property that there exists $1\leq x<y<z<t\leq n$ such that $v_{x,y,z}=v_{x,y,t}=v_{x,z,t}=v_{y,z,t}$. 
We define 
$$\Lambda^{S^3}_{V}[n]= \frac{\mathcal{T}^{S^3}_V[n]}{\mathcal{E}^{S^3}_V[n]},$$
and 
$$\Lambda^{S^3}_V=\bigoplus_{n\geq 0}\Lambda^{S^3}_V[n].$$
\end{definition}
Again, one can think of $\Lambda^{S^3}_V$ as a generalization of the exterior algebra, or of the exterior GSC-operad 
$\Lambda^{S^2}_V$. 

The image of the element  $\omega=\otimes_{1\leq i<j<k\leq n} (v_{i,j,k})\in \mathcal{T}^{S^3}_{V}[n]$ from \ref{eqTn} in $\Lambda^{S^3}_V[n]$ will be denoted as $\hat{\omega}=\wedge_{1\leq i<j<k\leq n} (v_{i,j,k})$, or as 
\begin{eqnarray*}
\hat{\omega}= \begin{pmatrix}
v_{1,2,3}	 &&&\wedge \\
v_{1,2,4}&v_{1,3,4}&	&\\
         &v_{2,3,4}& &\\
v_{1,2,5}&v_{1,3,5}&v_{1,4,5}  &\\
          &v_{2,3,5}&v_{2,4,5} &\\
	     &        &v_{3,4,5}    &\\
\dots&\dots& \dots &\\
	  v_{1,2,6}&\dots &v_{1,n-2,n}&v_{1,n-1,n}\\
           &\dots &v_{2,n-2,n}&v_{2,n-1,n}\\
	          &        &\dots& .\\
		         &        &       &v_{n-2,n-1,n}\\
\end{pmatrix}\in \Lambda^{S^3}_V[n]. 
\end{eqnarray*}
Let's see a few examples of identities in $\Lambda^{S^3}_{V_2}[4]$.
\begin{example}
Take $v=\alpha e_1+\beta e_2\in V_2$, we have
\begin{eqnarray*}&0=\begin{pmatrix}
 v&\wedge 	\\
v&v\\
      &v\\
\end{pmatrix}=\begin{pmatrix}
\alpha e_1+\beta e_2&\wedge 	 \\
\alpha e_1+\beta e_2&\alpha e_1+\beta e_2\\
      &\alpha e_1+\beta e_2\\
\end{pmatrix}
=\alpha^4\begin{pmatrix}
 e_1&\wedge 	 \\
 e_1& e_1\\
      & e_1\\
\end{pmatrix}+\beta^4\begin{pmatrix}
 e_2&\wedge 	 \\
 e_2& e_2\\
      & e_2\\
\end{pmatrix}&\\
&+\alpha^3\beta(\begin{pmatrix}
 e_1&\wedge 	 \\
 e_1& e_1\\
      & e_2\\
\end{pmatrix}+
\begin{pmatrix}
 e_1&\wedge 	 \\
 e_1& e_2\\
      & e_1\\
\end{pmatrix}+
\begin{pmatrix}
 e_1&\wedge 	 \\
 e_2& e_1\\
      & e_1\\
\end{pmatrix}+
\begin{pmatrix}
 e_2&\wedge 	 \\
 e_1& e_1\\
      & e_1\\
\end{pmatrix})&\\
&\alpha^2\beta^2(\begin{pmatrix}
 e_1&\wedge 	 \\
 e_1& e_2\\
      & e_2\\
\end{pmatrix}+
\begin{pmatrix}
 e_1&\wedge 	 \\
 e_2& e_1\\
      & e_2\\
\end{pmatrix}+
\begin{pmatrix}
 e_1&\wedge 	 \\
 e_2& e_2\\
      & e_1\\
\end{pmatrix}+
\begin{pmatrix}
 e_2&\wedge 	 \\
 e_1& e_1\\
      & e_2\\
\end{pmatrix}
+
\begin{pmatrix}
 e_2&\wedge 	 \\
 e_1& e_2\\
      & e_1\\
\end{pmatrix}+
\begin{pmatrix}
 e_2&\wedge 	 \\
 e_2& e_1\\
      & e_1\\
\end{pmatrix})&\\
&+\alpha\beta^3(\begin{pmatrix}
 e_1&\wedge 	 \\
 e_2& e_2\\
      & e_2\\
\end{pmatrix}+
\begin{pmatrix}
 e_2&\wedge 	 \\
 e_1& e_2\\
      & e_2\\
\end{pmatrix}+
\begin{pmatrix}
 e_2&\wedge 	 \\
 e_2& e_1\\
      & e_2\\
\end{pmatrix}+
\begin{pmatrix}
 e_2&\wedge 	 \\
 e_2& e_2\\
      & e_1\\
\end{pmatrix}).&
\end{eqnarray*}
Since this is true for all $\alpha$ and $\beta$  we get that the following identities
\begin{eqnarray}
\begin{pmatrix}
 e_1&\wedge 	 \\
 e_1& e_1\\
      & e_2\\
\end{pmatrix}+
\begin{pmatrix}
 e_1&\wedge 	 \\
 e_1& e_2\\
      & e_1\\
\end{pmatrix}+
\begin{pmatrix}
 e_1&\wedge 	 \\
 e_2& e_1\\
      & e_1\\
\end{pmatrix}+
\begin{pmatrix}
 e_2&\wedge 	 \\
 e_1& e_1\\
      & e_1\\
\end{pmatrix}=0\in \Lambda^{S^3}_{V_2}[4], 
\end{eqnarray}
 and
\begin{eqnarray}
\begin{pmatrix}
 e_1&\wedge 	 \\
 e_1& e_2\\
      & e_2\\
\end{pmatrix}+
\begin{pmatrix}
 e_1&\wedge 	 \\
 e_2& e_1\\
      & e_2\\
\end{pmatrix}+
\begin{pmatrix}
 e_1&\wedge 	 \\
 e_2& e_2\\
      & e_1\\
\end{pmatrix}+
\begin{pmatrix}
 e_2&\wedge 	 \\
 e_1& e_1\\
      & e_2\\
\end{pmatrix}
+
\begin{pmatrix}
 e_2&\wedge 	 \\
 e_1& e_2\\
      & e_1\\
\end{pmatrix}+
\begin{pmatrix}
 e_2&\wedge 	 \\
 e_2& e_1\\
      & e_1\\
\end{pmatrix}=0\in \Lambda^{S^3}_{V_2}[4].
\end{eqnarray}
\end{example}
More generally we have the following result.
\begin{proposition} \label{proprel} Let  $V_d$ be a vector space of dimension $d$, and $\mathcal{B}_d=\{e_1,\dots,e_d\}$ a basis for $V_d$. Then $\mathcal{E}^{S^3}_{V_d}[4]$ is the subspace of  $\mathcal{T}^{S^3}_{V_d}[4]$ linearly generated by the following elements: 
\begin{eqnarray}
\begin{pmatrix}
 e_i&\otimes 	 \\
 e_i& e_i\\
      & e_i\\
\end{pmatrix},\label{rel1}
\end{eqnarray}
for all $1\leq i\leq d$,

\begin{eqnarray}
\begin{pmatrix}
 e_i&\otimes 	 \\
 e_i& e_i\\
      & e_j\\
\end{pmatrix}+
\begin{pmatrix}
 e_i&\otimes 	 \\
 e_i& e_j\\
      & e_i\\
\end{pmatrix}+
\begin{pmatrix}
 e_i&\otimes 	 \\
 e_j& e_i\\
      & e_i\\
\end{pmatrix}+
\begin{pmatrix}
 e_j&\otimes 	 \\
 e_i& e_i\\
      & e_i\\
\end{pmatrix},\label{rel2}
\end{eqnarray}
for all $1\leq i\neq j\leq d,$

\begin{eqnarray}
\begin{pmatrix}
 e_i&\otimes 	 \\
 e_i& e_j\\
      & e_j\\
\end{pmatrix}+
\begin{pmatrix}
 e_i&\otimes 	 \\
 e_j& e_i\\
      & e_j\\
\end{pmatrix}+
\begin{pmatrix}
 e_i&\otimes 	 \\
 e_j& e_j\\
      & e_i\\
\end{pmatrix}+
\begin{pmatrix}
 e_j&\otimes 	 \\
 e_i& e_i\\
      & e_j\\
\end{pmatrix}
+
\begin{pmatrix}
 e_j&\otimes 	 \\
 e_i& e_j\\
      & e_i\\
\end{pmatrix}+
\begin{pmatrix}
 e_j&\otimes 	 \\
 e_j& e_i\\
      & e_i\\
\end{pmatrix},\label{rel3}
\end{eqnarray}
for all $1\leq i<j\leq d,$

\begin{eqnarray}
\sum_{\sigma \in S_2(j,k)}(\begin{pmatrix}
 e_i&\otimes 	 \\
 e_i& e_{\sigma(j)}\\
      & e_{\sigma(k)}\\
\end{pmatrix}+
\begin{pmatrix}
 e_i&\otimes 	 \\
 e_{\sigma(j)}& e_i\\
      & e_{\sigma(k)}\\
\end{pmatrix}+
\begin{pmatrix}
 e_i&\otimes 	 \\
 e_{\sigma(j)}& e_{\sigma(k)}\\
      & e_i\\
\end{pmatrix} \nonumber\\ \label{rel4}
\\
+\begin{pmatrix}
 e_{\sigma(j)}&\otimes 	 \\
 e_i& e_i\\
      & e_{\sigma(k)}\\
\end{pmatrix}
+
\begin{pmatrix}
 e_{\sigma(j)}&\otimes 	 \\
 e_i& e_{\sigma(k)}\\
      & e_i\\
\end{pmatrix}+
\begin{pmatrix}
 e_{\sigma(j)}&\otimes 	 \\
 e_{\sigma(k)}& e_i\\
      & e_i\\
\end{pmatrix}),\nonumber
\end{eqnarray}
for all $1\leq i\leq d$, $1\leq j<k\leq d$, $i\neq j$,  $i\neq k$ with the sum taken over all permutations of the set $\{j,k\}$,

\begin{eqnarray}
\sum_{\sigma\in S_4(i,j,k,l)}\begin{pmatrix}
 e_{\sigma(i)}&\otimes 	 \\
 e_{\sigma(j)}& e_{\sigma(k)}\\
      & e_{\sigma(l)}\\
\end{pmatrix}.\label{rel5}
\end{eqnarray}
for all $1\leq i<j<k<l\leq d$, where $\sigma$ runs over all permutations of the set $\{i,j,k,l\}$. 
\end{proposition}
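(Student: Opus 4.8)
The plan is to prove the two inclusions separately. The easy direction is that every element listed in \eqref{rel1}--\eqref{rel5} lies in $\mathcal{E}^{S^3}_{V_d}[4]$: each is (by definition) a linear combination of simple tensors of the form $\otimes_{1\leq i<j<k\leq 4}(v_{i,j,k})$ in which, reading off the four entries $v_{1,2,3},v_{1,2,4},v_{1,3,4},v_{2,3,4}$ of the $K_4^3$-tensor matrix, all four are equal to a common basis vector. Concretely, \eqref{rel1} with all entries $e_i$ is itself a generator of $\mathcal{E}^{S^3}_{V_d}[4]$ taking $(x,y,z,t)=(1,2,3,4)$; and each of \eqref{rel2}--\eqref{rel5} is obtained by expanding $0=\begin{pmatrix} v&\otimes\\ v&v\\ &v\end{pmatrix}$ with $v=\sum_m \lambda_m e_m$ and collecting the coefficient of a fixed monomial $\lambda^{\alpha}$ in the $\lambda_m$'s, hence lies in $\mathcal{E}^{S^3}_{V_d}[4]$ since the whole expression does and the subspace is spanned by homogeneous pieces. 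So the subspace $R$ spanned by \eqref{rel1}--\eqref{rel5} satisfies $R\subseteq \mathcal{E}^{S^3}_{V_d}[4]$.

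For the reverse inclusion I would argue that $R$ already contains every generator of $\mathcal{E}^{S^3}_{V_d}[4]$, i.e. every simple tensor $\otimes(v_{i,j,k})$ with $v_{1,2,3}=v_{1,2,4}=v_{1,3,4}=v_{2,3,4}=v$ for some $v\in V_d$. First I would reduce to the case $v=e_i$ a basis vector: writing $v=\sum_m \lambda_m e_m$ and expanding multilinearly in the four slots, the tensor $\begin{pmatrix} v&\otimes\\ v&v\\ &v\end{pmatrix}$ becomes a sum, indexed by functions $f\colon\{(1,2,3),(1,2,4),(1,3,4),(2,3,4)\}\to\{1,\dots,d\}$, of $\bigl(\prod \lambda\bigr)\,\begin{pmatrix} e_{f(1,2,3)}&\otimes\\ e_{f(1,2,4)}&e_{f(1,3,4)}\\ &e_{f(2,3,4)}\end{pmatrix}$. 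Grouping the $\binom{d}{\le 2}$-many monomial types in the $\lambda$'s, the vanishing identities \eqref{rel1}--\eqref{rel5} are precisely the partial sums appearing as coefficients: the all-equal coefficient gives \eqref{rel1}, the "three-one" coefficient $\lambda_i^3\lambda_j$ gives \eqref{rel2}, the "two-two" coefficient $\lambda_i^2\lambda_j^2$ gives \eqref{rel3}, the $\lambda_i^2\lambda_j\lambda_k$ coefficient gives \eqref{rel4}, and the $\lambda_i\lambda_j\lambda_k\lambda_l$ coefficient gives \eqref{rel5}. Thus every pure-basis "constant tensor" $\begin{pmatrix} e_i&\otimes\\ e_i&e_i\\ &e_i\end{pmatrix}$ lies in $R$ directly via \eqref{rel1}, and the expansion shows the general $\begin{pmatrix} v&\otimes\\ v&v\\ &v\end{pmatrix}$ is a $k$-linear combination of the listed relations — so it lies in $R$. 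Since these constant tensors span $\mathcal{E}^{S^3}_{V_d}[4]$ by definition, we get $\mathcal{E}^{S^3}_{V_d}[4]\subseteq R$, completing the proof.

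The one genuine point requiring care — and the main obstacle — is the bookkeeping of which multi-index monomials $\prod\lambda$ actually occur and what their coefficient sums are: one must check that for four slots taking values in $\{1,\dots,d\}$, the distinct monomial shapes are exactly (all four equal), ($3{+}1$), ($2{+}2$), ($2{+}1{+}1$), ($1{+}1{+}1{+}1$), that each of \eqref{rel1}--\eqref{rel5} is the full coefficient of its shape (in particular that in \eqref{rel4} the symmetrization over $S_2(j,k)$ is exactly what the coefficient of $\lambda_i^2\lambda_j\lambda_k$ collects, since the two slots holding $e_j$ versus $e_k$ are interchangeable), and that no relation is missing. Because $k$ is infinite, the step "the full expansion vanishes for all $\lambda$, hence each monomial coefficient vanishes" is legitimate (and it is exactly the reasoning already illustrated in the Example preceding the proposition for $d=2$). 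Everything else is a direct, if slightly tedious, multilinear expansion; I would present the $d=2$ case from the Example as the template and then indicate the straightforward generalization to arbitrary $d$.
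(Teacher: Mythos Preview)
Your proposal is correct and follows essentially the same approach as the paper: expand $\begin{pmatrix} v&\otimes\\ v&v\\ &v\end{pmatrix}$ with $v=\sum_m\lambda_m e_m$ multilinearly and identify relations \eqref{rel1}--\eqref{rel5} as the coefficients of the five monomial types $\lambda_i^4$, $\lambda_i^3\lambda_j$, $\lambda_i^2\lambda_j^2$, $\lambda_i^2\lambda_j\lambda_k$, $\lambda_i\lambda_j\lambda_k\lambda_l$. You are more explicit than the paper in separating the two inclusions and in invoking the infinite-field hypothesis to extract individual coefficients; the paper's proof is a two-line sketch of the same computation.
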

\begin{proof}
In the definition of $\mathcal{E}^{S^3}_V[n]$ take $v=\alpha e_i+\beta e_j+\gamma e_k +\delta e_l$. Using linearly we get several terms with coefficients homogeneous monomials of total degree $4$ in $\alpha$, $\beta$, $\gamma$ and $\delta$. The expressions corresponding of $\alpha^4$, $\alpha^3\beta$, $\alpha^2\beta^2$, $\alpha^2\beta\gamma$ and $\alpha\beta\gamma\delta$ are respectively  relations \ref{rel1}, \ref{rel2}, \ref{rel3}, \ref{rel4} and \ref{rel5}. 
\end{proof}

\begin{remark}
One can notice that if $d=2$ only relation \ref{rel1} and \ref{rel2} make sense. If $d=3$  we can add \ref{rel3} and \ref{rel4}, while for $d\geq 4$ all five relations make sense. 
\end{remark}

\begin{remark} Even if $n\geq 4$ the above relations still give a set of generators  for  $\mathcal{E}_{V_d}^{S^3}[n]$ as a vector space.  More precisely, for $n\geq 4$ and $1\leq x<y<z<t\leq n$ we can obtain an element in $\mathcal{E}_{V_d}^{S^3}[n]$ by considering a generic element $\otimes_{1\leq i<j<k\leq n}^{(x,y,z,t)}(v_{i,j,k})\in V_d^{\otimes \frac{n(n-1)(n-2)}{6}-4}$ that has all the entries $v_{i,j,k}\in \{e_1,\dots,e_d\}$, and empty spots in the positions $(x,y,z)$, $(x,y,t)$, $(x,z,t)$ and $(y,z,t)$. In order to get an element in $\mathcal{E}_{V_d}^{S^3}[n]$ one  fills the empty positions  with any of the five relations \ref{rel1}, \ref{rel2}, \ref{rel3}, \ref{rel4} and \ref{rel5}.

For example, if we take $n=6$, $(x,y,z,t)=(1,3,4,6)$, we consider a generic element that is missing entries in the positions $(1,3,4)$, $(1,3,6)$, $(1,4,6)$ and $(3,4,6)$ 
$$\begin{pmatrix}
v_{1,2,3}	 &&&\otimes \\
v_{1,2,4}&\boxed{}&	&\\
         &v_{2,3,4}& &\\
v_{1,2,5}&v_{1,3,5}&v_{1,4,5}  &\\
          &v_{2,3,5}&v_{2,4,5} &\\
	      &        &v_{3,4,5}&    \\
	  v_{1,2,6}&\boxed{}&\boxed{}&v_{1,5,6}\\
           &v_{2,3,6}&v_{2,4,6}&v_{2,5,6}\\
	          &        &\boxed{}&v_{3,5,6}\\
		         &        &       &v_{4,5,6}\\
\end{pmatrix}\in V_d^{\otimes 20-4},$$
and $v_{i,j,k}\in \{e_1,\dots,e_d\}$. If we use relation \ref{rel2} then we get the following equality
\begin{eqnarray*}
&&\begin{pmatrix}
v_{1,2,3}	 &&&\wedge \\
v_{1,2,4}&\boxed{e_i}&	&\\
         &v_{2,3,4}& &\\
v_{1,2,5}&v_{1,3,5}&v_{1,4,5}  &\\
          &v_{2,3,5}&v_{2,4,5} &\\
	      &        &v_{3,4,5}&    \\
	  v_{1,2,6}&\boxed{e_i}&\boxed{e_i}&v_{1,5,6}\\
           &v_{2,3,6}&v_{2,4,6}&v_{2,5,6}\\
	          &        &\boxed{e_j}&v_{3,5,6}\\
		         &        &       &v_{4,5,6}\\
\end{pmatrix}+
\begin{pmatrix}
v_{1,2,3}	 &&&\wedge \\
v_{1,2,4}&\boxed{e_i}&	&\\
         &v_{2,3,4}& &\\
v_{1,2,5}&v_{1,3,5}&v_{1,4,5}  &\\
          &v_{2,3,5}&v_{2,4,5} &\\
	      &        &v_{3,4,5}&    \\
	  v_{1,2,6}&\boxed{e_i}&\boxed{e_j}&v_{1,5,6}\\
           &v_{2,3,6}&v_{2,4,6}&v_{2,5,6}\\
	          &        &\boxed{e_i}&v_{3,5,6}\\
		         &        &       &v_{4,5,6}\\
\end{pmatrix}+\\
\\
&&\begin{pmatrix}
v_{1,2,3}	 &&&\wedge \\
v_{1,2,4}&\boxed{e_i}&	&\\
         &v_{2,3,4}& &\\
v_{1,2,5}&v_{1,3,5}&v_{1,4,5}  &\\
          &v_{2,3,5}&v_{2,4,5} &\\
	      &        &v_{3,4,5}&    \\
	  v_{1,2,6}&\boxed{e_j}&\boxed{e_i}&v_{1,5,6}\\
           &v_{2,3,6}&v_{2,4,6}&v_{2,5,6}\\
	          &        &\boxed{e_i}&v_{3,5,6}\\
		         &        &       &v_{4,5,6}\\
\end{pmatrix}+
\begin{pmatrix}
v_{1,2,3}	 &&&\wedge \\
v_{1,2,4}&\boxed{e_j}&	&\\
         &v_{2,3,4}& &\\
v_{1,2,5}&v_{1,3,5}&v_{1,4,5}  &\\
          &v_{2,3,5}&v_{2,4,5} &\\
	      &        &v_{3,4,5}&    \\
	  v_{1,2,6}&\boxed{e_i}&\boxed{e_i}&v_{1,5,6}\\
           &v_{2,3,6}&v_{2,4,6}&v_{2,5,6}\\
	          &        &\boxed{e_i}&v_{3,5,6}\\
		         &        &       &v_{4,5,6}\\
\end{pmatrix}= 0\in \Lambda^{S^3}_{V_d}[6]
\end{eqnarray*}
for all $1\leq i\neq j\leq d$. One can easily see that in this way we get all the relations in $\Lambda^{S^3}_{V_d}[n]$. 
\end{remark}


Next we exhibit a system of generators for $\Lambda^{S^3}_{V_d}[n]$ that is indexed by $d$-partitions of the complete hypergraph $K_n^3$. This is similar to the result from   \cite{edge} which gives a relation between a set of  generators for $\Lambda^{S^2}_{V_d}[n]$ and edge partitions of $K_n$ 

Let $\mathcal{B}_d=\{e_1,\dots,e_d\}$ be a basis for $V_d$, we define
\[\mathcal{G}_{\mathcal{B}_d}^{S^3}[n]=\{\otimes_{1\leq i<j<k\leq n}(v_{i,j,k})\in\mathcal{T}^{S^3}_{V_d}[n]\;\vert \; v_{i,j,k}\in  \mathcal{B}_d\}.\]
Because of linearity it is obvious that $\mathcal{G}_{\mathcal{B}_d}^{S^3}[n]$ is a basis for 
$\mathcal{T}^{S^3}_{V_d}[n]$, and so its image in $\Lambda^{S^3}_{V_d}[n]$ will be a system of generators. 

One can notice that there exists a bijection between the elements in $\mathcal{G}_{\mathcal{B}_d}^{S^3}[n]$ and $d$-partitions of the hypergraph $K_n^3$. Indeed to every element in $\omega=\tens_{1\leq i<j<k\leq n}(v_{i,j,k})\in \mathcal{G}_{\mathcal{B}_d}^{S^3}[n]$ we associate the partition $\mathcal{P}_{\omega}=(\mathcal{H}_1,\dots,\mathcal{H}_d)$ where the hyperedge $\{a,b,c\}\in \mathcal{H}_i$ if and only if $v_{a,b,c}=e_i$. It is easy to see that this map is a bijection between
$\mathcal{G}_{\mathcal{B}_d}^{S^3}[n]$ and $\mathcal{P}_d(K_n^3)$.  For a $d$-partition $\mathcal{P}\in \mathcal{P}_d(K_n^3)$ the corresponding element in $\mathcal{G}_{\mathcal{B}_d}^{S^3}[n]$ will be denoted by $\omega_{\mathcal{P}}$.

\begin{example}
Consider the  element:
$$\omega=\begin{pmatrix}
e_1	 &&&\otimes \\
e_1&e_1&	&\\
         &e_2& &\\
e_1&e_2&e_2  &\\
          &e_2&e_2 &\\
	      &        &e_1    &\\
	  e_2&e_2&e_2&e_1\\
           &e_2&e_1&e_1\\
	          &        &e_1&e_2\\
		         &        &       &e_1\\
\end{pmatrix}\in \mathcal{G}_{\mathcal{B}_2}^{S^3}[6].$$

The corresponding partition $\mathcal{P}_{\omega}$ is presented in Figure \ref{figex1}. 
\begin{figure}[h!]
 \centering
 \begin{tikzpicture}[scale=2,every node/.style={scale=2}]
  \node[regular polygon,rotate=60,draw,regular polygon sides = 3] (t126) at (0,0) {};
  \node[regular polygon,rotate=0,draw,regular polygon sides = 3,fill=green!40] (t123) at (-0.32,-0.18) {};
  \node[regular polygon,rotate=0,draw,regular polygon sides = 3,fill=green!40] (t156) at (0.32,-0.18) {};
  \node[regular polygon,rotate=0,draw,regular polygon sides = 3,fill=green!40] (t124) at (0,-0.72) {};
  \node[regular polygon,rotate=60,draw,regular polygon sides = 3,fill=green!40] (t134) at (-0.32,-0.54) {};
  \node[regular polygon,rotate=60,draw,regular polygon sides = 3,fill=green!40] (t125) at (0.32,-0.54) {};
  \node[regular polygon,rotate=0,draw,regular polygon sides = 3] (t245) at (0.63,-0.72) {};
  \node[regular polygon,rotate=60,draw,regular polygon sides = 3] (t145) at (0.94,-0.54) {};
  \node[regular polygon,rotate=0,draw,regular polygon sides = 3,fill=green!40] (t345) at (-0.63,-0.72) {};
  \node[regular polygon,rotate=60,draw,regular polygon sides = 3] (t235) at (-0.94,-0.54) {};
  \node[regular polygon,rotate=60,draw,regular polygon sides = 3] (t234) at (0,-1.09) {};
  \node[regular polygon,rotate=0,draw,regular polygon sides = 3,fill=green!40] (t346) at (-0.32,-1.27) {};
  \node[regular polygon,rotate=0,draw,regular polygon sides = 3] (t236) at (0.32,-1.27) {};
  \node[regular polygon,rotate=60,draw,regular polygon sides = 3,fill=green!40] (t246) at (0.63,-1.09) {};
  \node[regular polygon,rotate=0,draw,regular polygon sides = 3] (t146) at (0.94,-1.27) {};
  \node[regular polygon,rotate=60,draw,regular polygon sides = 3,fill=green!40] (t456) at (-0.63,-1.09) {};
  \node[regular polygon,rotate=0,draw,regular polygon sides = 3,fill=green!40] (t256) at (-0.94,-1.27) {};
  \node[regular polygon,rotate=0,draw,regular polygon sides = 3] (t135) at (0,-1.818) {};
  \node[regular polygon,rotate=60,draw,regular polygon sides = 3] (t356) at (-0.32,-1.63) {};
  \node[regular polygon,rotate=60,draw,regular polygon sides = 3] (t136) at (0.32,-1.63) {};
  \node at (0.3,0.3) {\tiny6};
  \node at (-0.3,0.3) {\tiny2};
  \node at (-0.7,-0.25) {\tiny3};
  \node at (0.7,-0.25) {\tiny5};
  \node at (1.3,-0.25) {\tiny1};
  \node at (-1.35,-0.25) {\tiny2};
  \node at (-1.1,-0.95) {\tiny5};
  \node at (1.1,-0.95) {\tiny4};
  \node at (-1.35,-1.4) {\tiny2};
  \node at (1.35,-1.4) {\tiny1};
  \node at (-0.7,-1.6) {\tiny6};
  \node at (0.7,-1.6) {\tiny6};
  \node at (0.35,-2.1) {\tiny1};
  \node at (-0.35,-2.1) {\tiny5};
  
  \node at (0,-0.5) {\tiny1};
  \node at (0.32,-1.1) {\tiny2};
  \node at (-0,-1.7) {\tiny3};
  \node at (-0.32,-1.1) {\tiny4};
 \end{tikzpicture}
 \caption{$\mathcal{P}^{(1)}=\mathcal{P}_{\omega}$ the $2$-partition of $K_6^3$ associate to $\omega$} \label{figex1}
\end{figure}
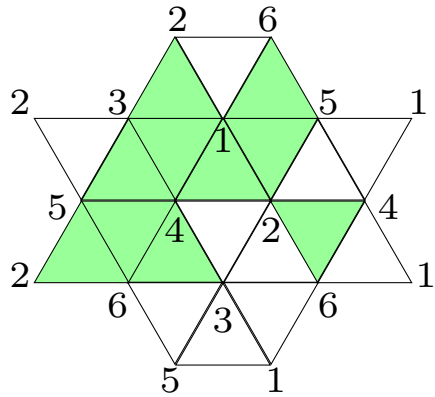

\end{example}


\begin{example} Using the above dictionary between $2$-partitions of $K_n^3$ and elements from $\mathcal{G}_{\mathcal{B}_2}^{S^3}[n]$, one can translate relations \ref{rel2} and \ref{rel3}  into relations among partitions  as in  Figure \ref{figR2}, respectively Figure \ref{figR3}. 
\begin{figure}[h!]
 \centering
 \begin{tikzpicture}[scale=1.5,every node/.style={scale=1.5}]
  \node[regular polygon,rotate=0,draw,regular polygon sides = 3,fill=green!40] (t124) at (0,-0.72) {};
  \node[regular polygon,rotate=60,draw,regular polygon sides = 3] (t234) at (0,-1.09) {};
  \node[regular polygon,rotate=0,draw,regular polygon sides = 3,fill=green!40] (t346) at (-0.32,-1.27) {};
  \node[regular polygon,rotate=0,draw,regular polygon sides = 3,fill=green!40] (t236) at (0.32,-1.27) {};
	\node[shape=circle,minimum size = 5pt,inner sep=0.2pt] (n8) at (1,-0.9) {{\small +}};

  \node at (-0.8,-1.6) {\tiny1};
  \node at (0.8,-1.6) {\tiny1};
  \node at (0,-0.2) {\tiny1};
  \node at (0.45,-0.8) {\tiny2};
  \node at (-0,-1.6) {\tiny3};
  \node at (-0.45,-0.8) {\tiny4};
	
	\node[regular polygon,rotate=0,draw,regular polygon sides = 3,fill=green!40] (t124) at (2,-0.72) {};
  \node[regular polygon,rotate=60,draw,regular polygon sides = 3,fill=green!40] (t234) at (2,-1.09) {};
  \node[regular polygon,rotate=0,draw,regular polygon sides = 3] (t346) at (1.68,-1.27) {};
  \node[regular polygon,rotate=0,draw,regular polygon sides = 3,fill=green!40] (t236) at (2.32,-1.27) {};
\node[shape=circle,minimum size = 5pt,inner sep=0.2pt] (n8) at (3,-0.9) {{\small +}};
	
  \node at (1.2,-1.6) {\tiny1};
  \node at (2.8,-1.6) {\tiny1};
  \node at (2,-0.2) {\tiny1};
  \node at (2.45,-0.8) {\tiny2};
  \node at (2,-1.6) {\tiny3};
  \node at (1.55,-0.8) {\tiny4};
	
		\node[regular polygon,rotate=0,draw,regular polygon sides = 3] (t124) at (4,-0.72) {};
  \node[regular polygon,rotate=60,draw,regular polygon sides = 3,fill=green!40] (t234) at (4,-1.09) {};
  \node[regular polygon,rotate=0,draw,regular polygon sides = 3,fill=green!40] (t346) at (3.68,-1.27) {};
  \node[regular polygon,rotate=0,draw,regular polygon sides = 3,fill=green!40] (t236) at (4.32,-1.27) {};
\node[shape=circle,minimum size = 5pt,inner sep=0.2pt] (n8) at (5,-0.9) {{\small +}};
	
  \node at (3.2,-1.6) {\tiny1};
  \node at (4.8,-1.6) {\tiny1};
  \node at (4,-0.2) {\tiny1};
  \node at (4.45,-0.8) {\tiny2};
  \node at (4,-1.6) {\tiny3};
  \node at (3.55,-0.8) {\tiny4};
	
	\node[regular polygon,rotate=0,draw,regular polygon sides = 3,fill=green!40] (t124) at (6,-0.72) {};
  \node[regular polygon,rotate=60,draw,regular polygon sides = 3,fill=green!40] (t234) at (6,-1.09) {};
  \node[regular polygon,rotate=0,draw,regular polygon sides = 3,fill=green!40] (t346) at (5.68,-1.27) {};
  \node[regular polygon,rotate=0,draw,regular polygon sides = 3] (t236) at (6.32,-1.27) {};
\node[shape=circle,minimum size = 5pt,inner sep=0.2pt] (n8) at (7,-0.9) {{\small=}};
	
  \node at (5.2,-1.6) {\tiny1};
  \node at (6.8,-1.6) {\tiny1};
  \node at (6,-0.2) {\tiny1};
  \node at (6.45,-0.8) {\tiny2};
  \node at (6,-1.6) {\tiny3};
  \node at (5.55,-0.8) {\tiny4};
	\node[shape=circle,minimum size = 5pt,inner sep=0.2pt] (n8) at (7.5,-0.9) {{\small 0}};
 \end{tikzpicture}
 \caption{Combinatorial representation for Equation \ref{rel2} }\label{figR2}
\end{figure}
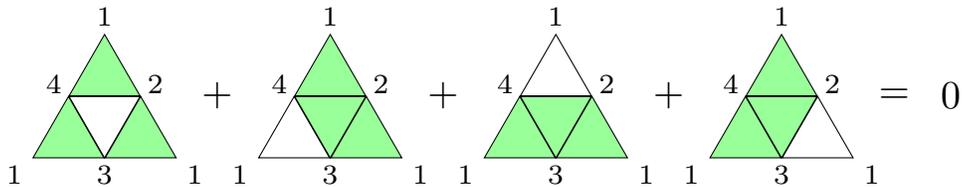

\begin{figure}[h!!]
 \centering
 \begin{tikzpicture}[scale=1.5,every node/.style={scale=1.5}]
  \node[regular polygon,rotate=0,draw,regular polygon sides = 3,fill=green!40] (t124) at (0,-0.72) {};
  \node[regular polygon,rotate=60,draw,regular polygon sides = 3] (t234) at (0,-1.09) {};
  \node[regular polygon,rotate=0,draw,regular polygon sides = 3] (t346) at (-0.32,-1.27) {};
  \node[regular polygon,rotate=0,draw,regular polygon sides = 3,fill=green!40] (t236) at (0.32,-1.27) {};
	\node[shape=circle,minimum size = 5pt,inner sep=0.2pt] (n8) at (1,-0.9) {{\small +}};

  \node at (-0.8,-1.6) {\tiny1};
  \node at (0.8,-1.6) {\tiny1};
  \node at (0,-0.2) {\tiny1};
  \node at (0.45,-0.8) {\tiny2};
  \node at (-0,-1.6) {\tiny3};
  \node at (-0.45,-0.8) {\tiny4};
	
	\node[regular polygon,rotate=0,draw,regular polygon sides = 3] (t124) at (2,-0.72) {};
  \node[regular polygon,rotate=60,draw,regular polygon sides = 3] (t234) at (2,-1.09) {};
  \node[regular polygon,rotate=0,draw,regular polygon sides = 3,fill=green!40] (t346) at (1.68,-1.27) {};
  \node[regular polygon,rotate=0,draw,regular polygon sides = 3,fill=green!40] (t236) at (2.32,-1.27) {};
\node[shape=circle,minimum size = 5pt,inner sep=0.2pt] (n8) at (3,-0.9) {{\small +}};
	
  \node at (1.2,-1.6) {\tiny1};
  \node at (2.8,-1.6) {\tiny1};
  \node at (2,-0.2) {\tiny1};
  \node at (2.45,-0.8) {\tiny2};
  \node at (2,-1.6) {\tiny3};
  \node at (1.55,-0.8) {\tiny4};
	
		\node[regular polygon,rotate=0,draw,regular polygon sides = 3] (t124) at (4,-0.72) {};
  \node[regular polygon,rotate=60,draw,regular polygon sides = 3,fill=green!40] (t234) at (4,-1.09) {};
  \node[regular polygon,rotate=0,draw,regular polygon sides = 3] (t346) at (3.68,-1.27) {};
  \node[regular polygon,rotate=0,draw,regular polygon sides = 3,fill=green!40] (t236) at (4.32,-1.27) {};
\node[shape=circle,minimum size = 5pt,inner sep=0.2pt] (n8) at (5,-0.9) {{\small +}};
	
  \node at (3.2,-1.6) {\tiny1};
  \node at (4.8,-1.6) {\tiny1};
  \node at (4,-0.2) {\tiny1};
  \node at (4.45,-0.8) {\tiny2};
  \node at (4,-1.6) {\tiny3};
  \node at (3.55,-0.8) {\tiny4};
	
  \node[regular polygon,rotate=0,draw,regular polygon sides = 3,fill=green!40] (t124) at (0,-2.72) {};
  \node[regular polygon,rotate=60,draw,regular polygon sides = 3] (t234) at (0,-3.09) {};
  \node[regular polygon,rotate=0,draw,regular polygon sides = 3,fill=green!40] (t346) at (-0.32,-3.27) {};
  \node[regular polygon,rotate=0,draw,regular polygon sides = 3] (t236) at (0.32,-3.27) {};
	\node[shape=circle,minimum size = 5pt,inner sep=0.2pt] (n8) at (1,-2.9) {{\small +}};
	\node[shape=circle,minimum size = 5pt,inner sep=0.2pt] (n8) at (-1,-2.9) {{\small +}};
  \node at (-0.8,-3.6) {\tiny1};
  \node at (0.8,-3.6) {\tiny1};
  \node at (0,-2.2) {\tiny1};
  \node at (0.45,-2.8) {\tiny2};
  \node at (-0,-3.6) {\tiny3};
  \node at (-0.45,-2.8) {\tiny4};
	
	\node[regular polygon,rotate=0,draw,regular polygon sides = 3,fill=green!40] (t124) at (2,-2.72) {};
  \node[regular polygon,rotate=60,draw,regular polygon sides = 3,fill=green!40] (t234) at (2,-3.09) {};
  \node[regular polygon,rotate=0,draw,regular polygon sides = 3] (t346) at (1.68,-3.27) {};
  \node[regular polygon,rotate=0,draw,regular polygon sides = 3] (t236) at (2.32,-3.27) {};
\node[shape=circle,minimum size = 5pt,inner sep=0.2pt] (n8) at (3,-2.9) {{\small +}};
	
  \node at (1.2,-3.6) {\tiny1};
  \node at (2.8,-3.6) {\tiny1};
  \node at (2,-2.2) {\tiny1};
  \node at (2.45,-2.8) {\tiny2};
  \node at (2,-3.6) {\tiny3};
  \node at (1.55,-2.8) {\tiny4};
	
		\node[regular polygon,rotate=0,draw,regular polygon sides = 3] (t124) at (4,-2.72) {};
  \node[regular polygon,rotate=60,draw,regular polygon sides = 3,fill=green!40] (t234) at (4,-3.09) {};
  \node[regular polygon,rotate=0,draw,regular polygon sides = 3,fill=green!40] (t346) at (3.68,-3.27) {};
  \node[regular polygon,rotate=0,draw,regular polygon sides = 3] (t236) at (4.32,-3.27) {};
\node[shape=circle,minimum size = 5pt,inner sep=0.2pt] (n8) at (5,-2.9) {{\small =}};
\node[shape=circle,minimum size = 5pt,inner sep=0.2pt] (n8) at (5.5,-2.9) {{\small 0}};
  \node at (3.2,-3.6) {\tiny1};
  \node at (4.8,-3.6) {\tiny1};
  \node at (4,-2.2) {\tiny1};
  \node at (4.45,-2.8) {\tiny2};
  \node at (4,-3.6) {\tiny3};
  \node at (3.55,-2.8) {\tiny4};
	
 \end{tikzpicture}
 \caption{Combinatorial representation for Equation \ref{rel3} }\label{figR3}
\end{figure}
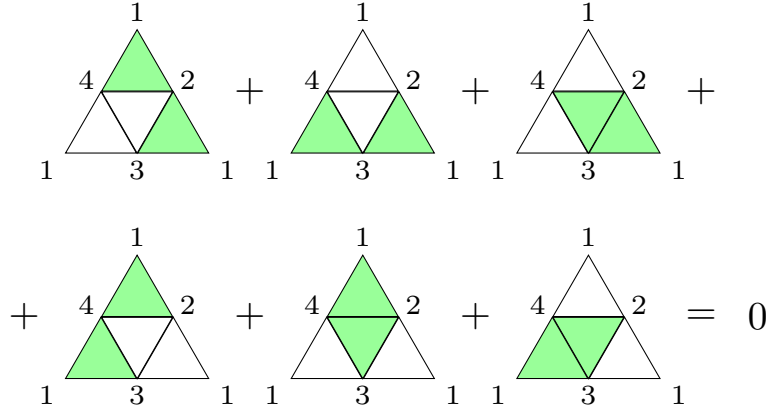
If $d\geq 3$ there are similar pictures for \ref{rel4}, \ref{rel5} and \ref{rel5}, but since in this paper we are only interested in the case $d=2$, we only present these two pictures. 
\end{example} 

\begin{remark} Notice that a more natural indexing set for the $\binom{n}{3}$ positions of a tensor in $V^{\otimes \binom{n}{3}}$ (i.e. $(i,j,k)$ with $1\leq i<j<k\leq n$),  is the set of set of hyperedges of $K_n^3$ (i.e. $\{i,j,k\}$ where $1\leq i,j,k\leq n$ with $i\neq j\neq k\neq i$). However, instead of using the notation $v_{\{i,j,k\}}$ we will use the convention $v_{i,j,k}=v_{i,k,j}=v_{j,i,k}=v_{k,i,j}=v_{j,k,i}=v_{k,j,i}$ for all $1\leq i<j<k\leq n$. 

\end{remark}

\section{Main Result} 

In this section we consider  the case $dim_k(V_2)=2$ and compute the dimension of $\Lambda^{S^3}_{V_2}[n]$ for all $n\geq 0$. In particular we show that $dim_k(\Lambda^{S^3}_{V_2}[6])=1$, which implies the existence and uniqueness of a determinant-like function $det^{S^3}$. We give an explicit description for $det^{S^3}$ using homogeneous $2$-partitions of the complete hypergraph $K_6^3$. 

First we need a technical result.
\begin{lemma}\label{trivEntries} Let $n\geq 4$ and take $\omega=\otimes_{1\leq i<j<k\leq n} (v_{i,j,k})\in \mathcal{G}_{\mathcal{B}_2}^{S^3}[n]\subseteq V_2^{\otimes \binom{n}{3}}$ (i.e. $v_{i,j,k}\in \{e_1,e_2\}$). Assume that there are at least $\binom{n-1}{2}+1$ entries equal to $e_1$ amongst the vectors $v_{i,j,k}$,   then $\hat{\omega}=0\in \Lambda^{S^3}_{V_2}[n]$. \label{lemma1}
\end{lemma}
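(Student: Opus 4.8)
The plan is to recast the statement in the language of $2$-partitions of $K_n^3$ and prove it by induction on $n$, by reduction to $K_{n-1}^3$.

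Under the bijection $\omega_{\mathcal{P}}\leftrightarrow\mathcal{P}$, the hypothesis says that $\mathcal{P}=(\mathcal{H}_1,\mathcal{H}_2)$ satisfies $k:=|E(\mathcal{H}_2)|\le\binom{n-1}{3}-1$ (this is exactly $|E(\mathcal{H}_1)|\ge\binom{n-1}{2}+1$, since $\binom n3=\binom{n-1}{3}+\binom{n-1}{2}$), and the claim is $\omega_{\mathcal{P}}\in\mathcal{E}^{S^3}_{V_2}[n]$. Two ingredients are needed. First, by Proposition~\ref{proprel}, $\mathcal{E}^{S^3}_{V_2}[n]$ is spanned by the relations \ref{rel1}, \ref{rel2}, \ref{rel3} placed in the four positions of a $4$-subset $\{x,y,z,t\}$, all other positions filled by basis vectors; in particular \ref{rel1} alone gives $\hat\omega_{\mathcal{P}}=0$ as soon as $\mathcal{H}_1$ contains all four faces of some $4$-subset. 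Second, by the Remark following Proposition~\ref{proprel}, any relation among tensors over $K_{n-1}^3$ lifts to $\mathcal{T}^{S^3}_{V_2}[n]$ after tensoring with a fixed choice of entries on the $\binom{n-1}{2}$ faces through a vertex $v$; hence if the restriction $\omega'$ of $\omega_{\mathcal{P}}$ to the $\binom{n-1}{3}$ faces avoiding $v$ lies in $\mathcal{E}^{S^3}_{V_2}[n-1]$, then $\omega_{\mathcal{P}}\in\mathcal{E}^{S^3}_{V_2}[n]$.

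The base case $n=4$ is immediate from \ref{rel1}, since $\binom32+1=\binom43$ forces all four faces of $K_4^3$ into $\mathcal{H}_1$. For the inductive step, the restriction $\omega'$ at a vertex $v$ has $k-d_2(v)$ entries equal to $e_2$, where $d_2(v)$ is the number of faces of $\mathcal{H}_2$ through $v$. By the inductive hypothesis --- applied to $\mathcal{H}_2$, or to $\mathcal{H}_1$ together with the $e_1\leftrightarrow e_2$ symmetry (which is valid in degree $n-1$) --- we get $\omega'\in\mathcal{E}^{S^3}_{V_2}[n-1]$ whenever $k-d_2(v)\le\binom{n-2}{3}-1$ or $k-d_2(v)\ge\binom{n-2}{2}+1$. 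If no vertex enjoyed this, then $k-\binom{n-2}{2}\le d_2(v)\le k-\binom{n-2}{3}$ for every $v$; summing over the $n$ vertices and using $\sum_v d_2(v)=3k$ gives $n\bigl(k-\binom{n-2}{2}\bigr)\le 3k\le n\bigl(k-\binom{n-2}{3}\bigr)$. For $n\ge8$ one has $\binom{n-2}{3}>\binom{n-2}{2}$, so the interval for $d_2(v)$ is empty, a contradiction; for $n=7$ it degenerates to $d_2(v)=k-10$ for all $v$, forcing $7(k-10)=3k$, which has no integer solution. Thus for every $n\ge7$ a suitable vertex exists and the induction closes.

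It remains to treat $n\in\{4,5,6\}$ --- the crucial case being $n=6$, since the main theorem requires $\dim_k\Lambda^{S^3}_{V_2}[6]=1$. There the same counting covers all partitions except the ``balanced'' ones with $k\in\{8,9\}$ (for $n=6$) and with $k=3$ (for $n=5$, which feed the restriction argument for $n=6$): for these the restriction to $K_{n-1}^3$ always lands in a graded piece not yet known to vanish. For each of these finitely many cases one argues by hand: choose a $4$-subset $\{x,y,z,t\}$ with exactly three faces (resp. exactly two) in $\mathcal{H}_1$, apply \ref{rel2} (resp. \ref{rel3}) to express $\hat\omega_{\mathcal{P}}$ as a signed sum of $\hat\omega_{\mathcal{P}'}$ with the $\mathcal{H}_2$-faces moved inside that $4$-subset, and check that each $\mathcal{P}'$ either contains a $4$-subset entirely in its own $\mathcal{H}_1$ (so $\hat\omega_{\mathcal{P}'}=0$ by \ref{rel1}) or restricts well to $K_{n-1}^3$. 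Organizing this finite case analysis so that it terminates --- that is, exhibiting the right order in which to process the balanced $2$-partitions of $K_6^3$ so that the \ref{rel2}/\ref{rel3} moves always reduce to already-handled configurations --- is the step I expect to be the main obstacle; the clean counting above reduces the entire lemma to this bounded verification (which could equally be done by computer, in the spirit of the MATLAB computations in the Appendix).
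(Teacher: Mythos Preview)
Your averaging argument is clean and does dispose of $n\ge 7$ nicely, but the proposal is incomplete precisely at the cases that matter. You explicitly flag the ``main obstacle'' --- the balanced partitions of $K_5^3$ with $k=3$ and of $K_6^3$ with $k\in\{8,9\}$ --- and then do not resolve it; but these are not a residual fringe, they are the whole content of the lemma for the paper's purposes (Corollaries~\ref{cor1} and~\ref{cor2} only need $n\le 7$). Saying one can ``argue by hand'' and that the moves \ref{rel2}/\ref{rel3} should eventually terminate is not a proof: the relation \ref{rel2} moves a single $e_2$ to another face of the same tetrahedron, so it does not change $k$, and there is no obvious monovariant that guarantees you reach an all-$e_1$ tetrahedron. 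Your own induction also relies on the full $e_1\leftrightarrow e_2$-symmetric statement at level $n-1$, so until $n=5,6$ are actually established nothing above them is proved either.

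The paper avoids this impasse by a different organization of the same induction. Instead of averaging over all vertices, it fixes the vertex $n$ and works with the \emph{link}: the $n$th slice $\{v_{i,j,n}\}_{1\le i<j\le n-1}$ is a $2$-colouring of the edges of $K_{n-1}$, and having more than $n-2$ entries equal to $e_1$ there forces an $e_1$-cycle in that graph. A second induction on the cycle length $q$ then uses \ref{rel2} (for $q=3$) and \ref{rel3} (for $q\ge 4$) in a controlled way: each move either pushes an $e_1$ down out of the $n$th slice, or creates an all-$e_1$ tetrahedron, or shortens the cycle. This gives a genuine monovariant (number of $e_1$'s in the top slice, then minimal cycle length) that makes the process terminate uniformly for every $n\ge 4$, with no separate small-$n$ analysis. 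That cycle-in-the-link idea is the missing ingredient in your argument.
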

\begin{proof}  
We will use induction. When $n=4$, we have that $\binom{4-1}{2}+1=4$ and so all entries in $\omega$ are equal to $e_1$, which means that $\omega\in \mathcal{E}^{S^3}_{V_2}[4]$, and so  $\hat{\omega}=0\in  \Lambda^{S^3}_{V_2}[4]$.

 The plan is to show that we can change $\omega$ with a sum of elements 
$\omega_p\in \mathcal{G}_{\mathcal{B}_d}^{S^3}[n]$,  such that each $\omega_p$ is either zero in $\Lambda^{S^3}_{V_2}[n]$, or it has at most $n-2$ entries equal to $e_1$ in the $n^{th}$ slice (i.e. among the entries $\{v_{i,j,n}\}_{1\leq i< j\leq n-1}$). And so, each of these elements $\omega_p$ has at least $\binom{n-1}{2}+1-(n-2)=\binom{n-2}{2}+1$ entries equal to $e_1$ in the first $n-1$ slices of $\omega_p$. By induction, we would get that $\hat{\omega}=0\in \Lambda^{S^3}_{V_2}[n]$.

Consider the $n^{th}$-slice of $\omega$, which consists of all the entries in the positions $(i,j,n)$ where $1\leq i<j\leq n-1$.  First notice that if there are more then $n-2$ entries equal to $e_1$ in the $n^{th}$ slice, then there exists a cycle $(i_1,i_2,\dots, i_q)$ where $1\leq q\leq n-1$, $1\leq i_s\leq n-1$, and $i_s\neq i_t$ for $s\neq t$ such that  $v_{i_1,i_2,n}=v_{i_2,i_3,n}=\dots v_{i_{q-1},i_q,n}=v_{i_q,i_1,n}=e_1$. In such a situation we will say that the $n^{th}$ slice has an $e_1$-cycle of length $q$. We will do a second induction over $q$. 

Assume that $n>4$ and $q=3$ then we have $v_{i_1,i_2,n}=v_{i_2,i_3,n}=v_{i_3,i_1,n}=e_1$ for some distinct integers $1\leq i_1,i_2,i_3\leq n-1$ (i.e. we have an $e_1$-cycle of length $3$). If $v_{i_1,i_2,i_3}=e_1$ then $\omega\in \mathcal{E}^{S^3}_{V_2}[n]$ and so $\hat{\omega}=0$. If $v_{i_1,i_2,i_3}=e_2$ then  using identity \ref{rel2} on the positions $(i_1,i_2,i_3)$, $(i_1,i_2,n)$, $(i_1,i_3,n)$ and $(i_2,i_3,n)$ we can move one of the $e_1$ entries from the $n^{th}$ slice to a lower slice which decrease the numbers of entries equal to $e_1$ in the $n^{th}$ slice. More precisely, looking only at the entries $ \begin{pmatrix}
 v_{i_1,i_2,i_3}&\otimes 	 \\
v_{i_1,i_2,n}& v_{i_1,i_3,n}\\
      & v_{i_2,i_3,n}\\
\end{pmatrix}$,  we have the identity
\begin{eqnarray*}
\begin{pmatrix}
 e_1&\wedge	 \\
 e_1& e_1\\
      & e_2\\
\end{pmatrix}+
\begin{pmatrix}
 e_1&\wedge		 \\
 e_1& e_2\\
      & e_1\\
\end{pmatrix}+
\begin{pmatrix}
 e_1&\wedge	 	 \\
 e_2& e_1\\
      & e_1\\
\end{pmatrix}+
\boxed{\begin{pmatrix}
 e_2&\wedge		 \\
 e_1& e_1\\
      & e_1\\
\end{pmatrix}}
=0, 
\end{eqnarray*}
where the boxed tensor matrix corresponds to our initial element. Notice that all the other tensor matrix in the above expression have the entry in the position $(i_1,i_2,i_3)$ equal to $e_1$, and so less entries equal to $e_1$ in the $n^{th}$ slice.

Next suppose that $q=4$, i.e. we have an $e_1$-cycle $(i_1,i_2,i_3,i_4)$ of length $4$, which means that  $v_{i_1,i_2,n}=v_{i_2,i_3,n}=v_{i_3,i_4,n}=v_{i_4,i_1,n}=e_1$, and $v_{i_1,i_3,n}=v_{i_2,i_4,n}=e_2$ (if any of these two entries is $e_1$ then we have a shorter $e_1$-cycle in the $n^{th}$ slice). If $v_{i_1,i_2,i_3}=v_{i_2,i_3,i_4}=v_{i_3,i_4,i_1}=v_{i_4,i_1,i_2}=e_1$,  then $\omega\in \mathcal{E}^{S^3}_{V_2}[n]$ and so $\hat{\omega}$ is trivial. 

So, we can assume that one of the  entries $v_{i_1,i_2,i_3}$, $v_{i_2,i_3,i_4}$, $v_{i_3,i_4,i_1}$ or $v_{i_4,i_1,i_2}$ is equal to $e_2$.  Then we will use identity \ref{rel3} to either move one of the $e_1$ entries from the $n^{th}$ slice to a lower slice, or to get a $e_1$-cycle of length $3$ inside the $n^{th}$ slice (and so we reduce our problem  to the case $q=3$). Indeed, for example if $v_{i_1,i_2,i_3}=e_2$  then looking only at the entries $ \begin{pmatrix}
 v_{i_1,i_2,i_3}&\otimes 	 \\
v_{i_1,i_2,n}& v_{i_1,i_3,n}\\
      & v_{i_2,i_3,n}\\
\end{pmatrix}$, from equation \ref{rel3} we have the following identity
\begin{eqnarray*}
\begin{pmatrix}
 e_1&\wedge	 \\
 e_1& e_2\\
      & e_2\\
\end{pmatrix}+
\begin{pmatrix}
 e_1&\wedge		 \\
 e_2& e_1\\
      & e_2\\
\end{pmatrix}+
\begin{pmatrix}
 e_1&\wedge	 	 \\
 e_2& e_2\\
      & e_1\\
\end{pmatrix}+
\begin{pmatrix}
 e_2&\wedge		 \\
 e_1& e_1\\
      & e_2\\
\end{pmatrix}
+
\boxed{\begin{pmatrix}
 e_2&\wedge		 \\
 e_1& e_2\\
      & e_1\\
\end{pmatrix}}+
\begin{pmatrix}
 e_2&\wedge	 	 \\
 e_2& e_1\\
      & e_1\\
\end{pmatrix}=0, 
\end{eqnarray*}
where the boxed tensor matrix corresponds to our initial element. Notice that all the other tensor matrix in the above expression have either the entry in the position $(i_1,i_2,i_3)$ equal to $e_1$ (and so less entries equal to $e_1$ in the $n^{th}$ slice), or the entry in the position $(i_1,i_3,n)$ equal to $e_1$ (and so the $e_1$-cycle $(i_1,i_3,i_4)$ of length $3$ in the $n^{th}$ slice). The other cases are similar. 

Finally, suppose that we have an  $e_1$-cycle $(i_1,i_2,...,i_q)$ in the $n^{th}$ slice with $q\geq 5$, which means that 
 $v_{i_1,i_2,n}=v_{i_2,i_3,n}=v_{i_3,i_4,n}=v_{i_4,i_5,n}=...=v_{i_{q},i_1,n}=e_1$ and $v_{i_1,i_3,n}=v_{i_2,i_4,n}=v_{i_4,i_1,n}=e_2$ (if any of these three are equal to $e_1$ then there is a shorter $e_1$-cycle in the $n^{th}$ slice). 

If $v_{i_1,i_2,i_3}=e_2$ or $v_{i_2,i_3,i_4}=e_2$ then, just like above, we can use relation \ref{rel3} to either move an $e_1$ to a lower slice, or to get an $e_1$-cycle of length at most $q-1$ in the $n^{th}$ slice. So, we can assume that  $v_{i_1,i_2,i_3}=v_{i_2,i_3,i_4}=e_1$. 

If $v_{i_1,i_3,i_4}=e_2$ then we can use relation \ref{rel2} to either move an $e_1$ to a lower slice, or to get a shorter $e_1$-cycle in the $n^{th}$ slice. Indeed, looking only at the entries $ \begin{pmatrix}
 v_{i_1,i_3,i_4}&\otimes 	 \\
v_{i_1,i_3,n}& v_{i_1,i_4,n}\\
      & v_{i_3,i_4,n}\\
\end{pmatrix}$, 
we have the following identity
\begin{eqnarray*}
\boxed{\begin{pmatrix}
 e_2&\wedge	 	 \\
 e_2& e_2\\
      & e_1\\
\end{pmatrix}}+
\begin{pmatrix}
 e_2&\wedge		 \\
 e_2& e_1\\
      & e_2\\
\end{pmatrix}+
\begin{pmatrix}
 e_2&\wedge		 \\
 e_1& e_2\\
      & e_2\\
\end{pmatrix}+
\begin{pmatrix}
 e_1&\wedge	 	 \\
 e_2& e_2\\
      & e_2\\
\end{pmatrix}=0,
\end{eqnarray*}
where the boxed tensor matrix corresponds to our initial element. Notice that all the other tensor matrix in the above expression have either the entry in the position $(i_1,i_3,i_4)$  equal to $e_1$ (and so less entries equal to $e_1$ in the $n^{th}$ slice), or the entry in the position $(i_1,i_3,n)$ or $(i_1,i_4,n)$ equal to $e_1$ (and so a shorter $e_1$-cycle  inside the $n^{th}$ slice). The  case $v_{i_1,i_2,i_4}=e_2$ is similar.  

To summarize, we  have that  $v_{i_1,i_2,i_3}=v_{i_1,i_2,i_4}=v_{i_1,i_3,i_4}=v_{i_2,i_3,i_4}=e_1$ and so $\hat{\omega}=0$. 
By induction we get our claim. 
\end{proof}
The following is the $S^3$-version of first statement of Proposition \ref{prop1}. 
\begin{corollary} If $dim_k(V_2)=2$ then $dim_k(\Lambda^{S^3}_{V_2}[n])=0$ for all $n\geq 7$. \label{cor1}
\end{corollary}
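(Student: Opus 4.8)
The plan is to deduce the statement directly from Lemma \ref{lemma1}, using that $\mathcal{G}_{\mathcal{B}_2}^{S^3}[n]$ spans after passing to the quotient, a counting argument, and the symmetry exchanging $e_1$ and $e_2$. First I would recall that $\mathcal{G}_{\mathcal{B}_2}^{S^3}[n]$ is a basis of $\mathcal{T}^{S^3}_{V_2}[n]=V_2^{\otimes\binom{n}{3}}$, so its image spans $\Lambda^{S^3}_{V_2}[n]$; hence it suffices to show $\hat{\omega}=0$ for every $\omega=\otimes_{1\leq i<j<k\leq n}(v_{i,j,k})\in\mathcal{G}_{\mathcal{B}_2}^{S^3}[n]$ whenever $n\geq 7$.

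Next I would observe that Lemma \ref{lemma1} holds verbatim with the roles of $e_1$ and $e_2$ interchanged: the subspace $\mathcal{E}^{S^3}_{V_2}[n]$ is generated by simple tensors satisfying a condition (existence of $x<y<z<t$ with $v_{x,y,z}=v_{x,y,t}=v_{x,z,t}=v_{y,z,t}$) that is invariant under every linear automorphism of $V_2$, in particular under the one swapping $e_1\leftrightarrow e_2$. Consequently, if $\omega$ has at least $\binom{n-1}{2}+1$ entries equal to $e_2$ then also $\hat{\omega}=0$. Now I count: $\omega$ has exactly $\binom{n}{3}$ entries, each equal to $e_1$ or $e_2$. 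If neither value occurred at least $\binom{n-1}{2}+1$ times, then $\binom{n}{3}\leq 2\binom{n-1}{2}$, i.e. $\tfrac{n(n-1)(n-2)}{6}\leq (n-1)(n-2)$, which forces $n\leq 6$. Therefore for $n\geq 7$ at least one of $e_1,e_2$ occurs more than $\binom{n-1}{2}$ times among the $v_{i,j,k}$, and the appropriate version of Lemma \ref{lemma1} gives $\hat{\omega}=0$. Since these elements span, $\Lambda^{S^3}_{V_2}[n]=0$.

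I expect no real obstacle here, since all the substance is contained in Lemma \ref{lemma1}; the only point worth checking is the elementary inequality $\binom{n}{3}>2\binom{n-1}{2}$, which holds precisely for $n\geq 7$. This also explains why the method gives vanishing exactly in this range and why $n=6$ is the borderline case treated separately in the main result.
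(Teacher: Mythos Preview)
Your argument is correct and is essentially the same as the paper's: both use the pigeonhole inequality $\binom{n}{3}>2\binom{n-1}{2}$ for $n\geq 7$ to guarantee that one of $e_1,e_2$ appears at least $\binom{n-1}{2}+1$ times, and then invoke Lemma~\ref{lemma1}. The only difference is that you spell out explicitly why one may interchange $e_1$ and $e_2$ and why it suffices to check on the generators $\mathcal{G}_{\mathcal{B}_2}^{S^3}[n]$, whereas the paper simply writes ``without loss of generality''.
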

\begin{proof}  Take $n\geq 7$ and consider $\omega=\otimes_{1\leq i<j<k\leq n} (v_{i,j,k})\in \mathcal{G}_{\mathcal{B}_d}^{S^3}[n]\subseteq V_2^{\otimes \binom{n}{3}}$ (i.e. $v_{i,j,k}\in \{e_1,e_2\}$).   Since $n \geq 7$ we have that $\binom{n}{3}> 2\binom{n-1}{2}$ and so, without loss of generality,  we may assume that $\omega$ has at least $\binom{n-1}{2}+1$ entries equal to $e_1$.    From Lemma \ref{lemma1} we know that $\hat{\omega}=0\in \Lambda^{S^3}_{V_2}[n]$ is trivial. 
\end{proof}

\begin{corollary} \label{cor2} Let  $\omega=\otimes_{1\leq i<j<k\leq 6} (v_{i,j,k})\in \mathcal{G}_{\mathcal{B}_2}^{S^3}[6]\subseteq V_2^{\otimes 20}$, and  assume that there are at least $11$ entries equal to $e_1$ amongst the $v_{i,j,k}$,   then $\hat{\omega}=0\in \Lambda^{S^3}_{V_2}[6]$. 
\end{corollary}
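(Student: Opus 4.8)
The plan is to recognize that this statement is precisely the instance $n=6$ of Lemma \ref{lemma1}. First I would compute the threshold appearing in that lemma for $n=6$: since $\binom{n-1}{2}+1=\binom{5}{2}+1=10+1=11$, the hypothesis ``there are at least $\binom{n-1}{2}+1$ entries equal to $e_1$'' becomes exactly ``there are at least $11$ entries equal to $e_1$''. Then I would simply invoke Lemma \ref{lemma1} with this value of $n$ to conclude $\hat{\omega}=0\in\Lambda^{S^3}_{V_2}[6]$.

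There is essentially no obstacle here; all of the real work is contained in the double induction already carried out in the proof of Lemma \ref{lemma1} (the outer induction on $n$, and the inner induction on the length $q$ of an $e_1$-cycle in the top slice). The only point worth keeping in mind is how the base case is consumed: for $n=6$ the reduction produces terms with at least $\binom{4}{2}+1=7$ entries equal to $e_1$ in the first five slices, which feeds into the $n=5$ case ($\binom{3}{2}+1=4$ entries in the first four slices), which in turn feeds into the $n=4$ case, where all four relevant entries are forced to be $e_1$ and the element lies in $\mathcal{E}^{S^3}_{V_2}[4]$ by definition, hence maps to $0$.

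If one insisted on a self-contained argument one could transcribe the proof of Lemma \ref{lemma1} with $n=6$ held fixed throughout, but this would only reproduce the general argument verbatim; stating the result as a corollary is the natural choice. The reason it is isolated here as a separate statement is that this particular numerical case $\binom{5}{2}+1=11$ is exactly what will be needed in the subsequent computation of $dim_k(\Lambda^{S^3}_{V_2}[6])$, where one wants to discard all basis elements $\omega_{\mathcal P}$ whose associated $2$-partition $\mathcal P$ of $K_6^3$ is sufficiently unbalanced.
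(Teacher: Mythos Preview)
Your proof is correct and is exactly the paper's own approach: the paper's proof of this corollary is the single sentence ``It follows directly from Lemma \ref{lemma1} for $n=6$,'' and your computation $\binom{5}{2}+1=11$ makes the specialization explicit.
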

\begin{proof}
It follows directly from Lemma \ref{lemma1} for $n=6$. 
\end{proof}

\begin{remark}
Notice that if $\omega\in \mathcal{G}_{\mathcal{B}_2}^{S^3}[6]$ such that  $\hat{\omega}\neq 0\in \Lambda^{S^3}_{V_2}[6]$,  then by Corollary \ref{cor2} the corresponding partition $\mathcal{P}_{\omega}=(\mathcal{H}_1,\mathcal{H}_2)$ of $K_6^3$ must be homogeneous (i.e. $\omega$ has ten entries equal to $e_1$ and ten entries equal to $e_2$). 
\end{remark}

\begin{lemma} (i) There is an action of the symmetric group $S_{n}$ on $\Lambda^{S^3}_{V_d}[n]$ given by 
$$\sigma\cdot \wedge^{S^3}_{1\leq i<j<k\leq n}(v_{i,j,k})=\wedge^{S^3}_{1\leq i<j<k\leq n}(v_{\sigma^{-1}(i),\sigma^{-1}(j),\sigma^{-1}(k)}).$$\\
(ii) There is an action of the group $GL_d(k)$ on $\Lambda^{S^3}_{V_d}[n]$ given by 
$$T\cdot \wedge^{S^3}_{1\leq i<j<k\leq n}(v_{i,j,k})=\wedge^{S^3}_{1\leq i<j<k\leq n}(T(v_{i,j,k})).$$
(iii) If $\mathcal{P}$ is a homogeneous $2$-partition of $K_6^3$ and $T\in GL_2(k)$ then 
$$T\cdot \widehat{\omega_{\mathcal{P}}}=det(T)^{10}\widehat{\omega_{\mathcal{P}}}.$$
In particular there exist  an action of the group $S_6\times S_2$ on $\Lambda^{S^3}_{V_2}[6]$, where the $S_6$ action is the one described in 1), and $S_2$ is the subgroup on $GL_2(k)$ generated by $\tau:V_2\to V_2$, $\tau(e_1)=e_2$ and $\tau(e_2)=e_1$. \label{lemma45}
\end{lemma}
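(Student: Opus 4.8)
The plan is to verify each of the three claims by direct computation, reducing everything to the defining relation of $\mathcal{E}^{S^3}_V$. For part (i), I would first check that the assignment $\sigma\cdot\omega=\wedge^{S^3}_{1\leq i<j<k\leq n}(v_{\sigma^{-1}(i),\sigma^{-1}(j),\sigma^{-1}(k)})$ is well-defined on $\mathcal{T}^{S^3}_V[n]=V^{\otimes\binom{n}{3}}$: it is just the permutation of tensor slots induced by the $S_n$-action on the set of $3$-subsets of $\{1,\dots,n\}$, hence linear and an $S_n$-action on $\mathcal{T}^{S^3}_V[n]$ (the verification $(\sigma\tau)\cdot\omega=\sigma\cdot(\tau\cdot\omega)$ is the standard bookkeeping with $(\sigma\tau)^{-1}=\tau^{-1}\sigma^{-1}$). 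Then I would show it descends to the quotient $\Lambda^{S^3}_V[n]$ by checking that it preserves $\mathcal{E}^{S^3}_V[n]$: if $\omega$ has $v_{x,y,z}=v_{x,y,t}=v_{x,z,t}=v_{y,z,t}$ for some $x<y<z<t$, then after relabeling by $\sigma$ the four slots indexed by the $3$-subsets of $\{\sigma(x),\sigma(y),\sigma(z),\sigma(t)\}$ carry the same common value, so $\sigma\cdot\omega\in\mathcal{E}^{S^3}_V[n]$ as well. Hence the action passes to $\Lambda^{S^3}_V[n]$.

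For part (ii), the map $T\cdot\omega=\wedge^{S^3}(T(v_{i,j,k}))$ is induced by $T^{\otimes\binom{n}{3}}$ on $\mathcal{T}^{S^3}_V[n]$, which is linear and satisfies $(TS)^{\otimes N}=T^{\otimes N}S^{\otimes N}$, so it is a $GL_d(k)$-action on $\mathcal{T}^{S^3}_V[n]$. To see it descends, note that if $v_{x,y,z}=v_{x,y,t}=v_{x,z,t}=v_{y,z,t}=u$ then after applying $T$ these four entries all equal $T(u)$, so $\mathcal{E}^{S^3}_V[n]$ is preserved — but here one must be slightly careful, since a general simple tensor generating $\mathcal{E}^{S^3}_V[n]$ has arbitrary entries elsewhere, and $T$ of those entries need not lie in the basis; however $\mathcal{E}^{S^3}_V[n]$ is by definition spanned by simple tensors whose only constraint is the coincidence of those four entries, and that constraint is clearly stable under applying $T$ to every slot, so $T^{\otimes\binom{n}{3}}(\mathcal{E}^{S^3}_V[n])\subseteq\mathcal{E}^{S^3}_V[n]$. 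The two actions are compatible (they commute, since slot-permutation and entrywise linear maps always commute), giving the combined $S_n\times GL_d(k)$ action.

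Part (iii) is the substantive point. By Corollary \ref{cor2} (and the Remark following it), any $\omega_{\mathcal{P}}$ with $\mathcal{P}$ homogeneous lives in a space where the only nonzero generators come from homogeneous partitions, and the main theorem (stated in the introduction) gives $dim_k(\Lambda^{S^3}_{V_2}[6])=1$. So $\Lambda^{S^3}_{V_2}[6]\cong k$ and $GL_2(k)$ acts on this one-dimensional space by a character $\chi:GL_2(k)\to k^\times$. Since any $T\in GL_2(k)$ is a product of elementary matrices and diagonal matrices, and $\chi$ is a polynomial character of $GL_2(k)$, it must be a power of the determinant: $\chi(T)=det(T)^m$ for some $m\geq 0$. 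To pin down $m$, I would compute $T\cdot\widehat{\omega_{\mathcal{P}}}$ for $T=\lambda\,\mathrm{Id}$: then $T(v_{i,j,k})=\lambda v_{i,j,k}$ for all $20$ entries, so $T\cdot\widehat{\omega_{\mathcal{P}}}=\lambda^{20}\widehat{\omega_{\mathcal{P}}}$, while $det(\lambda\,\mathrm{Id})^m=\lambda^{2m}$; hence $2m=20$, i.e. $m=10$, which yields $T\cdot\widehat{\omega_{\mathcal{P}}}=det(T)^{10}\widehat{\omega_{\mathcal{P}}}$. Alternatively, and more self-containedly, one checks it on a spanning set of $GL_2(k)$: for $\tau$ (the swap) one sees directly that $\tau\cdot\widehat{\omega_{\mathcal{P}}}=\pm\widehat{\omega_{\mathcal{P}}}$ and $det(\tau)^{10}=(-1)^{10}=1$, for a transvection $e_1\mapsto e_1, e_2\mapsto e_2+\mu e_1$ one expands $T(v_{i,j,k})$ and uses the relations \ref{rel1}--\ref{rel3} to collapse the extra terms (this is where Lemma \ref{lemma1}/homogeneity does the work, since the extra terms have $\geq 11$ entries equal to $e_1$), and for a scaling $e_2\mapsto\nu e_2$ one gets the factor $\nu^{(\#\text{of }e_2\text{ entries})}=\nu^{10}=det(T)^{10}$. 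Finally the last sentence follows formally: combining (i) and the $m=10$ computation with $\tau\in GL_2(k)$ satisfying $det(\tau)^{10}=1$, the subgroup $\langle\tau\rangle\cong S_2$ acts trivially up to the sign which is $+1$, so together with the $S_6$-action of (i) we obtain an $S_6\times S_2$-action on $\Lambda^{S^3}_{V_2}[6]$. I expect the main obstacle to be the transvection case in the "spanning set" approach — making precise that every correction term produced by expanding a sheared basis vector really does acquire enough $e_1$'s (or enough repetitions) to vanish by Corollary \ref{cor2} — which is why the cleaner route is the character argument together with the single scalar-matrix computation $T=\lambda\,\mathrm{Id}$.
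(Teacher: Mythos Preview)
Your treatment of (i) and (ii) is correct and in fact more explicit than the paper, which simply says these follow directly from the definition of $\Lambda^{S^3}_{V_d}[n]$.

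For (iii), your primary ``character'' argument has a circularity problem. You invoke $\dim_k(\Lambda^{S^3}_{V_2}[6])=1$, but in the paper this is Corollary~\ref{dim1}, established \emph{after} Lemma~\ref{lemma45}; moreover the Appendix proof of existence of $det^{S^3}$ explicitly relies on Lemma~\ref{lemma45}(iii). So within the paper's logical flow you cannot assume the space is one-dimensional here. (Even granting the MATLAB proof of $\dim=1$ as an independent route, you would still owe the reader the remark that you are inverting the order of dependence.)

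Your alternative approach is essentially the paper's own argument, and you overestimate its difficulty. The paper reduces to generators of $GL_2(k)$, taking a diagonal scaling $T_1:e_1\mapsto\lambda e_1,\ e_2\mapsto e_2$ and a transvection $T_2:e_1\mapsto e_1,\ e_2\mapsto e_2+\lambda e_1$. For $T_1$ homogeneity gives exactly ten $e_1$-entries, so $T_1\cdot\widehat{\omega_{\mathcal P}}=\lambda^{10}\widehat{\omega_{\mathcal P}}$. For $T_2$ one expands linearly: $T_2\cdot\widehat{\omega_{\mathcal P}}=\sum_j \lambda^{k_j}\widehat{\omega_{\mathcal P^j}}$ where each $\mathcal P^j$ is obtained from $\mathcal P$ by moving some hyperedges from $\mathcal H_2$ to $\mathcal H_1$. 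The only homogeneous term is $\mathcal P$ itself (with $k_j=0$); every other $\mathcal P^j$ has $\geq 11$ entries equal to $e_1$ and hence vanishes by Corollary~\ref{cor2}. That is the whole argument --- the transvection case is clean, not an obstacle.

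One small correction: your separate treatment of the swap $\tau$ is both unnecessary (diagonals together with upper and lower transvections already generate $GL_2(k)$) and unjustified as written. The claim that ``$\tau\cdot\widehat{\omega_{\mathcal P}}=\pm\widehat{\omega_{\mathcal P}}$ directly'' is not obvious: $\tau\cdot\omega_{(\mathcal H_1,\mathcal H_2)}=\omega_{(\mathcal H_2,\mathcal H_1)}$, and identifying this with $\widehat{\omega_{\mathcal P}}$ in the quotient is precisely the content of (iii) for $T=\tau$, not something you can read off before proving it.
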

\begin{proof} The first two statements follow directly from the definition of $\Lambda^{S^3}_{V_d}[n]$. 
For  statement $(iii)$ first notice that it is enough to check it for diagonal and elementary transformations. Take $T_1, T_2:V_2\to V_2$ 
\[T_1(e_s)=\begin{cases}
		\lambda e_1 \ \ \ \ s=1\\
		e_2 \ \ \ \ \ \ s=2,
\end{cases}\]
\[T_2(e_s)=\begin{cases}
		e_1 \ \ \ \ \ \ \ \ \ \ \ s=i \\
		e_2+\lambda e_1 \ \ \ \ s=2,
	\end{cases}\]
and consider $\mathcal{P}=(\mathcal{H}_1,\mathcal{H}_2)$ a homogeneous partition of  $K_6^3$. 
Then $$T_1\cdot \widehat{\omega_{\mathcal{P}}}=\lambda^{10}\widehat{\omega_{\mathcal{P}}}=det(T_1)^{10}\widehat{\omega_{\mathcal{P}}}$$ where the first equality is true because the tensor product is linear in each component, and because there are exactly ten  entries equal to $e_1$ in $\omega_{\mathcal{P}}$. 

Next we have $$T_2\cdot  \widehat{\omega_{\mathcal{P}}}=\sum_{j=1}^{2^{10}} \lambda^{k_j} \widehat{\omega_{\mathcal{P}^j}},$$ where the sum is taken over all $2$-partitions $\mathcal{P}^j=(\mathcal{H}^j_1,\mathcal{H}^j_2)$ of $K_6^3$ with the property that  
$E(\mathcal{H}_1)\subseteq E(\mathcal{H}^j_1)$ and $k_j=\vert E(\mathcal{H}^j_1)\vert -10$. Obviously the only homogeneous  partition among the $\mathcal{P}^j$'s is our initial partition $\mathcal{P}$, and so because of Corollary \ref{cor2} we get 
$$T_2\cdot \widehat{\omega_{\mathcal{H}}}=\widehat{\omega_{\mathcal{H}}}=det(T_2)^{10}\widehat{\omega_{\mathcal{H}}},$$
which completes our proof. 
\end{proof}


As mentioned above, if $\omega\in\mathcal{G}_{\mathcal{B}_2}^{S^3}[6]$ whose image in $\Lambda^{S^3}_{V_2}[6]$ is non-zero then its corresponding partition $\mathcal{P}_{\omega}$ must be homogeneous. There are exactly ${20 \choose 10}=\num{184756}$ homogeneous $2$-partitions of $K_6^3$, but not all of them give nonzero elements in $\Lambda^{S^3}_{V_2}[6]$.

For example, any partition $\mathcal{P}=(\mathcal{H}_1,\mathcal{H}_2)$ for which we can find $1\leq x<y<z<t\leq 6$ such that $\{x,y,z\}$, $\{x,y,t\}$, $\{x,z,t\}$ and $\{y,z,t\}\in E(\mathcal{H}_1)$ has the property that $ \widehat{\omega_{\mathcal{P}}}=0\in \Lambda^{S^3}_{V_2}[6]$. More generally we have the following. 

\begin{lemma}
Let $\mathcal{P}=(\mathcal{H}_1,\mathcal{H}_2)$ be a homogeneous $2$-partition of $K_6^3$. If $\mathcal{P}$ satisfies one of the  following conditions then $\widehat{\omega_{\mathcal{P}}}=0\in \Lambda^{S^3}_{V_2}[6]$. \\
(i)  There exist four distinct integers $1\leq x,y,z,t\leq 6$  and $1\leq i\leq 2$ such that $\{x,y,z\}$, $\{x,y,t\}$, $\{x,z,t\}$ and $\{y,z,t\}\in E(\mathcal{H}_{i})$.\\
(ii) There exist five distinct integers  $1\leq x,y,z,t,u\leq 6$  and $1\leq i\leq 2$ such that $\{x,y,z\}$, $\{x,y,t\}$, $\{x,z,t\}$, $\{y,z,u\}$, $\{y,t,u\}$ and $\{z,t,u\}\in E(\mathcal{H}_i)$.\\
(iii) There exist six distinct integers $1\leq x,y,z,t,u,v\leq 6$  and $1\leq i\leq 2$ such that $\{x,y,u\}$, $\{x,y,v\}$, 
$\{y,z,u\}$, $\{x,z,u\}$, $\{x,t,v\}$,  $\{y,t,v\}$,  $\{y,z,t\}$, and $\{x,z,t\}\in E(\mathcal{H}_i)$.\\
(iv) There exist six distinct integers $1\leq x,y,z,t,u,v\leq 6$  and $1\leq i\leq 2$ such that $\{x,y,z\}$, $\{x,y,t\}$, $\{x,z,t\}$, $\{x,y,u\}$, $\{y,t,u\}$, $\{x,y,v\}$, $\{x,u,v\}$, $\{y,z,v\}$, $\{z,t,v\}$  and $\{t,u,v\}\in E(\mathcal{H}_i)$. \label{trivpar}
\end{lemma}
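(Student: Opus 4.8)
The plan is to reduce each of the four conditions to an application of Lemma \ref{lemma1} (equivalently Corollary \ref{cor2}), by using the relations \ref{rel1}, \ref{rel2}, \ref{rel3} of Proposition \ref{proprel} to rewrite $\widehat{\omega_{\mathcal{P}}}$ as a linear combination of elements $\widehat{\omega_{\mathcal{Q}}}$, each of which either is manifestly zero because $\mathcal{Q}$ contains a face-configuration forcing membership in $\mathcal{E}^{S^3}_{V_2}[6]$, or has at least $11$ entries equal to $e_1$ (respectively $e_2$) and hence vanishes by Corollary \ref{cor2}. Condition (i) is immediate: if all four faces of the tetrahedron on $\{x,y,z,t\}$ lie in $\mathcal{H}_i$, then by definition $\omega_{\mathcal{P}}\in\mathcal{E}^{S^3}_{V_2}[6]$, so $\widehat{\omega_{\mathcal{P}}}=0$; this is exactly relation \ref{rel1} (after relabeling the basis so that $e_i=e_1$, or just noting the definition is symmetric in $e_1,e_2$).

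For conditions (ii), (iii), (iv) I would single out the relevant entries and apply relation \ref{rel2} (the four-term relation) at a carefully chosen position. Take (ii) as the model case: the six faces listed use the vertices $\{x,y,z,t\}$ and $\{y,z,t,u\}$, sharing the triangle $\{y,z,t\}$ — but note $\{y,z,t\}$ is \emph{not} in the list, so it is the one face of $K_6^3$ near these vertices that is allowed to be in either part. If $\{y,z,t\}\in E(\mathcal{H}_i)$ as well, then $\{x,y,z\},\{x,y,t\},\{x,z,t\},\{y,z,t\}\in E(\mathcal{H}_i)$ and we are in case (i), done. So assume $\{y,z,t\}\in E(\mathcal{H}_{3-i})$; then apply relation \ref{rel2} on the positions $(y,z,t)$, $(x,y,z)$ or rather on the tetrahedron $\{x,y,z,t\}$ — more precisely, use \ref{rel2} with the "outlier" vertex being whichever one makes the boxed term our $\omega_{\mathcal{P}}$: here $e_j=e_{3-i}$ sits in position $\{y,z,t\}$ and $e_i$ sits in the other three faces of the tetrahedron $\{x,y,z,t\}$. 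Relation \ref{rel2} then writes $\widehat{\omega_{\mathcal{P}}}$ as minus the sum of three terms, in each of which one of $\{x,y,z\},\{x,y,t\},\{x,z,t\}$ has been switched to $e_{3-i}$ and $\{y,z,t\}$ has been switched to $e_i$. In each such term, the new partition has $\{y,z,t\},\{y,z,u\},\{y,t,u\},\{z,t,u\}\in E(\mathcal{H}_i)$ — the tetrahedron on $\{y,z,t,u\}$ is now monochromatic — so each term vanishes by case (i). This proves (ii).

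Conditions (iii) and (iv) work the same way, only with more bookkeeping: in each I would find a tetrahedron three of whose faces lie in $E(\mathcal{H}_i)$ with the fourth face free, split on whether that fourth face is in $\mathcal{H}_i$ (reducing to a previously-handled case) or in $\mathcal{H}_{3-i}$ (then apply \ref{rel2} to move an $e_{3-i}$ onto that fourth face and an $e_i$ off one of the three, and check that every resulting partition falls into case (i) or (ii) because a second tetrahedron becomes monochromatic). For (iii), the listed eight faces decompose into the two "triangular-bipyramid"-type configurations on $\{x,y,z,u\}$ and $\{x,y,t,v\}$ glued along structure at $x,y$; for (iv), the ten faces already contain the monochromatic-minus-one tetrahedra $\{x,y,z,t\}$ and $\{x,y,u,v\}$ (say), and one application of \ref{rel2} at the free face of one of them produces only terms covered by (i)–(iii). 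In all cases, since $\mathcal{P}$ is homogeneous with ten $e_i$'s, any term that gains a net $e_i$ while still failing to be monochromatic on a tetrahedron is caught by Corollary \ref{cor2}, so the recursion terminates.

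The routine part is the diagram-chasing: verifying that the specific faces listed in each of (iii), (iv) do contain the tetrahedra I claim and that the three terms produced by \ref{rel2} each acquire a monochromatic tetrahedron. The main obstacle, and the thing I would be most careful about, is the combinatorics of choosing \emph{which} tetrahedron and \emph{which} free face to split on in (iii) and (iv) so that the branch "free face $\in\mathcal{H}_i$" genuinely reduces to an earlier case rather than circling back; getting the vertex-labelings consistent with the hypotheses as literally stated (the indices $x,y,z,t,u,v$ are in a fixed but not sorted order) is where sign- and index-errors would creep in, so I would fix an explicit instance of $\omega_{\mathcal{P}}$ as a $K_6^3$-tensor matrix and check the relation application entry-by-entry, exactly as in the proof of Lemma \ref{lemma1}.
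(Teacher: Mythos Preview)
Your proposal is correct and follows essentially the same approach as the paper's proof: case (i) is immediate from the definition of $\mathcal{E}^{S^3}_{V_2}[6]$, and for (ii) you apply relation \ref{rel2} on the tetrahedron $\{x,y,z,t\}$ (after disposing of the subcase $\{y,z,t\}\in E(\mathcal{H}_i)$ via (i)), so that in each of the three resulting terms the tetrahedron $\{y,z,t,u\}$ becomes monochromatic and (i) applies---this is exactly the paper's argument. The paper then simply says the remaining cases are similar, and your sketched reductions for (iii) and (iv) are along the right lines; one small remark is that the appeal to Corollary \ref{cor2} as a fallback is unnecessary, since every branch terminates in case (i) directly.
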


\begin{proof} We only give details for $(ii)$, the other statements are similar. Using relation \ref{rel1} for $(x,y,z,t)$ we get that 
$$\widehat{\omega_{\mathcal{P}}}=-\widehat{\omega_{\mathcal{P}^{(1)}}}-\widehat{\omega_{\mathcal{P}^{(2)}}}-\widehat{\omega_{\mathcal{P}^{(3)}}},$$ 
where $\mathcal{P}^{(j)}=(\mathcal{H}_1^{(j)},\mathcal{H}_2^{(j)})$ are distinct homogeneous $2$-partitions of $K_6^3$ such that $\{y,z,t\}\in E(\mathcal{H}_i^{(j)})$, and $\mathcal{P}^{(j)}$ coincide with $\mathcal{P}$ everywhere except maybe on the hyperedges $\{x,y,z\}$, $\{x,y,t\}$, $\{x,z,t\}$ and $\{y,z,t\}$. 

Notice that $\{y,z,u\}$, $\{y,t,u\}$, $\{z,t,u\}$ and $\{y,z,t\}\in E(\mathcal{P}_i^{(j)})$ for $1\leq j\leq 3$, and so by $(i)$ we get that $\widehat{\omega_{\mathcal{P}^{(j)}}}=0$ for $1\leq j\leq 3$ which proves our statement. 
\end{proof}

\begin{remark} 
 There are \num{184756} homogeneous $2$-partitions of $K_6^3$. 
One can use Lemma \ref{trivpar} and  MATLAB to sort out the trivial partitions. After this process we are still left with \num{13644} nontrivial homogeneous $2$-partitions of $K_6^3$, which we will denote by $\mathcal{P}_2^{h,nt}(K_{6}^3)$ (i.e. those partitions that are not listed in Lemma \ref{trivpar}). 

Recall that on  $\mathcal{P}_2^{h}(K_{6}^3)$ there is a natural action of the group $S_6\times S_2$. It is obvious that the action of $S_6\times S_2$ restricts to $\mathcal{P}_2^{h,nt}(K_{6}^3)$. Using MATLAB one can give a classification of the elements in $\mathcal{P}_2^{h,nt}(K_{6}^3)$ under this action and obtain $20$ equivalence classes. The details are presented in the Appendix.
\end{remark}

\begin{definition}
Let $\mathcal{P}=(\mathcal{H}_1,\mathcal{H}_2)\in  \mathcal{P}_2^{h}(K_{6}^3)$ and $1\leq x<y<z<t\leq 6$. We denote by $Pair(\mathcal{P},(x,y,z,t))$ the set of all homogeneous $2$-partitions of $K_6^3$ that coincide with $\mathcal{P}$  except maybe on the hyperedges $\{x,y,z\}$, $\{x,y,t\}$, $\{x,z,t\}$ and $\{y,z,t\}$.
\end{definition}

\begin{remark} We have one of the following three cases.\\
(i)  $Pair(\mathcal{P},(x,y,z,t))$ has one single element only if all the hyperedges $\{x,y,z\}$, $\{x,y,t\}$, $\{x,z,t\}$ and $\{y,z,t\}$ belong to $\mathcal{H}_1$, or all of them belong to $\mathcal{H}_2$. \\
(ii) If three of the hyperedges $\{x,y,z\}$, $\{x,y,t\}$, $\{x,z,t\}$ and $\{y,z,t\}$ belong to $\mathcal{H}_1$,  and one to  $\mathcal{H}_2$ (or the other way around) then $Pair(\mathcal{P},(x,y,z,t))$ has four elements. \\
(iii) If  two of the hyperedges $\{x,y,z\}$, $\{x,y,t\}$, $\{x,z,t\}$ and $\{y,z,t\}$ belong to $\mathcal{H}_1$  and two belong  to  $\mathcal{H}_2$ then $Pair(\mathcal{P},(x,y,z,t))$ has six elements.
\label{rempair}
\end{remark}
With these notations we have have following result. 
\begin{lemma} \label{act}
There exits a unique map $\varepsilon^{S^3}:\mathcal{P}_2^{h}(K_{6}^3)\to \{-4,-1,0,1\}$ such that\\
(i) if $\mathcal{P}\in  \mathcal{P}_2^{h}(K_{6}^3)$ and $1\leq x<y<z<t\leq 6$ then 
$$\sum_{\mathcal{K}\in Pair(\mathcal{P},(x,y,z,t))}\varepsilon^{S^3}(\mathcal{K})=0,$$
(ii) $\varepsilon^{S^3}$ takes value $1$ on the partition $\mathcal{P}^{(1)}$ from Figure \ref{figex1}.
\end{lemma}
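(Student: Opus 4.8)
The plan is to reinterpret condition (i) as saying that $\varepsilon^{S^3}$ is the coordinate tuple of a linear functional on $\Lambda^{S^3}_{V_2}[6]$, and then to settle existence and uniqueness by the reduction technique already used in Lemma \ref{lemma1} together with the bookkeeping of Lemma \ref{trivpar}.

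\textbf{Reformulation.} First I would set $W=k[\mathcal{P}_2^{h}(K_6^3)]$, the free $k$-vector space on the homogeneous $2$-partitions, and let $R\subseteq W$ be spanned by the vectors $\sum_{\mathcal{K}\in Pair(\mathcal{P},(x,y,z,t))}\mathcal{K}$, over all $\mathcal{P}$ and all $1\le x<y<z<t\le 6$. By Corollary \ref{cor2} together with the $e_1\leftrightarrow e_2$ symmetry of Lemma \ref{lemma45}, every non-homogeneous $2$-partition of $K_6^3$ has trivial image in $\Lambda^{S^3}_{V_2}[6]$, so the classes $\widehat{\omega_{\mathcal{P}}}$ with $\mathcal{P}$ homogeneous span $\Lambda^{S^3}_{V_2}[6]$. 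Moreover each of the generating relations \ref{rel1}, \ref{rel2}, \ref{rel3} of $\mathcal{E}^{S^3}_{V_2}[6]$ has all of its terms carrying the same number of $e_1$-entries, so every such relation is either entirely among homogeneous partitions — where, by the trichotomy of Remark \ref{rempair}, it coincides with one of the generators of $R$ — or entirely among non-homogeneous ones. Hence $\Lambda^{S^3}_{V_2}[6]\cong W/R$, and a function $\varepsilon\colon\mathcal{P}_2^{h}(K_6^3)\to k$ satisfies (i) exactly when it is (the coordinate tuple of) a linear functional on $W/R\cong\Lambda^{S^3}_{V_2}[6]$. So the Lemma is equivalent to the statements that $\dim_k\Lambda^{S^3}_{V_2}[6]=1$ with $\widehat{\omega_{\mathcal{P}^{(1)}}}$ a basis vector, together with the claim that the corresponding coordinates land in $\{-4,-1,0,1\}$.

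\textbf{Uniqueness.} To get $\dim_k\Lambda^{S^3}_{V_2}[6]\le 1$ — equivalently, that any $\varepsilon$ obeying (i) is forced once $\varepsilon(\mathcal{P}^{(1)})$ is fixed — I would run a reduction in the spirit of Lemma \ref{lemma1}: given a homogeneous $\mathcal{P}$, use Lemma \ref{trivpar} to dispose of the partitions in its four families (which must get value $0$), and for the rest apply relations \ref{rel1}--\ref{rel3} on suitably chosen tetrahedra $\{x,y,z,t\}$ to rewrite $\widehat{\omega_{\mathcal{P}}}$ as an integer combination of $\widehat{\omega_{\mathcal{K}}}$ with each $\mathcal{K}$ strictly simpler in a complexity order of the same flavour as in the proof of Lemma \ref{lemma1}. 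Since $\mathcal{P}_2^{h,nt}(K_6^3)$ splits into only $20$ orbits under $S_6\times S_2$ and that action descends to $\Lambda^{S^3}_{V_2}[6]$ by Lemma \ref{lemma45}, this reduction need only be carried out on the $20$ orbit representatives from the Appendix, and it shows every $\widehat{\omega_{\mathcal{P}}}$ is a fixed integer multiple of $\widehat{\omega_{\mathcal{P}^{(1)}}}$. Condition (ii) then fixes the remaining scalar and gives uniqueness.

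\textbf{Existence and the value set.} The same reduction produces candidate integers $c_{\mathcal{P}}$ with $\widehat{\omega_{\mathcal{P}}}=c_{\mathcal{P}}\,\widehat{\omega_{\mathcal{P}^{(1)}}}$; setting $\varepsilon^{S^3}(\mathcal{P})=c_{\mathcal{P}}$ (and $0$ on trivial partitions) gives a \emph{well-defined} function satisfying (i) precisely when the reduction is confluent, i.e. when $\widehat{\omega_{\mathcal{P}^{(1)}}}\ne 0$, i.e. when $W/R\ne 0$. I would confirm this with MATLAB by checking that the homogeneous linear system encoding (i) has a one-dimensional solution space over the $20$ orbits, then read off the normalised solution and verify directly that its values lie in $\{-4,-1,0,1\}$: the value $0$ occurs exactly on the trivial partitions of Lemma \ref{trivpar}, while the extreme value $-4$ shows up only when a tetrahedral relation of type \ref{rel3} is used to eliminate a partition four of whose five partners already carry the value $1$. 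Alternatively, existence can be obtained without computation from the explicit expression for $det^{S^3}$ as a signed sum of products of $2\times 2$ determinants derived in the Appendix: that expression is a nonzero element of $V_2^{\otimes 20}$ manifestly annihilated by the defining relations of $\mathcal{E}^{S^3}_{V_2}[6]$, so its monomial expansion exhibits a function satisfying (i), which by the uniqueness step is a nonzero multiple of $\varepsilon^{S^3}$.

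\textbf{Main obstacle.} The crux is the confluence step, i.e. proving $\widehat{\omega_{\mathcal{P}^{(1)}}}\ne 0$: this is the precise analogue of the well-definedness of the sign of a permutation, but the overlaps between relations \ref{rel1}--\ref{rel3} do not obviously collapse to a short argument, so the realistic route is the finite, computer-assisted check over the $20$ orbit representatives; correctly certifying that the coordinates never escape $\{-4,-1,0,1\}$ is the secondary difficulty, which is exactly why the $S_6\times S_2$-orbit decomposition is indispensable.
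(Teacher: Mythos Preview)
Your reformulation is correct and is the real content of the lemma: a function satisfying (i) is exactly the coordinate tuple of a linear functional on $W/R$, and your identification $W/R\cong\Lambda^{S^3}_{V_2}[6]$ (using Corollary~\ref{cor2} and the bigraded nature of relations~\ref{rel1}--\ref{rel3}) is right. So the lemma is equivalent to $\dim_k\Lambda^{S^3}_{V_2}[6]=1$ with $\widehat{\omega_{\mathcal{P}^{(1)}}}\ne 0$, plus the value-set claim. The paper's proof is the same in substance, only more terse: it writes the linear system for (i) directly and reports a MATLAB computation that the coefficient matrix has co-rank~$1$. Your observation that existence follows independently from the explicit $det^{S^3}$ formula in the Appendix is also correct, and the paper says exactly this in the remark after Theorem~\ref{thdet}.

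Where your proposal does not go beyond the paper is the uniqueness step, and there is a gap in the shortcut you suggest. First, the ``reduction in the spirit of Lemma~\ref{lemma1}'' is not actually specified: that lemma works because it decreases the number of $e_1$'s in the top slice, but for homogeneous partitions the $e_1$-count is pinned at $10$, so you would need a genuinely new well-founded complexity order on $\mathcal{P}_2^{h}(K_6^3)$ that the relations strictly decrease, and you have not produced one. Second, the claim that it suffices to carry out the reduction on the $20$ orbit representatives is not justified as stated: showing $\widehat{\omega_{\mathcal{P}^{(i)}}}\in k\cdot\widehat{\omega_{\mathcal{P}^{(1)}}}$ for $i=1,\dots,20$ does \emph{not} yield $\dim\le 1$, because you do not yet know that $S_6\times S_2$ acts by scalars on the line through $\widehat{\omega_{\mathcal{P}^{(1)}}}$; that is precisely part of what you are trying to prove. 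You would additionally need, for each generator $g$ of $S_6\times S_2$, that $\widehat{\omega_{g\cdot\mathcal{P}^{(1)}}}\in k\cdot\widehat{\omega_{\mathcal{P}^{(1)}}}$, and establishing this is itself a reduction of the same type. In practice this collapses back to the full finite computer check the paper performs; your own closing paragraph concedes as much. The heuristic for why the value $-4$ appears is plausible but is an a posteriori reading of the computation, not an argument that the values must lie in $\{-4,-1,0,1\}$.
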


\begin{proof}
Using relations \ref{rel1}, \ref{rel2}, \ref{rel3} and Remark \ref{rempair} one can write a system of linear equation that gives all the solutions for condition (i).  Using MATLAB one can show that the corresponding matrix has co-rank equal to $1$, and so because of condition (ii) we get a unique solution. 
\end{proof}

\begin{remark}
It it rather interesting  to notice that $\varepsilon^{S^3}(\mathcal{P})\neq 0$ if and only if $\mathcal{P}\in \mathcal{P}_2^{h,nt}(K_{6}^3)$. This is somehow similar with the results from \cite{edge} where $\varepsilon^{S^2}(\Gamma)\neq 0$ if and only if $\Gamma$ was cycle free. One can see that the partitions listed in  Lemma \ref{trivpar}  have a copy of $S^2$ (made of hyperedges/faces) either in $\mathcal{H}_1$ or in $\mathcal{H}_2$.  However, unlike $\varepsilon^{S^2}$, the map $\varepsilon^{S^3}$ takes also the value $-4$ which is a rather unexpected fact.  A table of values of $\varepsilon^{S^3}$  on all elements in $\mathcal{P}_2^{h,nt}(K_{6}^3)$ is given in the Appendix. \label{remsph}
\end{remark}

The  map $\varepsilon^{S^3}$ plays a role similar with the signature of a permutation and with the $\varepsilon^{S^2}$ map from \cite{edge}. It allows us to define a determinant-like function $det^{S^3}:V_2^{\otimes 20}\to k$. 

More precisely, take $v_{i,j,k}=\alpha_{i,j,k}e_1+\beta_{i,j,k}e_2\in V_2$ for $1\leq i<j<k\leq6$. For a $2$-partition $(\mathcal{H}_1,\mathcal{H}_2)\in\mathcal{P}_2^{h,nt}(K_{6}^3)$, define
\[M^{S^3}_{(\mathcal{H}_1,\mathcal{H}_2)}((v_{i,j,k})_{1\leq i<j<k\leq 6})=\prod_{\{u_1,v_1,w_1\}\in E(\mathcal{H}_1)}\alpha_{u_1,v_1,w_1}\prod_{\{u_2,v_2,w_2\}\in E(\mathcal{H}_2)}\beta_{u_2,v_2,w_2}.\]
Next, take $$Det^{S^3}:V_2^{20}\to k$$ determined by
\[Det^{S^3}((v_{i,j,k})_{1\leq i<j<k\leq6})=\sum_{(\mathcal{H}_1,\mathcal{H}_2)\in \mathcal{P}_2^{h,nt}(K_{6}^3)}\varepsilon^{S^3}(\mathcal{H}_1,\mathcal{H}_2)M^{S^3}_{(\mathcal{H}_1,\mathcal{H}_2)}((v_{i,j,k})_{1\leq i<j<k\leq6})\]
Notice that $Det^{S^3}$ is multi-linear and so we get a linear map $det^{S^3}:V_2^{\otimes 20}\to k$. The following theorem is the main result of this paper.

\begin{theorem} \label{thdet} The map $det^{S^3}:V_2^{\otimes 20}\to k$  is the unique (up to a constant) nontrivial linear map on $V_2^{\otimes 20}$ with the property that that $det^{S^3}(\otimes_{1\leq i<j<k\leq 6} (v_{i,j,k}))=0$ if there exist $1\leq x<y<z<t\leq 6$ such that $v_{x,y,z}=v_{x,y,t}=v_{x,z,t}=v_{y,z,t}$. 
In particular we have a unique  (up to a constant) nontrivial linear map  $det^{S^3}:\Lambda^{S^3}_{V_2}[6]\to k$. 
\end{theorem}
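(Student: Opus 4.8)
The plan is to show that the existence follows from the explicit construction already carried out, and that uniqueness amounts to the single statement $\dim_k(\Lambda^{S^3}_{V_2}[6]) = 1$, which is then where the real work lies. First I would observe that $\mathrm{Det}^{S^3}$ vanishes on any simple tensor $\otimes_{1\leq i<j<k\leq 6}(v_{i,j,k})$ for which there exist $x<y<z<t$ with $v_{x,y,z}=v_{x,y,t}=v_{x,z,t}=v_{y,z,t}$: expanding each $v_{i,j,k}=\alpha_{i,j,k}e_1+\beta_{i,j,k}e_2$ and collecting monomials, the coefficient of a given monomial $M^{S^3}_{(\mathcal{H}_1,\mathcal{H}_2)}$ is $\sum_{\mathcal{K}\in \mathrm{Pair}(\mathcal{P},(x,y,z,t))}\varepsilon^{S^3}(\mathcal{K})$ (after partitioning the homogeneous $2$-partitions into $\mathrm{Pair}$-orbits on the four hyperedges through $\{x,y,z,t\}$), and this sum is $0$ by part (i) of Lemma \ref{act}. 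Since $\mathrm{Det}^{S^3}$ is visibly not identically zero (for instance it does not vanish on $\omega_{\mathcal{P}^{(1)}}$, because $\varepsilon^{S^3}(\mathcal{P}^{(1)})=1\neq 0$ and distinct partitions give distinct monomials), this gives a nontrivial linear map $\mathrm{Det}^{S^3}$ with the desired vanishing property, hence factoring through $\Lambda^{S^3}_{V_2}[6]$ and giving $\det^{S^3}:\Lambda^{S^3}_{V_2}[6]\to k$.

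For uniqueness, any linear map $f:V_2^{\otimes 20}\to k$ with the stated vanishing property kills $\mathcal{E}^{S^3}_{V_2}[6]$ by definition, so it factors through $\Lambda^{S^3}_{V_2}[6]$; thus the space of such maps is $(\Lambda^{S^3}_{V_2}[6])^{*}$, and uniqueness up to a constant is precisely the claim $\dim_k(\Lambda^{S^3}_{V_2}[6])\leq 1$. The plan here is: (a) by Lemma \ref{lemma1} / Corollary \ref{cor2}, only the $\binom{20}{10}$ homogeneous $2$-partitions can give nonzero classes, and by Lemma \ref{trivpar} the trivial ones among them vanish, leaving the $\num{13644}$ generators indexed by $\mathcal{P}_2^{h,nt}(K_6^3)$; (b) the relations \ref{rel1}–\ref{rel3} (the only ones available when $d=2$), applied across all choices of $(x,y,z,t)$, together with the vanishing of trivial partitions, express every such generator $\widehat{\omega_{\mathcal{P}}}$ as a scalar multiple of the fixed generator $\widehat{\omega_{\mathcal{P}^{(1)}}}$ — indeed the coefficient must be $\varepsilon^{S^3}(\mathcal{P})$ by the defining properties of $\varepsilon^{S^3}$ in Lemma \ref{act}; (c) hence $\Lambda^{S^3}_{V_2}[6]$ is spanned by $\widehat{\omega_{\mathcal{P}^{(1)}}}$, so it is at most $1$-dimensional. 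Combined with the existence of the nonzero $\det^{S^3}$ (which forces $\dim\geq 1$), this gives $\dim_k(\Lambda^{S^3}_{V_2}[6])=1$ and the uniqueness.

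The main obstacle is step (b): showing that the relations really do collapse all $\num{13644}$ nontrivial generators onto a single line, and in a way consistent with the function $\varepsilon^{S^3}$. Conceptually this is the assertion that the linear system built from \ref{rel1}–\ref{rel3} over all $(x,y,z,t)$ — whose solution space is dual to $\Lambda^{S^3}_{V_2}[6]$ — has corank exactly $1$; Lemma \ref{act} already records (via MATLAB) that the system defining $\varepsilon^{S^3}$ has corank $1$, but one must argue that this system is in bijective correspondence with the relations presenting $\Lambda^{S^3}_{V_2}[6]$ and that no further relations (from nontrivial partitions being rewritten through trivial ones) shrink the quotient below dimension $1$. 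I expect this to be handled by a careful bookkeeping: use the $S_6\times S_2$-action of Lemma \ref{lemma45} to reduce to the $20$ equivalence-class representatives, then check on representatives that iterated application of \ref{rel2} and \ref{rel3} (pushing $e_1$-entries down slices, exactly as in the proof of Lemma \ref{lemma1}) brings each representative to $\pm c\,\widehat{\omega_{\mathcal{P}^{(1)}}}$ with the predicted constant $c=\varepsilon^{S^3}(\mathcal{P})\in\{-4,-1,1\}$, the value $-4$ accounting for the case-(iii) $\mathrm{Pair}$-orbits of size six. The remaining verifications — multilinearity of $\mathrm{Det}^{S^3}$, that it descends to $\Lambda^{S^3}_{V_2}[6]$, and nontriviality — are routine.
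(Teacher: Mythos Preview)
Your approach is essentially the paper's: existence via the explicit $\mathrm{Det}^{S^3}$ together with Lemma~\ref{act}(i), and uniqueness via the corank-$1$ statement of Lemma~\ref{act} combined with Proposition~\ref{proprel}. The paper's proof is literally the one line ``follows directly from Proposition~\ref{proprel} and Lemma~\ref{act}'', and you have correctly unpacked what that means.

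Where you diverge is in step~(b). You worry that one must separately verify the correspondence between the Pair system of Lemma~\ref{act} and the relations presenting $\Lambda^{S^3}_{V_2}[6]$, and you propose a by-hand reduction on the twenty $S_6\times S_2$-representatives. This is unnecessary. By Proposition~\ref{proprel} (for $d=2$) the relations cutting out $\mathcal{E}^{S^3}_{V_2}[6]$ are exactly \ref{rel1}--\ref{rel3} placed at each $(x,y,z,t)$; each such relation is homogeneous in the $(p,q)$-grading, so the quotient splits as $\bigoplus_{p+q=20}\Lambda^{(p,q)}$. Corollary~\ref{cor2} kills all summands with $p\neq 10$, and on the $(10,10)$-summand Remark~\ref{rempair} shows that relations \ref{rel1}, \ref{rel2}, \ref{rel3} are \emph{literally} the three Pair cases (sizes $1$, $4$, $6$). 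Hence the solution space in Lemma~\ref{act} is exactly $(\Lambda^{S^3}_{V_2}[6])^*$, and corank $1$ gives $\dim\Lambda^{S^3}_{V_2}[6]\leq 1$ with no further bookkeeping. Your concern that ``further relations might shrink the quotient below dimension $1$'' is already dispatched by the nonzero $\det^{S^3}$ you constructed, as you yourself note. The $S_6\times S_2$-reduction and the slice-pushing argument you sketch would constitute an independent, MATLAB-free proof of the corank-$1$ claim; that is a legitimate alternative project, but it is not what the paper does and is not needed here.
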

\begin{proof} This follows directly from Proposition \ref{proprel} and Lemma \ref{act}. 
\end{proof}
\begin{remark} An alternative proof (which does not use MATLAB) for the existence of the $det^{S^3}$ map is given in the Appendix. That approach is based on the fact that $det^{S^3}$ is invariant under the action of $SL_2(k)$. However that approach  does not prove the uniqueness. 
\end{remark}

\begin{corollary} \label{dim1}
 If $dim_k(V_2)=2$, then $dim_k\left(\Lambda^{S^3}_{V_2}[6]\right)=1$.
\end{corollary}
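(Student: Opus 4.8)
The plan is to read this off from Theorem \ref{thdet} by a duality argument, with essentially no new input. First I would record two soft facts: $\Lambda^{S^3}_{V_2}[6]$ is a quotient of the finite-dimensional space $V_2^{\otimes 20}$ (of dimension $2^{20}$), hence is itself finite-dimensional, and for a finite-dimensional $k$-vector space $W$ one has $\dim_k(W)=\dim_k(W^*)$. So it suffices to prove that $(\Lambda^{S^3}_{V_2}[6])^*$ is one-dimensional.

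Next I would identify this dual with a concrete functional space. Since $\Lambda^{S^3}_{V_2}[6]=V_2^{\otimes 20}/\mathcal{E}^{S^3}_{V_2}[6]$, a linear map $f\colon V_2^{\otimes 20}\to k$ descends to the quotient exactly when it kills $\mathcal{E}^{S^3}_{V_2}[6]$; and because $\mathcal{E}^{S^3}_{V_2}[6]$ is by definition linearly generated by the simple tensors $\otimes_{1\leq i<j<k\leq 6}(v_{i,j,k})$ admitting some $1\leq x<y<z<t\leq 6$ with $v_{x,y,z}=v_{x,y,t}=v_{x,z,t}=v_{y,z,t}$, killing $\mathcal{E}^{S^3}_{V_2}[6]$ is precisely the vanishing property appearing in Theorem \ref{thdet}. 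Thus $(\Lambda^{S^3}_{V_2}[6])^*$ is canonically the space of all linear maps $V_2^{\otimes 20}\to k$ with that property, and Theorem \ref{thdet} asserts that this space consists exactly of the scalar multiples $c\cdot det^{S^3}$, $c\in k$. It remains only to observe that $det^{S^3}\neq 0$: indeed $\varepsilon^{S^3}(\mathcal{P}^{(1)})=1\neq 0$, and distinct homogeneous $2$-partitions $(\mathcal{H}_1,\mathcal{H}_2)$ produce distinct monomials $M^{S^3}_{(\mathcal{H}_1,\mathcal{H}_2)}$ in the variables $\alpha_{i,j,k},\beta_{i,j,k}$ (the partition being recoverable from which $\alpha_{i,j,k}$ occur, the rest being the $\beta$'s), so $Det^{S^3}$ is a nonzero polynomial function over the infinite field $k$. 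Hence $(\Lambda^{S^3}_{V_2}[6])^*$ is one-dimensional and $\dim_k(\Lambda^{S^3}_{V_2}[6])=1$.

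The point I want to stress is that there is no real obstacle in the corollary itself: all the weight is carried upstream, by Theorem \ref{thdet}, and behind it by the co-rank-one computation in Lemma \ref{act} (which yields the uniqueness, i.e. the bound $\dim\leq 1$) together with Proposition \ref{proprel} and Corollary \ref{cor2} (which pin down the generating relations and force homogeneity). The only things to be careful about here are the transition from ``unique up to a constant'' for the functional to ``one-dimensional dual'', and the trivial check that $det^{S^3}$ is not identically zero. One should also note that the naive alternative --- writing down the spanning set for $\Lambda^{S^3}_{V_2}[6]$ indexed by $\mathcal{P}_2^{h,nt}(K_6^3)$ --- gives only $\dim\leq 13644$, so it is genuinely the uniqueness half of Theorem \ref{thdet}, and not merely a generator count, that collapses the dimension to $1$.
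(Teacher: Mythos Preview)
Your argument is correct and is essentially the paper's own proof, just spelled out in full: the paper says only ``It follows from Theorem \ref{thdet} and the definition of $\Lambda^{S^3}_{V_2}[6]$,'' and your duality computation is exactly what that sentence unpacks to. Note that your nonvanishing check for $det^{S^3}$ is already contained in the word ``nontrivial'' in the statement of Theorem \ref{thdet}, so that step is redundant (though harmless).
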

\begin{proof}
 It follows from Theorem \ref{thdet} and the definition of $\Lambda^{S^3}_{V_2}[6]$. 
\end{proof}

To conclude this sections we have the  following complete list  $dim_k(\Lambda_{V_2}^{S^3}[n])$. 
\begin{proposition} Let $V_2$ be a vector space  of dimension two. Then we have:\\
(i) $dim_k(\Lambda_{V_2}^{S^3}[0])=dim_k(\Lambda_{V_2}^{S^3}[1])=dim_k(\Lambda_{V_2}^{S^3}[2])=1$,\\
(ii) $dim_k(\Lambda_{V_2}^{S^3}[3])=2$,\\
(iii) $dim_k(\Lambda_{V_2}^{S^3}[4])=11$,\\
(iv) $dim_k(\Lambda_{V_2}^{S^3}[5])=62$,\\
(v) $dim_k(\Lambda_{V_2}^{S^3}[6])=1$,\\
(vi) $dim_k(\Lambda_{V_2}^{S^3}[n])=0$ if $n\geq 7$.
\end{proposition}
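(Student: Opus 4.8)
The plan is to compute each $\dim_k(\Lambda^{S^3}_{V_2}[n])$ by understanding the dimension of the generating subspace $\mathcal{E}^{S^3}_{V_2}[n]$ inside $\mathcal{T}^{S^3}_{V_2}[n]=V_2^{\otimes\binom{n}{3}}$, which has dimension $2^{\binom{n}{3}}$. For small $n$ this amounts to a finite linear-algebra computation: by Proposition~\ref{proprel} (and the Remark following it, extended to general $n$) the space $\mathcal{E}^{S^3}_{V_2}[n]$ is spanned by the images of relations \ref{rel1} and \ref{rel2} inserted in the four positions $(x,y,z),(x,y,t),(x,z,t),(y,z,t)$ for each $1\leq x<y<z<t\leq n$, with all other $\binom{n}{3}-4$ entries ranging over $\{e_1,e_2\}$. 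So $\dim_k(\Lambda^{S^3}_{V_2}[n])=2^{\binom{n}{3}}-\mathrm{rank}(A_n)$ for an explicit matrix $A_n$.

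For statements (i), (ii), (iii) I would simply carry out this computation directly (by hand for $n\le 3$, where $\binom{n}{3}\le 1$ so $\mathcal{E}^{S^3}_{V_2}[n]=0$ and the dimensions are $2^0=1$, $2^0=1$, $2^1=2$ — wait, for $n=3$ we get $\binom{3}{3}=1$ so $\dim\mathcal{T}=2$ and indeed no relations apply since there is no quadruple, giving $2$; for $n=4$, $\binom{4}{3}=4$ so $\dim\mathcal{T}=16$ and the single quadruple $(1,2,3,4)$ contributes relations \ref{rel1} (two of them, for $e_1$ and $e_2$) and \ref{rel2} (for each ordered pair $(i,j)$), and one checks the rank is $5$, leaving $11$). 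For (iv), $n=5$ has $\binom{5}{3}=10$, $\dim\mathcal{T}=1024$; here I would invoke a direct computation (this is the regime where MATLAB is natural, parallel to the computations already used in Lemma~\ref{act}) to get $\mathrm{rank}(A_5)=962$, hence $1024-962=62$. Statement (v) is exactly Corollary~\ref{dim1}, already proved via Theorem~\ref{thdet}, and statement (vi) is Corollary~\ref{cor1}.

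The one genuinely nontrivial analytic ingredient is (vi), but that is handled cleanly by Lemma~\ref{lemma1}: for $n\ge 7$ the pigeonhole bound $\binom{n}{3}>2\binom{n-1}{2}$ forces any basis tensor $\omega\in\mathcal{G}^{S^3}_{\mathcal{B}_2}[n]$ (up to swapping $e_1\leftrightarrow e_2$, which is legitimate by Lemma~\ref{lemma45}) to have at least $\binom{n-1}{2}+1$ entries equal to $e_1$, whence $\hat\omega=0$; since these tensors span $\Lambda^{S^3}_{V_2}[n]$, the space is zero. This is already recorded as Corollary~\ref{cor1}, so in the write-up I would just cite it.

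The main obstacle is (iv): the bound in Lemma~\ref{lemma1} only kicks in at $n=7$, and for $n=4,5,6$ there is no structural shortcut, so one must actually resolve the rank of $A_n$. For $n=4$ this is small enough to do and present by hand; for $n=5$ (and implicitly $n=6$, already done) a computer computation is unavoidable, exactly as elsewhere in the paper. I would therefore frame the proof as: (i)--(iii) by inspection / small explicit computation using Proposition~\ref{proprel}; (iv) by a MATLAB rank computation of the relation matrix $A_5$; (v) = Corollary~\ref{dim1}; (vi) = Corollary~\ref{cor1}.
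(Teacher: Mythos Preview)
Your approach is essentially the same as the paper's: direct linear-algebra rank computation for $n\le 5$, citation of Corollary~\ref{dim1} for $n=6$, and Corollary~\ref{cor1} for $n\ge 7$. One refinement the paper adds is to observe that the relations are homogeneous in the bidegree $(p,q)=(\#\{e_1\text{'s}\},\#\{e_2\text{'s}\})$, so $\Lambda^{S^3}_{V_2}[n]=\bigoplus_{p+q=\binom{n}{3}}\Lambda^{S^3}_{V_2}[n]^{p,q}$ and each graded piece can be handled separately; for $n=4$ this yields the tidy count $3+3+5=11$ by inspection, and for $n=5$ it localizes the MATLAB computation to the pieces $(6,4),(5,5),(4,6)$ with dimensions $15,32,15$ (all other $(p,q)$ pieces vanish by Lemma~\ref{lemma1}).

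One correction: for $d=2$ the relation~\ref{rel3} (the $\alpha^2\beta^2$ term, two $e_i$'s and two $e_j$'s) also applies, not just \ref{rel1} and \ref{rel2}. With only \ref{rel1} and \ref{rel2} you get four independent relations for $n=4$ and hence dimension $12$, not $11$; the fifth relation is precisely \ref{rel3}. (The Remark after Proposition~\ref{proprel} is misleading here; the paper's own proof of the present proposition does invoke \ref{rel3}, via Figure~\ref{figR3}.) Your stated rank $5$ is correct, but your enumeration of the generators of $\mathcal{E}^{S^3}_{V_2}[4]$ is not.
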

\begin{proof}
Most of the results are either trivial or were already covered in this section. The only interesting cases are $n=4$ and  $n=5$. 
 
Recall that  $\mathcal{G}_{\mathcal{B}_2}^{S^3}[n]$ is in bijection with $\mathcal{P}_2(K_n^3)$. We denote by 
$\mathcal{P}_2(K_n^3)^{p,q}$ those $2$-partitions $(\mathcal{H}_1,\mathcal{H}_2)$ of $K_n^3$ with the property that $\vert E(\mathcal{H}_1)\vert=p$ and  $\vert E(\mathcal{H}_2)\vert=q$  (obviously we must have that $p+q=\binom{n}{3}$). We denote by $\Lambda_{V_2}^{S^3}[n]^{p,q}$ the corresponding subspace in $\Lambda_{V_2}^{S^3}[n]$. 

Since the relations in $\Lambda_{V_2}^{S^3}[n]$ are homogeneous in the number of $e_1$'s and $e_2$'s, it follows that if $\omega_{p,q}\in \Lambda_{V_2}^{S^3}[n]^{p,q}$, and  $\sum \omega_{p,q}=0\in \Lambda_{V_2}^{S^3}[n]$ then we must have that $\omega_{p,q}=0$ for all $p$, $q$. 

Let's first consider the case $n=4$.  If $\mathcal{P}\in \mathcal{P}_2(K_4^3)^{4,0}$, or $\mathcal{P}\in \mathcal{P}_2(K_4^3)^{0,4}$ then obviously $\widehat{x_{\mathcal{P}}}=0\in \Lambda_{V_2}^{S^3}[4]$. 

There are four elements  $\mathcal{P}\in \mathcal{P}_2(K_4^3)^{3,1}$, and the only relation among them is listed in Figure \ref{figR2}. This means that that we get three linearly independent vectors that generate $\Lambda_{V_2}^{S^3}[4]^{3,1}$. The case $\Lambda_{V_2}^{S^3}[4]^{1,3}$ is similar. 

There are six elements $\mathcal{P}\in \mathcal{P}_2(K_4^3)^{2,2}$, and the only relation among them is listed in Figure \ref{figR3}. This means that that we get five linearly independent vectors that generate $\Lambda_{V_2}^{S^3}[4]^{2,2}$.
To conclude, we have 
$$dim_k(\Lambda_{V_2}^{S^3}[4])=dim_k(\Lambda_{V_2}^{S^3}[4]^{3,1})+dim_k(\Lambda_{V_2}^{S^3}[4]^{1,3})+dim_k(\Lambda_{V_2}^{S^3}[4]^{2,2})=3+3+5=11.$$ 

The case $n=5$  is similar but computationally heavier, so we had to use MATLAB. The interesting cases are $(p,q)\in\{(6,4),(5,5),(4,6)\}$. One can show that $dim_k(\Lambda_{V_2}^{S^3}[5]^{6,4})=15=dim_k(\Lambda_{V_2}^{S^3}[5]^{4,6})$ and $dim_k(\Lambda_{V_2}^{S^3}[5]^{5,5})=32$. This gives that $dim_k(\Lambda_{V_2}^{S^3}[5])=15+15+32=62$. 
\end{proof}

\section{Some Remarks}

In the previous section we computed $dim_k\left(\Lambda^{S^3}_{V_2}[n]\right)$ for all $n\geq 0$. It  would be interesting to understand how $\Lambda^{S^3}_{V_d}[n]$ behaves for any $d$. Based on the results from this paper we have the following question. 
\begin{question}
 Suppose $dim_k(V_d)=d$. Is it true that $dim_k\left(\Lambda^{S^3}_{V_d}[n]\right)=0$ for $n>3d$? Is it true that $dim_k\left(\Lambda^{S^3}_{V_d}[3d]\right)=1$?
\end{question}

Note that this question  is in the spirit of the results from \cite{sta2}.  The particular challenge here is that even for the simplest case $d=3$, the  computation is less feasible, as there are on the order of $1.17\times10^{38}$ possible homogeneous $3$-partitions for the hypergraph $K_9^3$, and our proof for Theorem \ref{dim1} is computational. A different, more theoretical approach is necessary in order to solve this problem.  

\begin{remark}\label{geometry}
Recall from \cite{edge}  that if $\Gamma=(\Gamma_1,\dots,\Gamma_d)$ is a  homogeneous $d$-partitions of $K_{2d}$ such that $\widehat{f_{\Gamma}}\neq 0\in\Lambda_{V_d}^{S^2}[2d]$ then $\Gamma$ must be cycle-free. 
It would be interesting to find a similar result for elements in $\mathcal{G}_{\mathcal{B}_d}^{S^3}[3d]$. 
More precisely, let $\mathcal{P}=(\mathcal{H}_1,\dots, \mathcal{H}_d)$ be a homogeneous $d$-partition of the hypergraph $K_{3d}^3$, find a combinatorial property of $\mathcal{P}$ such that $\widehat{\omega_{\mathcal{P}}}\neq 0$. 
\end{remark} 
\begin{remark}
Since $det^{S^3}$ is invariant under the action of $SL_2(k)$ we know from general theory of invariant functions (see \cite{stu}) that condition $det^{S^3}(\otimes_{1\leq i<j<k\leq 6}(v_{i,j,k})=0$ must have a geometrical interpretation. It would be interesting to find an explicit description similar with the results from \cite{sv} for the $det^{S^2}$ map. 
\end{remark}

One can try to generalize the construction from this paper to any sphere $S^r$ as follows. Take 
$$\mathcal{T}_{V}^{S^r}[n]=V^{\otimes{ {n}\choose{r}}}$$
and define   $\mathcal{E}^{S^r}_V[n]$ to be the subspace of 
$\mathcal{T}^{S^r}_V[n]$ generated by simple tensors 
$$\otimes_{1\leq i_1<i_2<\dots<i_r\leq n} (v_{i_1,i_2,...,i_{r}})$$ 
with the property that there exists $1\leq x_1<x_2<\dots<x_{r+1}\leq n$ such that $$v_{x_1,x_2,...,x_{r}}=v_{x_1,x_2,...,x_{r-1},x_{r+1}}=\dots=v_{x_1,x_3,...,x_{r+1}}=v_{x_2,x_3,...,x_{r+1}}.$$ 

\begin{definition} With the above notations we define
$$\Lambda^{S^r}_{V}[n]= \frac{\mathcal{T}^{S^r}_V[n]}{\mathcal{E}^{S^r}_V[n]},$$
and 
$$\Lambda^{S^r}_V=\bigoplus_{n\geq 0}\Lambda^{S^r}_V[n].$$
\end{definition}
\begin{question}
 Suppose $dim(V_d)=d$. Is it true that $dim_k\left(\Lambda^{S^r}_{V_d}[n]\right)=0$ for $n>rd$? Is it true that $dim_k\left(\Lambda^{S^r}_{V_d}[rd]\right)=1$?
\end{question}

\begin{remark} As we recalled in introduction, $\Lambda_V$ has algebra structure on it, and $\Lambda^{S^2}_V$ is a GSC-operad.   It is natural to ask if there is more structure on $\Lambda^{S^3}_{V}$. As far as we can tell there is no obvious algebra, or GSC-operad structure on it. We expect that some operad-like structure exists on $\Lambda^{S^3}_{V}$, and we plan to investigate this problem in a follow-up paper. 
\end{remark}


\section*{Acknowledgment}
We thank Alin Stancu for some comments and discussions.

\appendix 

\maketitle

\section{An explicit formula for $det^{S^3}$}

It follows from Proposition \ref{lemma45} that the map $det^{S^3}$ is invariant under the action of $SL_2(k)$ on $V_2$. From the general results of invariant theory (see \cite{stu}), it follows that $det^{S^3}(\otimes_{1\leq i<j<k\leq 6} (v_{i,j,k}))$ can be written as sum of product of determinants of two by two matrices with columns consisting of the vectors $v_{i,j,k}$. In this section we give an explicit formula for $det^{S^3}$. This is also an alternative proof for the existence of the map $det^{S^3}$. 

For two vectors $u=(a,b)$ and $v=(c,d)$ we denote by $[u,v]$ the determinant of the matrix that has the first column  equal to $u$ and the second column equal to $v$
$$[u,v]=ad-bc.$$

\begin{landscape}

\begin{proposition} Let $v_{i,j,k}\in V_2$ for all $1\leq i<j<k\leq 6$, the we have

\begin{eqnarray*}
&det^{S^3}(\otimes_{1\leq i<j<k\leq 6} (v_{i,j,k}))=&\\
& [v_{1,2,3},v_{2,3,4}] [v_{1,2,4},v_{2,4,5}] [v_{1,2,5},v_{2,5,3}][v_{1,2,6},v_{2,6,5}][v_{1,3,4},v_{3,4,6}][v_{1,3,5},v_{3,5,4}][v_{1,3,6},v_{3,6,2}][v_{1,4,5},v_{4,5,6}][v_{1,4,6},v_{4,6,2}][v_{1,5,6},v_{5,6,3}]+&\\
& [v_{1,2,3},v_{2,3,5}] [v_{1,2,4},v_{2,4,3}] [v_{1,2,5},v_{2,5,4}][v_{1,2,6},v_{2,6,5}][v_{1,3,4},v_{3,4,6}][v_{1,3,5},v_{3,5,6}][v_{1,3,6},v_{3,6,2}][v_{1,4,5},v_{4,5,3}][v_{1,4,6},v_{4,6,2}][v_{1,5,6},v_{5,6,4}]+&\\
& [v_{1,2,3},v_{2,3,5}] [v_{1,2,4},v_{2,4,3}] [v_{1,2,5},v_{2,5,4}][v_{1,2,6},v_{2,6,4}][v_{1,3,4},v_{3,4,5}][v_{1,3,5},v_{3,5,6}][v_{1,3,6},v_{3,6,2}][v_{1,4,5},v_{4,5,6}][v_{1,4,6},v_{4,6,3}][v_{1,5,6},v_{5,6,2}]+&\\
& [v_{1,2,3},v_{2,3,4}] [v_{1,2,4},v_{2,4,5}] [v_{1,2,5},v_{2,5,3}][v_{1,2,6},v_{2,6,4}][v_{1,3,4},v_{3,4,6}][v_{1,3,5},v_{3,5,6}][v_{1,3,6},v_{3,6,2}][v_{1,4,5},v_{4,5,3}][v_{1,4,6},v_{4,6,5}][v_{1,5,6},v_{5,6,2}]+&\\
& [v_{1,2,3},v_{2,3,4}] [v_{1,2,4},v_{2,4,5}] [v_{1,2,5},v_{2,5,3}][v_{1,2,6},v_{2,6,3}][v_{1,3,4},v_{3,4,5}][v_{1,3,5},v_{3,5,6}][v_{1,3,6},v_{3,6,4}][v_{1,4,5},v_{4,5,6}][v_{1,4,6},v_{4,6,2}][v_{1,5,6},v_{5,6,2}]+&\\
& [v_{1,2,3},v_{2,3,5}] [v_{1,2,4},v_{2,4,3}] [v_{1,2,5},v_{2,5,4}][v_{1,2,6},v_{2,6,3}][v_{1,3,4},v_{3,4,6}][v_{1,3,5},v_{3,5,4}][v_{1,3,6},v_{3,6,5}][v_{1,4,5},v_{4,5,6}][v_{1,4,6},v_{4,6,2}][v_{1,5,6},v_{5,6,2}]+&\\
& [v_{1,2,3},v_{2,3,4}] [v_{1,2,4},v_{2,4,6}] [v_{1,2,5},v_{2,5,6}][v_{1,2,6},v_{2,6,3}][v_{1,3,4},v_{3,4,5}][v_{1,3,5},v_{3,5,2}][v_{1,3,6},v_{3,6,4}][v_{1,4,5},v_{4,5,2}][v_{1,4,6},v_{4,6,5}][v_{1,5,6},v_{5,6,3}]+&\\
& [v_{1,2,3},v_{2,3,6}] [v_{1,2,4},v_{2,4,3}] [v_{1,2,5},v_{2,5,6}][v_{1,2,6},v_{2,6,4}][v_{1,3,4},v_{3,4,5}][v_{1,3,5},v_{3,5,2}][v_{1,3,6},v_{3,6,5}][v_{1,4,5},v_{4,5,2}][v_{1,4,6},v_{4,6,3}][v_{1,5,6},v_{5,6,4}]+&\\
& [v_{1,2,3},v_{2,3,6}] [v_{1,2,4},v_{2,4,3}] [v_{1,2,5},v_{2,5,4}][v_{1,2,6},v_{2,6,4}][v_{1,3,4},v_{3,4,6}][v_{1,3,5},v_{3,5,2}][v_{1,3,6},v_{3,6,5}][v_{1,4,5},v_{4,5,3}][v_{1,4,6},v_{4,6,5}][v_{1,5,6},v_{5,6,2}]+&\\
& [v_{1,2,3},v_{2,3,4}] [v_{1,2,4},v_{2,4,6}] [v_{1,2,5},v_{2,5,4}][v_{1,2,6},v_{2,6,3}][v_{1,3,4},v_{3,4,5}][v_{1,3,5},v_{3,5,2}][v_{1,3,6},v_{3,6,5}][v_{1,4,5},v_{4,5,6}][v_{1,4,6},v_{4,6,3}][v_{1,5,6},v_{5,6,2}]+&\\
& [v_{1,2,3},v_{2,3,4}] [v_{1,2,4},v_{2,4,6}] [v_{1,2,5},v_{2,5,3}][v_{1,2,6},v_{2,6,3}][v_{1,3,4},v_{3,4,6}][v_{1,3,5},v_{3,5,4}][v_{1,3,6},v_{3,6,5}][v_{1,4,5},v_{4,5,2}][v_{1,4,6},v_{4,6,5}][v_{1,5,6},v_{5,6,2}]+&\\
& [v_{1,2,3},v_{2,3,6}] [v_{1,2,4},v_{2,4,3}] [v_{1,2,5},v_{2,5,3}][v_{1,2,6},v_{2,6,4}][v_{1,3,4},v_{3,4,5}][v_{1,3,5},v_{3,5,6}][v_{1,3,6},v_{3,6,4}][v_{1,4,5},v_{4,5,2}][v_{1,4,6},v_{4,6,5}][v_{1,5,6},v_{5,6,2}]+&\\
& [v_{1,2,3},v_{2,3,5}] [v_{1,2,4},v_{2,4,6}] [v_{1,2,5},v_{2,5,6}][v_{1,2,6},v_{2,6,3}][v_{1,3,4},v_{3,4,2}][v_{1,3,5},v_{3,5,4}][v_{1,3,6},v_{3,6,5}][v_{1,4,5},v_{4,5,2}][v_{1,4,6},v_{4,6,3}][v_{1,5,6},v_{5,6,4}]+&\\
& [v_{1,2,3},v_{2,3,6}] [v_{1,2,4},v_{2,4,6}] [v_{1,2,5},v_{2,5,3}][v_{1,2,6},v_{2,6,5}][v_{1,3,4},v_{3,4,2}][v_{1,3,5},v_{3,5,4}][v_{1,3,6},v_{3,6,4}][v_{1,4,5},v_{4,5,2}][v_{1,4,6},v_{4,6,5}][v_{1,5,6},v_{5,6,3}]+&\\
& [v_{1,2,3},v_{2,3,6}] [v_{1,2,4},v_{2,4,5}] [v_{1,2,5},v_{2,5,3}][v_{1,2,6},v_{2,6,5}][v_{1,3,4},v_{3,4,2}][v_{1,3,5},v_{3,5,6}][v_{1,3,6},v_{3,6,4}][v_{1,4,5},v_{4,5,3}][v_{1,4,6},v_{4,6,2}][v_{1,5,6},v_{5,6,4}]+&\\
& [v_{1,2,3},v_{2,3,5}] [v_{1,2,4},v_{2,4,5}] [v_{1,2,5},v_{2,5,6}][v_{1,2,6},v_{2,6,3}][v_{1,3,4},v_{3,4,2}][v_{1,3,5},v_{3,5,4}][v_{1,3,6},v_{3,6,4}][v_{1,4,5},v_{4,5,6}][v_{1,4,6},v_{4,6,2}][v_{1,5,6},v_{5,6,3}]+&\\
& [v_{1,2,3},v_{2,3,5}] [v_{1,2,4},v_{2,4,3}] [v_{1,2,5},v_{2,5,6}][v_{1,2,6},v_{2,6,3}][v_{1,3,4},v_{3,4,5}][v_{1,3,5},v_{3,5,6}][v_{1,3,6},v_{3,6,4}][v_{1,4,5},v_{4,5,2}][v_{1,4,6},v_{4,6,2}][v_{1,5,6},v_{5,6,4}]+&\\
& [v_{1,2,3},v_{2,3,6}] [v_{1,2,4},v_{2,4,3}] [v_{1,2,5},v_{2,5,3}][v_{1,2,6},v_{2,6,5}][v_{1,3,4},v_{3,4,6}][v_{1,3,5},v_{3,5,4}][v_{1,3,6},v_{3,6,5}][v_{1,4,5},v_{4,5,2}][v_{1,4,6},v_{4,6,2}][v_{1,5,6},v_{5,6,4}]+&\\
& [v_{1,2,3},v_{2,3,6}] [v_{1,2,4},v_{2,4,5}] [v_{1,2,5},v_{2,5,6}][v_{1,2,6},v_{2,6,4}][v_{1,3,4},v_{3,4,2}][v_{1,3,5},v_{3,5,2}][v_{1,3,6},v_{3,6,4}][v_{1,4,5},v_{4,5,3}][v_{1,4,6},v_{4,6,5}][v_{1,5,6},v_{5,6,3}]+&\\
& [v_{1,2,3},v_{2,3,6}] [v_{1,2,4},v_{2,4,6}] [v_{1,2,5},v_{2,5,4}][v_{1,2,6},v_{2,6,5}][v_{1,3,4},v_{3,4,2}][v_{1,3,5},v_{3,5,2}][v_{1,3,6},v_{3,6,5}][v_{1,4,5},v_{4,5,3}][v_{1,4,6},v_{4,6,3}][v_{1,5,6},v_{5,6,4}]+&\\
& [v_{1,2,3},v_{2,3,5}] [v_{1,2,4},v_{2,4,6}] [v_{1,2,5},v_{2,5,4}][v_{1,2,6},v_{2,6,5}][v_{1,3,4},v_{3,4,2}][v_{1,3,5},v_{3,5,4}][v_{1,3,6},v_{3,6,2}][v_{1,4,5},v_{4,5,6}][v_{1,4,6},v_{4,6,3}][v_{1,5,6},v_{5,6,3}]+&\\
& [v_{1,2,3},v_{2,3,5}] [v_{1,2,4},v_{2,4,5}] [v_{1,2,5},v_{2,5,6}][v_{1,2,6},v_{2,6,4}][v_{1,3,4},v_{3,4,2}][v_{1,3,5},v_{3,5,6}][v_{1,3,6},v_{3,6,2}][v_{1,4,5},v_{4,5,3}][v_{1,4,6},v_{4,6,3}][v_{1,5,6},v_{5,6,4}]+&\\
& [v_{1,2,3},v_{2,3,4}] [v_{1,2,4},v_{2,4,5}] [v_{1,2,5},v_{2,5,6}][v_{1,2,6},v_{2,6,4}][v_{1,3,4},v_{3,4,5}][v_{1,3,5},v_{3,5,2}][v_{1,3,6},v_{3,6,2}][v_{1,4,5},v_{4,5,6}][v_{1,4,6},v_{4,6,3}][v_{1,5,6},v_{5,6,3}]+&\\
& [v_{1,2,3},v_{2,3,4}] [v_{1,2,4},v_{2,4,6}] [v_{1,2,5},v_{2,5,4}][v_{1,2,6},v_{2,6,5}][v_{1,3,4},v_{3,4,6}][v_{1,3,5},v_{3,5,2}][v_{1,3,6},v_{3,6,2}][v_{1,4,5},v_{4,5,3}][v_{1,4,6},v_{4,6,5}][v_{1,5,6},v_{5,6,3}]-&\\
\end{eqnarray*}
\begin{eqnarray*}
& [v_{1,2,3},v_{2,3,4}] [v_{1,2,4},v_{2,4,6}] [v_{1,2,5},v_{2,5,4}][v_{1,2,6},v_{2,6,3}][v_{1,3,4},v_{3,4,6}][v_{1,3,5},v_{3,5,2}][v_{1,3,6},v_{3,6,5}][v_{1,4,5},v_{4,5,3}][v_{1,4,6},v_{4,6,5}][v_{1,5,6},v_{5,6,2}]-&\\
& [v_{1,2,3},v_{2,3,6}] [v_{1,2,4},v_{2,4,3}] [v_{1,2,5},v_{2,5,3}][v_{1,2,6},v_{2,6,4}][v_{1,3,4},v_{3,4,6}][v_{1,3,5},v_{3,5,4}][v_{1,3,6},v_{3,6,5}][v_{1,4,5},v_{4,5,2}][v_{1,4,6},v_{4,6,5}][v_{1,5,6},v_{5,6,2}]-&\\
& [v_{1,2,3},v_{2,3,5}] [v_{1,2,4},v_{2,4,5}] [v_{1,2,5},v_{2,5,6}][v_{1,2,6},v_{2,6,3}][v_{1,3,4},v_{3,4,2}][v_{1,3,5},v_{3,5,6}][v_{1,3,6},v_{3,6,4}][v_{1,4,5},v_{4,5,3}][v_{1,4,6},v_{4,6,2}][v_{1,5,6},v_{5,6,4}]-&\\
& [v_{1,2,3},v_{2,3,6}] [v_{1,2,4},v_{2,4,3}] [v_{1,2,5},v_{2,5,3}][v_{1,2,6},v_{2,6,5}][v_{1,3,4},v_{3,4,5}][v_{1,3,5},v_{3,5,6}][v_{1,3,6},v_{3,6,4}][v_{1,4,5},v_{4,5,2}][v_{1,4,6},v_{4,6,2}][v_{1,5,6},v_{5,6,4}]-&\\
& [v_{1,2,3},v_{2,3,5}] [v_{1,2,4},v_{2,4,5}] [v_{1,2,5},v_{2,5,6}][v_{1,2,6},v_{2,6,4}][v_{1,3,4},v_{3,4,2}][v_{1,3,5},v_{3,5,4}][v_{1,3,6},v_{3,6,2}][v_{1,4,5},v_{4,5,6}][v_{1,4,6},v_{4,6,3}][v_{1,5,6},v_{5,6,3}]-&\\
& [v_{1,2,3},v_{2,3,4}] [v_{1,2,4},v_{2,4,6}] [v_{1,2,5},v_{2,5,4}][v_{1,2,6},v_{2,6,5}][v_{1,3,4},v_{3,4,5}][v_{1,3,5},v_{3,5,2}][v_{1,3,6},v_{3,6,2}][v_{1,4,5},v_{4,5,6}][v_{1,4,6},v_{4,6,3}][v_{1,5,6},v_{5,6,3}]-&\\
& [v_{1,2,3},v_{2,3,4}] [v_{1,2,4},v_{2,4,5}] [v_{1,2,5},v_{2,5,3}][v_{1,2,6},v_{2,6,4}][v_{1,3,4},v_{3,4,5}][v_{1,3,5},v_{3,5,6}][v_{1,3,6},v_{3,6,2}][v_{1,4,5},v_{4,5,6}][v_{1,4,6},v_{4,6,3}][v_{1,5,6},v_{5,6,2}]-&\\
& [v_{1,2,3},v_{2,3,5}] [v_{1,2,4},v_{2,4,3}] [v_{1,2,5},v_{2,5,4}][v_{1,2,6},v_{2,6,3}][v_{1,3,4},v_{3,4,5}][v_{1,3,5},v_{3,5,6}][v_{1,3,6},v_{3,6,4}][v_{1,4,5},v_{4,5,6}][v_{1,4,6},v_{4,6,2}][v_{1,5,6},v_{5,6,2}]-&\\
& [v_{1,2,3},v_{2,3,6}] [v_{1,2,4},v_{2,4,6}] [v_{1,2,5},v_{2,5,3}][v_{1,2,6},v_{2,6,5}][v_{1,3,4},v_{3,4,2}][v_{1,3,5},v_{3,5,4}][v_{1,3,6},v_{3,6,5}][v_{1,4,5},v_{4,5,2}][v_{1,4,6},v_{4,6,3}][v_{1,5,6},v_{5,6,4}]-&\\
& [v_{1,2,3},v_{2,3,5}] [v_{1,2,4},v_{2,4,3}] [v_{1,2,5},v_{2,5,6}][v_{1,2,6},v_{2,6,3}][v_{1,3,4},v_{3,4,6}][v_{1,3,5},v_{3,5,4}][v_{1,3,6},v_{3,6,5}][v_{1,4,5},v_{4,5,2}][v_{1,4,6},v_{4,6,2}][v_{1,5,6},v_{5,6,4}]-&\\
& [v_{1,2,3},v_{2,3,6}] [v_{1,2,4},v_{2,4,6}] [v_{1,2,5},v_{2,5,4}][v_{1,2,6},v_{2,6,5}][v_{1,3,4},v_{3,4,2}][v_{1,3,5},v_{3,5,2}][v_{1,3,6},v_{3,6,4}][v_{1,4,5},v_{4,5,3}][v_{1,4,6},v_{4,6,5}][v_{1,5,6},v_{5,6,3}]-&\\
& [v_{1,2,3},v_{2,3,4}] [v_{1,2,4},v_{2,4,5}] [v_{1,2,5},v_{2,5,6}][v_{1,2,6},v_{2,6,4}][v_{1,3,4},v_{3,4,6}][v_{1,3,5},v_{3,5,2}][v_{1,3,6},v_{3,6,2}][v_{1,4,5},v_{4,5,3}][v_{1,4,6},v_{4,6,5}][v_{1,5,6},v_{5,6,3}]-&\\
& [v_{1,2,3},v_{2,3,5}] [v_{1,2,4},v_{2,4,3}] [v_{1,2,5},v_{2,5,4}][v_{1,2,6},v_{2,6,5}][v_{1,3,4},v_{3,4,6}][v_{1,3,5},v_{3,5,4}][v_{1,3,6},v_{3,6,2}][v_{1,4,5},v_{4,5,6}][v_{1,4,6},v_{4,6,2}][v_{1,5,6},v_{5,6,3}]-&\\
& [v_{1,2,3},v_{2,3,4}] [v_{1,2,4},v_{2,4,5}] [v_{1,2,5},v_{2,5,3}][v_{1,2,6},v_{2,6,3}][v_{1,3,4},v_{3,4,6}][v_{1,3,5},v_{3,5,4}][v_{1,3,6},v_{3,6,5}][v_{1,4,5},v_{4,5,6}][v_{1,4,6},v_{4,6,2}][v_{1,5,6},v_{5,6,2}]-&\\
& [v_{1,2,3},v_{2,3,6}] [v_{1,2,4},v_{2,4,3}] [v_{1,2,5},v_{2,5,6}][v_{1,2,6},v_{2,6,4}][v_{1,3,4},v_{3,4,5}][v_{1,3,5},v_{3,5,2}][v_{1,3,6},v_{3,6,4}][v_{1,4,5},v_{4,5,2}][v_{1,4,6},v_{4,6,5}][v_{1,5,6},v_{5,6,3}]-&\\
& [v_{1,2,3},v_{2,3,4}] [v_{1,2,4},v_{2,4,6}] [v_{1,2,5},v_{2,5,3}][v_{1,2,6},v_{2,6,3}][v_{1,3,4},v_{3,4,5}][v_{1,3,5},v_{3,5,6}][v_{1,3,6},v_{3,6,4}][v_{1,4,5},v_{4,5,2}][v_{1,4,6},v_{4,6,5}][v_{1,5,6},v_{5,6,2}]-&\\
& [v_{1,2,3},v_{2,3,6}] [v_{1,2,4},v_{2,4,5}] [v_{1,2,5},v_{2,5,6}][v_{1,2,6},v_{2,6,4}][v_{1,3,4},v_{3,4,2}][v_{1,3,5},v_{3,5,2}][v_{1,3,6},v_{3,6,5}][v_{1,4,5},v_{4,5,3}][v_{1,4,6},v_{4,6,3}][v_{1,5,6},v_{5,6,4}]-&\\
& [v_{1,2,3},v_{2,3,5}] [v_{1,2,4},v_{2,4,6}] [v_{1,2,5},v_{2,5,4}][v_{1,2,6},v_{2,6,5}][v_{1,3,4},v_{3,4,2}][v_{1,3,5},v_{3,5,6}][v_{1,3,6},v_{3,6,2}][v_{1,4,5},v_{4,5,3}][v_{1,4,6},v_{4,6,3}][v_{1,5,6},v_{5,6,4}]-&\\
& [v_{1,2,3},v_{2,3,4}] [v_{1,2,4},v_{2,4,5}] [v_{1,2,5},v_{2,5,3}][v_{1,2,6},v_{2,6,5}][v_{1,3,4},v_{3,4,6}][v_{1,3,5},v_{3,5,6}][v_{1,3,6},v_{3,6,2}][v_{1,4,5},v_{4,5,3}][v_{1,4,6},v_{4,6,2}][v_{1,5,6},v_{5,6,4}]-&\\
& [v_{1,2,3},v_{2,3,5}] [v_{1,2,4},v_{2,4,3}] [v_{1,2,5},v_{2,5,4}][v_{1,2,6},v_{2,6,4}][v_{1,3,4},v_{3,4,6}][v_{1,3,5},v_{3,5,6}][v_{1,3,6},v_{3,6,2}][v_{1,4,5},v_{4,5,3}][v_{1,4,6},v_{4,6,5}][v_{1,5,6},v_{5,6,2}]-&\\
 & [v_{1,2,3},v_{2,3,4}] [v_{1,2,4},v_{2,4,6}] [v_{1,2,5},v_{2,5,6}][v_{1,2,6},v_{2,6,3}][v_{1,3,4},v_{3,4,5}][v_{1,3,5},v_{3,5,2}][v_{1,3,6},v_{3,6,5}][v_{1,4,5},v_{4,5,2}][v_{1,4,6},v_{4,6,3}][v_{1,5,6},v_{5,6,4}]-&\\
& [v_{1,2,3},v_{2,3,6}] [v_{1,2,4},v_{2,4,3}] [v_{1,2,5},v_{2,5,4}][v_{1,2,6},v_{2,6,4}][v_{1,3,4},v_{3,4,5}][v_{1,3,5},v_{3,5,2}][v_{1,3,6},v_{3,6,5}][v_{1,4,5},v_{4,5,6}][v_{1,4,6},v_{4,6,3}][v_{1,5,6},v_{5,6,2}]-&\\
& [v_{1,2,3},v_{2,3,5}] [v_{1,2,4},v_{2,4,6}] [v_{1,2,5},v_{2,5,6}][v_{1,2,6},v_{2,6,3}][v_{1,3,4},v_{3,4,2}][v_{1,3,5},v_{3,5,4}][v_{1,3,6},v_{3,6,4}][v_{1,4,5},v_{4,5,2}][v_{1,4,6},v_{4,6,5}][v_{1,5,6},v_{5,6,3}]-&\\
& [v_{1,2,3},v_{2,3,6}] [v_{1,2,4},v_{2,4,5}] [v_{1,2,5},v_{2,5,3}][v_{1,2,6},v_{2,6,5}][v_{1,3,4},v_{3,4,2}][v_{1,3,5},v_{3,5,4}][v_{1,3,6},v_{3,6,4}][v_{1,4,5},v_{4,5,6}][v_{1,4,6},v_{4,6,2}][v_{1,5,6},v_{5,6,3}]+&\\
\end{eqnarray*}
\begin{eqnarray*}
& [v_{1,2,3},v_{2,3,5}] [v_{1,2,4},v_{2,4,6}] [v_{1,2,5},v_{2,5,3}][v_{1,2,6},v_{2,6,3}][v_{1,3,4},v_{3,4,2}][v_{1,3,5},v_{3,5,6}][v_{1,3,6},v_{3,6,4}][v_{1,4,5},v_{4,5,3}][v_{1,4,6},v_{4,6,5}][v_{1,5,6},v_{5,6,2}]+&\\
& [v_{1,2,3},v_{2,3,6}] [v_{1,2,4},v_{2,4,5}] [v_{1,2,5},v_{2,5,3}][v_{1,2,6},v_{2,6,4}][v_{1,3,4},v_{3,4,2}][v_{1,3,5},v_{3,5,4}][v_{1,3,6},v_{3,6,5}][v_{1,4,5},v_{4,5,6}][v_{1,4,6},v_{4,6,3}][v_{1,5,6},v_{5,6,2}]+&\\
& [v_{1,2,3},v_{2,3,4}] [v_{1,2,4},v_{2,4,5}] [v_{1,2,5},v_{2,5,6}][v_{1,2,6},v_{2,6,3}][v_{1,3,4},v_{3,4,6}][v_{1,3,5},v_{3,5,2}][v_{1,3,6},v_{3,6,5}][v_{1,4,5},v_{4,5,3}][v_{1,4,6},v_{4,6,2}][v_{1,5,6},v_{5,6,4}]+&\\
& [v_{1,2,3},v_{2,3,6}] [v_{1,2,4},v_{2,4,3}] [v_{1,2,5},v_{2,5,4}][v_{1,2,6},v_{2,6,5}][v_{1,3,4},v_{3,4,5}][v_{1,3,5},v_{3,5,2}][v_{1,3,6},v_{3,6,4}][v_{1,4,5},v_{4,5,6}][v_{1,4,6},v_{4,6,2}][v_{1,5,6},v_{5,6,3}]+&\\
& [v_{1,2,3},v_{2,3,5}] [v_{1,2,4},v_{2,4,3}] [v_{1,2,5},v_{2,5,6}][v_{1,2,6},v_{2,6,4}][v_{1,3,4},v_{3,4,6}][v_{1,3,5},v_{3,5,4}][v_{1,3,6},v_{3,6,2}][v_{1,4,5},v_{4,5,2}][v_{1,4,6},v_{4,6,5}][v_{1,5,6},v_{5,6,3}]+&\\
& [v_{1,2,3},v_{2,3,4}] [v_{1,2,4},v_{2,4,6}] [v_{1,2,5},v_{2,5,3}][v_{1,2,6},v_{2,6,5}][v_{1,3,4},v_{3,4,5}][v_{1,3,5},v_{3,5,6}][v_{1,3,6},v_{3,6,2}][v_{1,4,5},v_{4,5,2}][v_{1,4,6},v_{4,6,3}][v_{1,5,6},v_{5,6,4}]-&\\
& [v_{1,2,3},v_{2,3,5}] [v_{1,2,4},v_{2,4,6}] [v_{1,2,5},v_{2,5,4}][v_{1,2,6},v_{2,6,3}][v_{1,3,4},v_{3,4,2}][v_{1,3,5},v_{3,5,4}][v_{1,3,6},v_{3,6,5}][v_{1,4,5},v_{4,5,6}][v_{1,4,6},v_{4,6,3}][v_{1,5,6},v_{5,6,2}]-&\\
& [v_{1,2,3},v_{2,3,6}] [v_{1,2,4},v_{2,4,5}] [v_{1,2,5},v_{2,5,3}][v_{1,2,6},v_{2,6,4}][v_{1,3,4},v_{3,4,2}][v_{1,3,5},v_{3,5,6}][v_{1,3,6},v_{3,6,4}][v_{1,4,5},v_{4,5,3}][v_{1,4,6},v_{4,6,5}][v_{1,5,6},v_{5,6,2}]-&\\
& [v_{1,2,3},v_{2,3,4}] [v_{1,2,4},v_{2,4,5}] [v_{1,2,5},v_{2,5,6}][v_{1,2,6},v_{2,6,3}][v_{1,3,4},v_{3,4,5}][v_{1,3,5},v_{3,5,2}][v_{1,3,6},v_{3,6,4}][v_{1,4,5},v_{4,5,6}][v_{1,4,6},v_{4,6,2}][v_{1,5,6},v_{5,6,3}]-&\\
& [v_{1,2,3},v_{2,3,6}] [v_{1,2,4},v_{2,4,3}] [v_{1,2,5},v_{2,5,4}][v_{1,2,6},v_{2,6,5}][v_{1,3,4},v_{3,4,6}][v_{1,3,5},v_{3,5,2}][v_{1,3,6},v_{3,6,5}][v_{1,4,5},v_{4,5,3}][v_{1,4,6},v_{4,6,2}][v_{1,5,6},v_{5,6,4}]-&\\
& [v_{1,2,3},v_{2,3,5}] [v_{1,2,4},v_{2,4,3}] [v_{1,2,5},v_{2,5,6}][v_{1,2,6},v_{2,6,4}][v_{1,3,4},v_{3,4,5}][v_{1,3,5},v_{3,5,6}][v_{1,3,6},v_{3,6,2}][v_{1,4,5},v_{4,5,2}][v_{1,4,6},v_{4,6,3}][v_{1,5,6},v_{5,6,4}]-&\\
& [v_{1,2,3},v_{2,3,4}] [v_{1,2,4},v_{2,4,6}] [v_{1,2,5},v_{2,5,3}][v_{1,2,6},v_{2,6,5}][v_{1,3,4},v_{3,4,6}][v_{1,3,5},v_{3,5,4}][v_{1,3,6},v_{3,6,2}][v_{1,4,5},v_{4,5,2}][v_{1,4,6},v_{4,6,5}][v_{1,5,6},v_{5,6,3}]\; \; \,\\
\end{eqnarray*}

\end{proposition}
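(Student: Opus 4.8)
Write $F\big(\otimes_{1\le i<j<k\le 6}(v_{i,j,k})\big)$ for the expression on the right-hand side. Each of its monomials is a product of ten bracket factors $[v_a,v_b]$ in which each of the twenty vectors $v_{i,j,k}$ occurs exactly once; in fact the ten ``left entries'' are always the ten faces of $K_6^3$ through the vertex $1$ and the ten ``right entries'' are always the ten faces avoiding the vertex $1$. Thus each monomial equals $\prod_{a\ni 1}[v_a,v_{\phi(a)}]$ for a bijection $\phi$ from the faces through $1$ to the faces avoiding $1$, and $F=\sum_{\phi}\eta(\phi)\prod_{a\ni 1}[v_a,v_{\phi(a)}]$, where $\eta(\phi)\in\{-1,0,1\}$ records the coefficient with which $\phi$ appears (only relatively few of the $10!$ bijections occur, each with coefficient $\pm1$). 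In particular $F$ is multilinear in the $v_{i,j,k}$, hence descends to a linear functional on $V_2^{\otimes 20}$; and since $[Tu,Tv]=\det(T)\,[u,v]$ for every $T\in GL_2(k)$, this functional is $SL_2(k)$-invariant. By Theorem \ref{thdet} it is then enough to establish two points: (a) that $F$ has the universality property, i.e. $F\big(\otimes(v_{i,j,k})\big)=0$ whenever $v_{x,y,z}=v_{x,y,t}=v_{x,z,t}=v_{y,z,t}$ for some $1\le x<y<z<t\le 6$; and (b) that $F$ agrees with $det^{S^3}$ on one tuple on which the latter is nonzero, which simultaneously rules out $F\equiv 0$ and forces the proportionality constant to be $1$.

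Part (b) is a single finite evaluation. Evaluate both sides at $\omega_{\mathcal{P}^{(1)}}$, where $\mathcal{P}^{(1)}=(\mathcal{H}_1^{(1)},\mathcal{H}_2^{(1)})$ is the partition of Figure \ref{figex1}, i.e. at the tuple with $v_{i,j,k}=e_1$ if $\{i,j,k\}\in E(\mathcal{H}_1^{(1)})$ and $v_{i,j,k}=e_2$ otherwise. On the left-hand side every monomial $M^{S^3}_{\mathcal{P}}$ with $\mathcal{P}\ne\mathcal{P}^{(1)}$ vanishes on this tuple, so $det^{S^3}$ evaluates to $\varepsilon^{S^3}(\mathcal{P}^{(1)})\cdot 1=1$ by Lemma \ref{act}(ii); on the right-hand side each bracket becomes $0$ (if its two entries coincide) or $\pm1$, and I would simply check that the surviving signed monomials sum to $1$. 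This pins the constant, and with it the whole identity once (a) is known.

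The substance is part (a). By multilinearity, and by Proposition \ref{proprel} together with the Remark following it, it suffices to show that $F$ annihilates every generator of $\mathcal{E}^{S^3}_{V_2}[6]$: that is, for each quadruple $1\le x<y<z<t\le 6$, after filling the sixteen positions outside $(x,y,z),(x,y,t),(x,z,t),(y,z,t)$ with arbitrary basis vectors, $F$ must vanish on the tensor obtained by inserting any one of relations \ref{rel1}, \ref{rel2}, \ref{rel3} (in the case $d=2$) into those four positions. Expanding $F$ as $\sum_\phi\eta(\phi)\prod_{a\ni 1}[e_a,e_{\phi(a)}]$ on such tensors, a bijection $\phi$ contributes nothing unless each pair $\{a,\phi(a)\}$ joins an $e_1$-position to an $e_2$-position; call such $\phi$ \emph{admissible}. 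The plan is to match the admissible bijections in fixed-point-free, sign-reversing pairs by reshuffling which of the four faces of $\{x,y,z,t\}$ is ``paired across the vertex $1$'', the precise rule depending on whether $1\in\{x,y,z,t\}$ (three of the four faces pass through $1$, one avoids it) or $1\notin\{x,y,z,t\}$ (all four avoid $1$), and then to verify that $\eta$ indeed changes sign under this pairing. Checking that the signs $\eta(\phi)$ displayed in the formula are genuinely compatible with this pairing for all $\binom{6}{4}$ quadruples and all insertions is the heart of the argument and the main obstacle; it is essentially a projective incidence statement in the spirit of \cite{sv}. In practice one may bypass the combinatorics altogether and prove (a) and (b) simultaneously by expanding each bracket as $[v_a,v_b]=\alpha_a\beta_b-\beta_a\alpha_b$, collecting the resulting multilinear polynomial in the $\alpha_{i,j,k},\beta_{i,j,k}$, and matching it coefficient by coefficient against $\sum_{\mathcal{P}}\varepsilon^{S^3}(\mathcal{P})\,M^{S^3}_{\mathcal{P}}$ — a long but entirely routine computation.
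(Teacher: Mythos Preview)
Your overall strategy---use Theorem~\ref{thdet} to reduce the identity to (a) the vanishing property on every tetrahedron and (b) a single normalization---is exactly the paper's, and your part~(b) is the same check $B(\omega_{\mathcal{P}^{(1)}})=1$. The gap is in the execution of~(a): you acknowledge that verifying the sign-reversing pairing ``is the heart of the argument and the main obstacle'' without carrying it out, and your fallback of expanding all brackets and matching coefficients is just the brute-force linear-algebra computation already used for Lemma~\ref{act}.

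The paper closes this gap with two observations you miss. First, the right-hand side $B$ is invariant under the subgroup $G\cong S_5\subset S_6$ fixing the vertex~$1$ (this is visible from your own structural remark: the left bracket entries are exactly the faces through~$1$). Hence the $\binom{6}{4}=15$ quadruples reduce to two orbit representatives, $(x,y,z,t)=(1,2,3,4)$ and $(2,3,4,5)$. Second, in the first case every one of the $60$ terms contains a bracket factor whose two arguments both lie among $v_{1,2,3},v_{1,2,4},v_{1,3,4},v_{2,3,4}$ (because the face of the tetrahedron not through~$1$ always appears as some $v_{\phi(a)}$), so each term is literally zero---no pairing needed. Only the second case requires work, and there the paper simply exhibits the explicit fixed-point-free matching of the $60$ terms into $30$ cancelling pairs. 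Your detour through Proposition~\ref{proprel} and basis-vector insertions is unnecessary: working with a general common vector $v=v_{2,3,4}=v_{2,3,5}=v_{2,4,5}=v_{3,4,5}$ keeps the brackets intact and makes the pairwise cancellation transparent.
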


\begin{proof}
We denote by $B(\otimes_{1\leq i<j<k\leq 6} (v_{i,j,k}))$ the right hand side of the above equality. In order to prove $B=det^{S^3}$ one can use the universality property of the $det^{S^3}$ map. It is easy to check that $B(\omega_{\mathcal{P}^{(1)}})=1$.

Next, we want to show that if $\otimes_{1\leq i<j<k\leq 6}(v_{i,j,k})\in V_2^{\otimes 20}$ such there exist $1\leq x< y<z<t\leq 6$ with the property that $v_{x,y,z}=v_{x,y,t}=v_{x,z,t}=v_{y,z,t}$ then $B(\otimes_{1\leq i<j<k\leq 6}(v_{i,j,k}))=0$. 

First notice that $B$ is invariant under the action of the subgroup $G\subseteq S_6$ that is generated by $(2,3)$, $(3,4)$, $(4,5)$ and $(5,6)$. Because of this fact it is enough to check the universality property for $(x,y,z,t)=(1,2,3,4)$, and for $(x,y,z,t)=(2,3,4,5)$. 

Case I: If $(x,y,z,t)=(1,2,3,4)$ then $v_{1,2,3}=v_{1,2,4}=v_{1,3,4}=v_{2,3,4}$. It is easy to see that all the terms in $B(\otimes_{1\leq i<j<k\leq 6}(v_{i,j,k}))$ are equal to $0$, for example the first term is equal to zero because $[v_{1,2,3},v_{2,3,4}]=0$. 

Case II: If $(x,y,z,t)=(2,3,4,5)$ then $v_{2,3,4}=v_{2,3,5}=v_{2,4,5}=v_{3,4,5}$. Let $v=v_{2,3,4}=v_{2,3,5}=v_{2,4,5}=v_{3,4,5}$. In this situations all the terms  will cancel in pairs. For example, if we look to the first term and thirty-seventh term we have the following expressions, respectively:
\[[v_{1,2,3},v] [v_{1,2,4},v] [v_{1,2,5},v][v_{1,2,6},v_{2,6,5}][v_{1,3,4},v_{3,4,6}][v_{1,3,5},v][v_{1,3,6},v_{3,6,2}][v_{1,4,5},v_{4,5,6}][v_{1,4,6},v_{4,6,2}][v_{1,5,6},v_{5,6,3}]\]
and
\[-[v_{1,2,3},v] [v_{1,2,4},v] [v_{1,2,5},v][v_{1,2,6},v_{2,6,5}][v_{1,3,4},v_{3,4,6}][v_{1,3,5},v][v_{1,3,6},v_{3,6,2}][v_{1,4,5},v_{4,5,6}][v_{1,4,6},v_{4,6,2}][v_{1,5,6},v_{5,6,3}]\]
So, we get these terms to sum up to zero. The rest of the matching pairs are given in the following table:
\begin{center}
\begin{tabular}{|c|c|}
 \hline
 Term&Matching Term\\
 \hline
 1&37\\
 \hline
 2&43\\
 \hline
 3&31\\
 \hline
 4&44\\
 \hline
 5&32\\
 \hline
 6&38\\
 \hline
 7&47\\
 \hline
 8&41\\
 \hline
 9&26\\
 \hline
 10&55\\
 \hline
 11&25\\
 \hline
 12&56\\
 \hline
 13&45\\
 \hline
 14&35\\
 \hline
 15&28\\
 \hline
 \end{tabular}
 \quad\quad\quad
 \begin{tabular}{|c|c|}
 \hline
 Term&Matching Term\\
 \hline
 16&57\\
 \hline
 17&27\\
 \hline
 18&58\\
 \hline
 19&39\\
 \hline
 20&33\\
 \hline
 21&30\\
 \hline
 22&59\\
 \hline
 23&29\\
 \hline
 24&60\\
 \hline
 25&11\\
 \hline
 26&9\\
 \hline
 27&17\\
 \hline
 28&15\\
 \hline
 29&23\\
 \hline
 30&21\\
 \hline
 \end{tabular}
 \quad\quad\quad
 \begin{tabular}{|c|c|}
 \hline
 Term&Matching Term\\
 \hline
 31&3\\
 \hline
 32&5\\
 \hline
 33&20\\
 \hline
 34&51\\
 \hline
 35&14\\
 \hline
 36&53\\
 \hline
 37&1\\
 \hline
 38&6\\
 \hline
 39&19\\
 \hline
 40&49\\
 \hline
 41&8\\
 \hline
 42&54\\
 \hline
 43&2\\
 \hline
 44&4\\
 \hline
 45&13\\
 \hline
 \end{tabular}
 \quad\quad\quad
 \begin{tabular}{|c|c|}
 \hline
 Term&Matching Term\\
 \hline
 46&50\\
 \hline
 47&7\\
 \hline
 48&52\\
 \hline
 49&40\\
 \hline
 50&46\\
 \hline
 51&34\\
 \hline
 52&48\\
 \hline
 53&36\\
 \hline
 54&42\\
 \hline
 55&10\\
 \hline
 56&12\\
 \hline
 57&16\\
 \hline
 58&18\\
 \hline
 59&22\\
 \hline
 60&24\\
 \hline
\end{tabular}
\end{center}

One can see that the pairs of terms have opposite signs. So, all the terms will cancel in pairs, which proves our statement. 

\end{proof}

\end{landscape}

\section{Equivalence Classes of $2$-partitions under the $S_6\times S_2$ Action}

As discussed earlier in the paper there are \num{184756}  homogeneous $2$-partitions of the hypergraph $K_6^3$. Using MATLAB, one can show that \num{13644} of them are  non-trivial (see Lemma \ref{trivpar}). We also know that there is a action of the group $S_6\times S_2$ on $\mathcal{P}_2^{h,nt}(K_6^3)$. In this section we present the $20$ equivalence classes of $\mathcal{P}_2^{h,nt}(K_6^3)$ under the action of $S_6\times S_2$ and the value of $\varepsilon^{S^3}$ on each element in $\mathcal{P}_2^{h,nt}(K_6^3)$. All results in this section were obtained using MATLAB. 

A summary of this information is presented in the following table, where $\mathcal{P}^{(i)}$ for $1\leq i\leq20$ are representatives of the $20$ equivalence classes:
\begin{center}
 \begin{tabular}{|c|c|c|}
  \hline
  Partition&$\epsilon^{S^3}(\mathcal{P}^{(i)})$&Orbit Size\\
  \hline
    $\mathcal{P}^{(1)}$&1&1440\\
    \hline
    $\mathcal{P}^{(2)}$&-1&240\\
    \hline
    $\mathcal{P}^{(3)}$&-1&1440\\
    \hline
    $\mathcal{P}^{(4)}$&1&360\\
    \hline
    $\mathcal{P}^{(5)}$&-1&1440\\
    \hline
    $\mathcal{P}^{(6)}$&1&1440\\
    \hline
    $\mathcal{P}^{(7)}$&-1&720\\
    \hline
    $\mathcal{P}^{(8)}$&-1&1440\\
    \hline
    $\mathcal{P}^{(9)}$&1&1440\\
    \hline
    $\mathcal{P}^{(10)}$&1&360\\
    \hline
    \end{tabular}
    \quad\quad\quad\quad
    \begin{tabular}{|c|c|c|}
    \hline
    Partition&$\epsilon^{S^3}(\mathcal{P}^{(i)})$&Orbit Size\\
    \hline
    $\mathcal{P}^{(11)}$&-1&720\\
    \hline
    $\mathcal{P}^{(12)}$&1&720\\
    \hline
    $\mathcal{P}^{(13)}$&1&360\\
    \hline
    $\mathcal{P}^{(14)}$&-1&360\\
    \hline
    $\mathcal{P}^{(15)}$&-1&72\\
    \hline
    $\mathcal{P}^{(16)}$&1&360\\
    \hline
    $\mathcal{P}^{(17)}$&1&120\\
    \hline
    $\mathcal{P}^{(18)}$&-1&360\\
    \hline
    $\mathcal{P}^{(19)}$&1&240\\
    \hline
    $\mathcal{P}^{(20)}$&-4&12\\
    \hline
 \end{tabular}
\end{center}

In  Figure \ref{RepStart} to Figure \ref{RepEnd} we present $20$ partitions that are representatives  for the equivalences classes under the action of $S_6\times S_2$. To understand these picture better, we consider Figure \ref{RepStart} and give an explicit description of the hyperedges  in $\mathcal{P}^{(1)}$. 
Here we have 
$$\tiny{E(\mathcal{H}^{(1)}_1)=\{\{1,2,3\}, \{1,2,4\},\{1,3,4\},\{1,2,5\}, \{3,4,5\},\{1,5,6\},\{2,4,6\},\{2,5,6\},\{3,4,6\},\{4,5,6\}\}}$$ and  $$\tiny{E(\mathcal{H}^{(1)}_2)=\{\{1,2,6\},\{1,3,5\}, \{1,3,6\},\{1,4,5\},\{1,4,6\}, \{2,3,4\},\{2,3,5\},\{2,3,6\},\{2,4,5\},\{3,5,6\}}\}$$
given by which triangles in the picture are shaded and not shaded, respectively.
The other representatives can be obtained from the corresponding pictures similarly.

\begin{figure}[htbp]
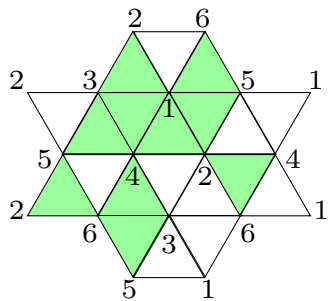

 \centering

 \caption{$\mathcal{P}^{(20)}$ of orbit size 12 and sign $\epsilon^{S^3}(\mathcal{P}^{(20)})=-4$}
 \label{RepEnd}
\end{figure}

\clearpage
\bibliographystyle{amsalpha}

\end{document}